\documentclass[a4paper,10pt]{article}

\usepackage{authblk}
\usepackage{color, graphics}

\usepackage{graphicx}

\usepackage[T1]{fontenc}
\usepackage[latin1]{inputenc}
\usepackage[english]{babel}
\usepackage{geometry}
\geometry{a4paper,top=3cm,bottom=3cm,left=2.5cm,right=2.5cm,%
 heightrounded,bindingoffset=5mm}
\usepackage{indentfirst} 
\title{Periodic solutions to the  Cahn-Hilliard  \\ equation in the plane}
\date{\today}
\author{Andrea Malchiodi\thanks{Scuola Normale Superiore, Piazza dei Cavalieri 7, 56126 Pisa. e-mail: andrea.malchiodi@sns.it}, Rainer Mandel\thanks{KIT, Institut für Analysis, Englerstraße 2, 76131 Karlsruhe, Germany. e-mail: Rainer.Mandel@kit.edu}, Matteo Rizzi\thanks{Instituto de Matemáticas, UNAM,
Área de la Investigación Científica, Circuito exterior, Ciudad Universitaria, 04510, México. e-mail: mrizzi@im.unam.mx}, }


\usepackage{amsmath,amsfonts,amssymb,amsthm}
\usepackage{cases}
\usepackage{latexsym}

\newcommand{\N}{\mathbb{N}}

\newcommand{\R}{\mathbb{R}}

\newcommand{\tr}{\text{tr}}

\newcommand{\eps}{\varepsilon}

\newtheorem{theorem}{Theorem}
\newtheorem{proposition}[theorem]{Proposition}
\newtheorem{lemma}[theorem]{Lemma}

\newtheorem{remark}[theorem]{Remark}

\newenvironment{pfn}{\noindent{\em Proof}}{\rule{2mm}{2mm}\medskip}

\begin{document}

\maketitle

\begin{abstract}
In this paper we construct entire solutions to the Cahn-Hilliard equation
$-\Delta(-\Delta u+W^{'}(u))+W^{''}(u)(-\Delta u+W^{'}(u))=0$ 
in the Euclidean plane, where $W(u)$ is the standard double-well potential $\frac{1}{4} (1-u^2)^2$. 
Such solutions have a non-trivial profile that shadows a Willmore planar curve, and converge 
uniformly to $\pm 1$ as $x_2 \to \pm \infty$. These solutions give a counterexample to 
the counterpart of Gibbons'  conjecture for the fourth-order counterpart of the Allen-Cahn equation.  
We also study the $x_2$-derivative of these 
solutions using the special structure of  Willmore's equation. 
\end{abstract}
\textbf{Keywords}:  Cahn-Hilliard equation; Willmore curves; Gibbons conjecture. 

\tableofcontents

\section{Introduction} 

The Cahn-Hilliard equation was introduced in \cite{CH} to model phase separation of binary fluids. 
Typically, in experiments, a mixture of fluids tends to gradually self-arrange into more regular oscillatory patterns, 
with a sharp transition from one component to the other. Applications of this model include complex 
fluids and soft matter, such as polymer science. The goal of this paper is to rigorously 
construct planar solutions  modelling wiggly transient  patterns exhibited by the equation, and 
to relate them to some existing literature concerning the {\em Allen-Cahn equation}, 
a second-order counterpart of the Cahn-Hilliard describing phase separation in alloys. 

Let us begin by recalling some basic features about the Allen-Cahn equation 
\begin{equation}
-\Delta u=u-u^{3},
\label{allen-cahn} 
\end{equation}
introduced in \cite{AC}.   Here $u$ represents, up to an affine transformation, the density of one of the components of an alloy, 
whose energy per unit volume is given by a double-well potential $W$ 
\begin{equation}\label{eq:W}
 W(u) = \frac{1}{4} (1-u^2)^2. 
\end{equation}
Global minimizers (for example taken among functions with a prescribed average) of the integral of $W$ 
consist of the functions attaining only the values $\pm 1$. Since of course this set of functions has no 
structure whatsoever, usually  a regularization of the energy of the following type  is considered 
\begin{eqnarray}\notag
E_{\varepsilon}(u)=\int_{\Omega}\bigg(\frac{\varepsilon}{2}|\nabla u|^{2}+\frac{(1-u^{2})^{2}}{4\varepsilon}\bigg)dx, 
\end{eqnarray}
which penalizes too frequent phase transitions.

It was shown in \cite{MM} that under suitable assumptions  $E_{\varepsilon}$ Gamma-converges as $\eps \to 0$ to the 
perimeter functional and therefore its critical points are expected to have transitions approximating 
surfaces with zero mean curvature. In particular, minimizers for $E_\varepsilon$ 
should produce interfaces that are stable minimal surfaces, see \cite{Mo}, \cite{St} (and also \cite{HT}). The relation between stability of 
solutions to \eqref{allen-cahn} and their monotonicity has been the subject of several investigations, 
see for example \cite{AAC}, \cite{GG}, \cite{Sa}. In particular  a celebrated conjecture 
by E. De Giorgi (\cite{DG}) states that  solutions to \eqref{allen-cahn} that are monotone in some 
direction should depend on one variable only in dimension $n \leq 8$. This restriction on $n$ 
is crucial, since in large dimension there exist stable minimal surfaces that are not planar, see 
\cite{BDG}, and recently some entire solutions modelled on them were constructed in 
\cite{dPKW}.  Further solutions with non-trivial profiles were produced for example in \cite{ADW}, \cite{CT}, \cite{DFP}, 
\cite{DKPW}, \cite{DMP}.

Another related conjecture named as the {\em Gibbons conjecture}, motivated by problems in 
cosmology, asserts that solutions 
to \eqref{allen-cahn} such that 
 $$
   u(x',x_n) \to \pm 1 \quad \hbox{ as } x_n \to \pm \infty \qquad \hbox{ uniformly for } x' \in \R^{n-1}, 
 $$
should also be one-dimensional. This conjecture was indeed fully proved in all dimensions, see 
\cite{BBG}, \cite{BHM}, \cite{Fa}, \cite{GG}.

\

We turn next to the Cahn-Hilliard equation
\begin{equation}
-\Delta(-\Delta u+W^{'}(u))+W^{''}(u)(-\Delta u+W^{'}(u))=0. 
\label{Cahn-Hilliard}
\end{equation}
%
%
Similarly to \eqref{allen-cahn}, also this equation is variational: introducing a scaling 
parameter $\eps > 0$,  its Euler-Lagrange functional is 
given by 
$$
 \mathcal{W}_{\varepsilon}(u)=  \frac{1}{2\varepsilon}\int_{\Omega}\big(\varepsilon\Delta u-\frac{W^{'}(u)}{\varepsilon}\big)^{2}dx. 
$$
Notice that when the integrand vanishes identically $u$ solves a scaled version of 
\eqref{allen-cahn}. As for $E_\eps$, also $\mathcal{W}_{\varepsilon}$ has a geometric 
interpretation as $\eps \to 0$. 
Although the characterization of Gamma-limit is not as complete as for the Allen-Cahn 
equation, some partial results are known about convergence to (a multiple of) the {\em Willmore energy} 
of the limit interface, i.e. the integral of the mean curvature squared 
\begin{eqnarray}\notag
\mathcal{W}_0(u)=
\int_{\partial E\cap\Omega}H_{\partial E}^{2}(y)d\mathcal{H}^{N-1}. 
\end{eqnarray}
In \cite{BP} G. Bellettini and M. Paolini proved the $\Gamma-\limsup$ inequality for smooth Willmore hypersurfaces, while the $\Gamma-\liminf$ inequality has been proved in dimension $N=2,3$ by M. Röger and R. Schätzle in \cite{RS}, and, independently, in dimension $N=2$, by Y. Nagase and Y. Tonegawa in \cite{NT}. It is an open problem to 
study in higher dimension, as well as to understand for which class of sets the Gamma-limit might exist. 
\\

Apart from the relation to the Cahn-Hilliard energy, the Willmore functional appears as 
bending energy of plates and membranes in mechanics and in biology, and it also enters in general 
relativity as the {\em Hawking mass} of a portion of space-time. This energy has also interest 
in geometry, since it is  invariant under M\"obius transformations.  Critical surfaces of 
$\mathcal{W}$ are called \textit{Willmore hypersurfaces}, and they are known to 
exist for any genus, see \cite{BK}. The Euler equation  is
\begin{eqnarray}\notag
-\Delta_{\Sigma}H=\frac{1}{2}H^{3}-2HK. 
\label{willmoreeq}
\end{eqnarray}
Interesting Willmore surfaces are {\em Clifford tori} (and their M\"obius transformations), that can be obtained 
by rotating around the $z$-axis a circle of radius $1$ and with center at distance $\sqrt{2}$ from the axis. 
Due to a recent result in \cite{MN}, establishing the so-called {\em Willmore conjecture}, this torus minimizes the 
Willmore energy among all surfaces of positive genus. In \cite{Ri}, up to a small Lagrange multiplier, solutions of 
\eqref{Cahn-Hilliard} in $\R^3$ were found with interfaces approaching a Clifford torus,  converging to $-1$ 
in its interior and to $+1$ on its exterior.  

Here we will show existence of solutions to \eqref{Cahn-Hilliard} in the plane with an  
interface periodic in $x_1$, shadowing a $T$-periodic (in the arc-length parameter)  Willmore curve $\gamma_T$, whose profile is given in the picture below. 
\begin{figure}[h]\label{fig:graph} 
\begin{center}
 \includegraphics[angle=0,width=12.0cm]{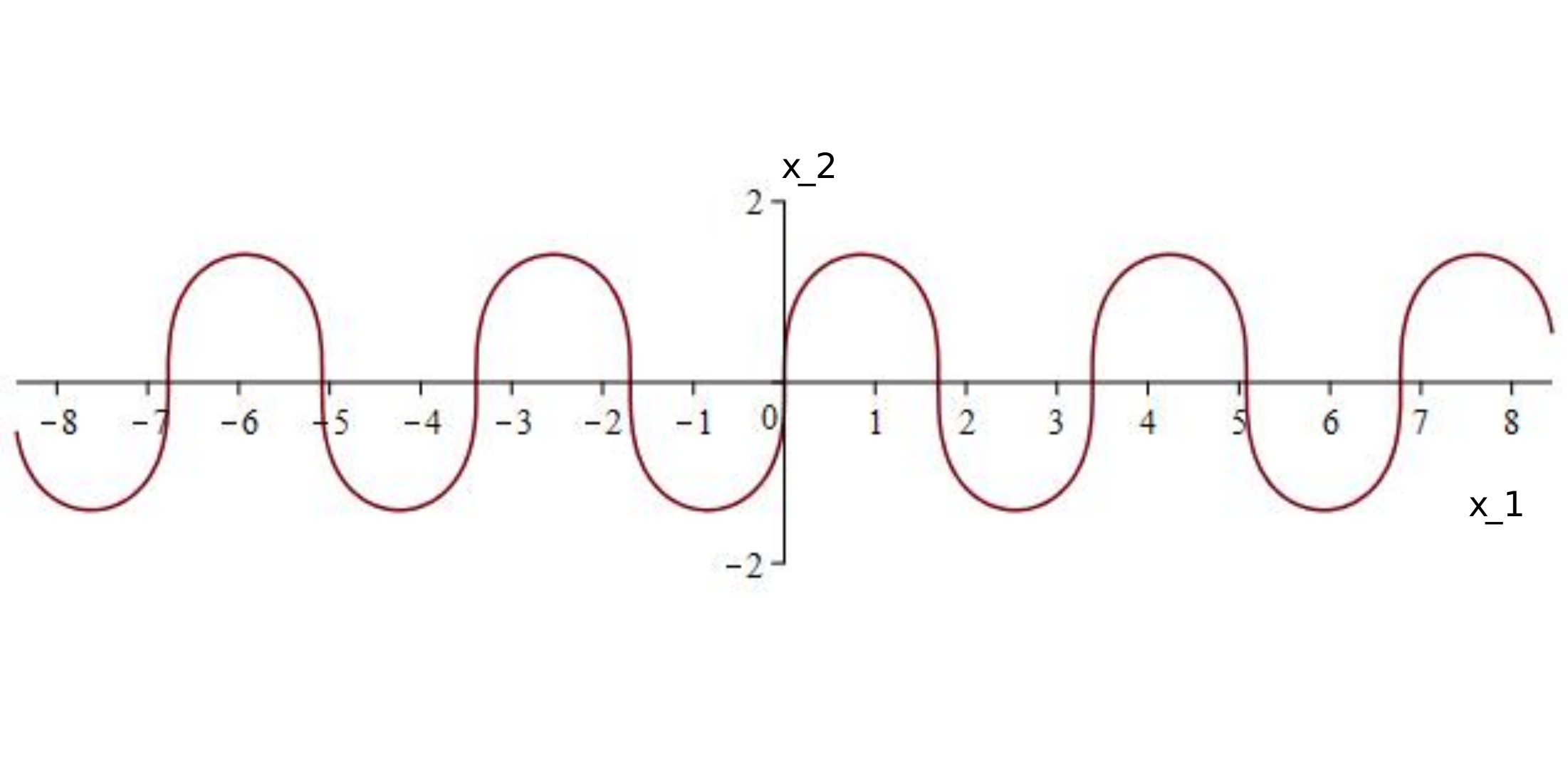} 
\end{center} 
 \caption{The Willmore curve $\gamma_T$}
\end{figure}
Notice that, by the vanishing of Gaussian curvature of cylindrical surfaces,  for one-dimensional curves the Willmore equation reduces to an ODE for the planar curvature, namely 
$$
 k'' = - \frac{1}{2} k^3. 
$$
This equation can be  explicitly solved using special functions, and then 
integrated to produce the above Willmore curves $\gamma_T$. Indeed, every non-affine complete planar Willmore 
curve coincides, up to an affine transformation with the curve $\gamma_T$, see \cite{Ma}.

Apart from producing a first non-compact profile of this type for the equation, our aim is to 
explore the relation between one-dimensionality of solutions and their limit properties. 
In fact, our construction shows that the straightforward counterpart of Gibbons' conjecture 
for \eqref{allen-cahn} is false. Our main result reads as follows. 

\begin{theorem}\label{t:ex}
There exists $T_0>0$ such that, for any $T>T_0$ there exists a $T$-periodic 
planar Willmore curve $\gamma_{T}$ and a solution $u_T$ to 
  \begin{equation*}
 -\Delta(-\Delta u_T+W'(u_T))+W^{''}(u_T)(-\Delta u_T+W'(u_T))=0,\label{CH_mult}
 \end{equation*}
such that  
$$
   u_T(x_1,x_2) \to \pm 1 \quad \hbox{ as } x_2 \to \pm \infty \qquad \hbox{ uniformly for } x_1 \in \R, 
$$
and 
\begin{equation}\label{eq:dist}
dist(\gamma_T,\{x\in\R^2:u_T(x)=0\})< \frac{c}{T}. 
\end{equation}
The function $u_T$ also satisfies the symmetries
$$
 u_T(x_1,x_2) = - u_T(-x_1,-x_2)= - u_T \left( x_1+\frac{L}{2},-x_2\right) \qquad \quad \hbox{ for every } x \in \R^2. 
$$ 
In particular, it is $L$-periodic in the $x_1$ variable, where $L:=(\gamma_T)_1(T)-(\gamma_T)_1(0)>0$, and 
furthermore, there exists a fixed constant $C_0$ such that
\begin{equation}\label{eq:approx-monot}
  \partial_{x_2} u_T(x_1,x_2)  \geq - C_0 \, T^{-3} \qquad \quad \hbox{ for all } (x_1, x_2) \in \R^2 \hbox{ and all } T > T_0.
\end{equation}
\end{theorem}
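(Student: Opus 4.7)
My plan is a Lyapunov--Schmidt reduction around an approximate solution built on the rescaled Willmore profile $\gamma_T$, whose curvature satisfies $\|k_T\|_\infty = O(1/T)$ by the scaling $k_T(s)=\alpha k_0(\alpha s)$ with $\alpha = T_0/T$ applied to the unique (up to rigid motions) non-affine planar Willmore curve $\gamma_0$ from \cite{Ma}.

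First I would introduce Fermi coordinates $(s,z)$ in a tubular neighbourhood of $\gamma_T$, with $s$ the arclength and $z$ the signed normal distance (positive on the upper side), and fix $\gamma_T$ to be invariant under the two discrete symmetries appearing in the statement. As approximate solution I would take
$$
\bar u_T(x) = \eta(z)\,q(z(x)) + (1-\eta(z))\,\operatorname{sgn}(z(x)) + w_1(s,z),
$$
where $q(t)=\tanh(t/\sqrt 2)$ is the heteroclinic of $-q''+W'(q)=0$, $\eta$ is a smooth cutoff, and $w_1$ is a small symmetric correction concentrated near $\gamma_T$, to be chosen so as to cancel subleading inhomogeneities of the error. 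A computation in Fermi coordinates yields, schematically,
$$
\mathcal{C}(\bar u_T) \;=\; -q'(z)\bigl(k_T''(s) + \tfrac12 k_T(s)^3\bigr) + O(T^{-3}),
$$
so the planar Willmore equation $k_T''+k_T^3/2=0$ is precisely the leading solvability condition, and a suitable choice of $w_1$ brings the residual to size $O(T^{-3})$ in an appropriate weighted norm.

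The linearisation $\mathcal{L}_T = D\mathcal{C}(\bar u_T)$ behaves to leading order like $(-\partial_z^2 + W''(q))^2$ with curvature-type perturbations, and its approximate kernel is spanned by the liftings $q'(z)\nu_i(s)$, $i=1,2$, of normal components of $\R^2$-translations, plus a tangential mode associated with $q'(z)\tau(s)$. The two imposed symmetries, together with the $L$-periodicity in $x_1$, eliminate these modes, and I would prove a uniform estimate $\|\phi\|_X \le C\|\mathcal{L}_T\phi\|_Y$ on the corresponding symmetric subspace. A contraction argument then yields $\phi_T = O(T^{-3})$ such that $u_T := \bar u_T + \phi_T$ solves \eqref{Cahn-Hilliard}; the symmetries transfer to $u_T$ by uniqueness within the symmetric class, and the zero-set estimate \eqref{eq:dist} follows from $\|u_T - q(z)\|_\infty$ being small combined with the explicit strict monotonicity of $q$.

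For \eqref{eq:approx-monot} I would use that, to leading order,
$$
\partial_{x_2}\bar u_T(x) \;=\; q'(z)\,\nu_2(s),
$$
where $\nu_2$ is the $x_2$-component of the unit normal to $\gamma_T$. Integrating the ODE $k_T''+k_T^3/2=0$ shows that the tangent angle of $\gamma_T$ stays strictly inside $(-\pi/2,\pi/2)$, hence $\gamma_T$ is a graph over $x_1$ and $\nu_2 \ge \nu_{\min} > 0$; thus the leading term is nonnegative, and the contributions from $w_1$, from the cutoff region outside the tubular neighbourhood, and from $\phi_T$ are all controlled in $C^1$ by $O(T^{-3})$ through interior regularity for the fourth-order linear equation they satisfy, yielding \eqref{eq:approx-monot}.

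The main obstacle I expect is the linear theory: for the \emph{fourth-order} operator $\mathcal{L}_T$ on the whole plane, proving invertibility uniformly in $T$, modulo the three approximate kernel modes and in a function space compatible with $L$-periodicity in $x_1$ and exponential decay in $|z|$, requires a Fourier-mode decomposition along $\gamma_T$ together with a careful analysis of the interplay between the tangential direction and the $z$-profile of $q$. A secondary difficulty is that the sharp exponent $T^{-3}$ in \eqref{eq:approx-monot} forces $w_1$ to be computed with enough accuracy to eliminate any spurious $O(T^{-2})$ contribution to $\|\bar u_T - u_T\|_{C^1}$.
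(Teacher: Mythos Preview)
Your plan has two genuine gaps, one geometric and one analytic, and both are fatal as stated.

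\textbf{The curve is not a graph.} Your monotonicity argument rests on the claim that the tangent angle of $\gamma_T$ stays in $(-\pi/2,\pi/2)$, so that $\nu_2\geq\nu_{\min}>0$. This is false: with the parametrisation in the paper $\gamma'(0)=(0,1)$, so $\gamma_T$ is \emph{vertical} at $s=0$ (and at a lattice of points on the $x_1$-axis), and $\nu_2$ vanishes there. This is exactly the difficulty the paper is built around. At those points $\partial_{x_2}(q(z))\sim q'(z)\,\nu_2(s)$ gives no sign, and the leading contribution to $\partial_{x_2}u_T$ comes instead from the normal tilting $\phi$ of the interface. The paper must expand the approximate solution to fifth order in $\varepsilon=\bar T/T$, identify the main term of $\phi$ as $\varepsilon\,\tfrac{d_\star}{c_\star}\bar\phi$ where $\bar\phi$ solves an explicit linearised Willmore equation $\tilde L_0\bar\phi=\bar g$, and use the special properties $\bar\phi\,k\geq 0$ and $\bar\phi'=0$ at the vertical points (Proposition~\ref{p:inv-rhs}) to get the lower bound $-C_0T^{-3}$. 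Without this, you cannot reach \eqref{eq:approx-monot}.

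\textbf{The approximate kernel is infinite-dimensional.} You assert that the approximate kernel of $\mathcal L_T$ is spanned by three modes $q'(z)\nu_i(s)$, $q'(z)\tau(s)$, removable by symmetry. In fact $\mathcal L^2(f(s)q'(z))=f^{(4)}(s)q'(z)$, and since $s$ runs over a period of length $T$, the low Fourier modes of $f$ give eigenvalues of order $\varepsilon^4$: the operator is not uniformly invertible on the whole symmetric subspace. The correct reduction projects onto the full family $\{q'(z)f(s)\}$, producing a \emph{bifurcation equation} for the tilting function $\phi(s)$ whose principal part is the linearised Willmore operator $\tilde L_0$; one must show $\tilde L_0$ is invertible on the relevant symmetry class (Proposition~\ref{prop_lin_bifo}) and solve $\tilde L_0\phi=\bar g+O(\varepsilon)$ by a fixed point. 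The three geometric Jacobi fields you mention do arise, but only as part of the four-dimensional kernel of $\tilde L_0$ on $\R$, and are ruled out there---not at the level of the PDE in $\R^2$.
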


\


In the literature there are nowadays several constructions of interfaces starting from 
given limit profiles via Lyapunov-Schmidt reductions, see the above-mentioned references. However, 
being the Cahn-Hilliard of fourth order, here one needs a rather careful expansion using also 
smoothing operators. Moreover, to our knowledge, our solution seems to be the first one in the literature with a non-compact 
(and non-trivial) transition profile for \eqref{Cahn-Hilliard}. 

Notice also that the curve $\gamma_T$ 
is vertical at an equally-spaced sequence of points lying on the $x$-axis. Therefore the 
gradient of $u_T$ is nearly horizontal at these points, and it is quite difficult to 
understand the monotonicity (in $x_2$) of the solutions in these regions. Apart from the fact that the 
equation is of fourth-order, and hence rather involved to analyse, we need to expand  formally \eqref{Cahn-Hilliard} up to 
the fifth order  in $\frac{1}{T}$ for proving the estimate \eqref{eq:approx-monot}. 
In practice, we need to find a sufficiently good approximate solution to \eqref{Cahn-Hilliard} 
by adding suitable corrections to a naive transition layer along $\gamma_T$, and then 
by tilting properly the transition profile by a $T$-periodic function $\phi$. This tilting, which is of order $O(T^{-1})$, 
satisfies a {\em linearized Willmore equation} of the form 
$$
  \tilde{L}_0 \, \overline{\phi} = \bar{g}, 
$$
where $\bar{g}(t)$ is an explicit function of the curvature of $\gamma_T$ and its derivatives. 
The special structure of the 
right-hand side in our case and the \emph{special structure} of the Willmore equation 
make it possible to find an explicit solution (again, in terms of special functions) for $\overline{\phi}$, 
depending only on the curvature of $\gamma_T$ and its derivatives.

\begin{remark}
Unfortunately the main order term $\overline{\phi}$ in the perpendicular tilting of the interface with respect to 
$\gamma_T$ is flat at its vertical points,  so we can neither claim a full monotonicity of the solutions, nor 
disprove it. With our analysis and some extra work it should be possible to prove  monotonicity of $u_T$ in suitable portions of the plane, 
however to understand the monotonicity near those special points one would need either 
much more involved expansions and/or different ideas. 
\end{remark}


\

The plan of the paper is the following. In Section \ref{s:pwc} we study planar Willmore curves, and 
analyse some properties, including the spectral ones, of the linearised Willmore equation. In Section 
\ref{s:app-sol} we construct approximate solutions, expanding \eqref{Cahn-Hilliard} up to 
the fifth order in $T^{-1}$, in order to understand the normal tilting of the interface to $\gamma_T$. 
In Section \ref{s:red} we give the outline of the proof of our main result, performing a 
Lyapunov-Schmidt reduction of the problem on the normal tilting $\phi$. 
Sections \ref{s:tech} and the appendix are devoted to the proofs of some technical results: 
the former, concerning the reduction technique, while the latter dealing with the 
main order term $\overline{\phi}$ in the expansion of $\phi$.

\begin{center}
{\bf Acknowledgements} 
\end{center}

A.M. is supported by the project {\em Geometric Variational Problems} from Scuola Normale Superiore and 
by MIUR Bando PRIN 2015 2015KB9WPT$_{001}$.  He is also member of GNAMPA as part of INdAM. 
R.M.  would like to thank the Deutsche Forschungsgemeinschaft (DFG, German Research Foundation)
for the financial support via the grant {MA~6290/2-1}. 
R. M. and M. R. would like to thank Scuola Normale Superiore for the kind hospitality during the preparation 
of this manuscript.

\section{Planar Willmore curves}\label{s:pwc}

In this section we collect some material about existence of planar Willmore 
curves, analysing then their spectral properties with respect to the second variation 
of the Willmore energy. 
Recall that the Willmore energy of a curve $\gamma : [0,1] \to \R^2$ is defined as  the 
integral of the curvature squared 
$$
  \int_{\gamma} k(s)^2 ds. 
$$
Extremizing with respect to variations that are compactly supported in $(0,1)$ one finds 
that critical points satisfy the Willmore equation 
\begin{equation} \label{eq:Will}
 k'' = - \frac{1}{2 } k^3. 
\end{equation}

\subsection{Existence of Willmore curves} 

Recall first the definition of the Jacobi cosine function, see for example \cite{BW}. For $m \in (0,1)$ define 
$$
  \sigma(\varphi, m) = \int_0^\varphi \frac{d \theta}{\sqrt{1-m \sin^2 \theta}}, 
$$
and then implicitly the function $\mathrm{cn}$ by 
\begin{equation}
 \mathrm{cn } \, \left( \sigma(\varphi, m) | m \right) = \cos \varphi. 
\end{equation}
Equation \eqref{eq:Will} admits (only) periodic solutions  that, up to 
a   dilation and translation are given by 
\begin{equation}\label{eq:ks}
k(s)  =  \sqrt{2} \, \text{cn} \left(\left.s+\bar{T}/4 \, \right| \, \frac{1}{2} \right). 
\end{equation}
For this choice, the period $\bar{T}$ has the approximate value $\bar{T} \simeq 7.416$. 
Using the conservation of Hamiltonian energy, 
this function satisfies 
\begin{equation} \label{eq:cons}
 (k')^2(s)=-\frac{k^4(s)}{4}+1.
\end{equation}
The above function can be integrated to produce a Willmore curve, by the formula 
(with an abuse of notation, we will always use the same letter both for the curve and for its parametrization) 
\begin{eqnarray*}
  \gamma(s)=  \int_0^s \begin{pmatrix}
    -\sin(\int_0^s k(\tau) d\tau) \\
    \cos(\int_0^s k(\tau) d\tau)
  \end{pmatrix}ds. 
\end{eqnarray*}
Notice that $\gamma$ is parametrized by arc length. If we set
$\gamma_T(s):=\varepsilon^{-1}\gamma(\varepsilon s)$, for $\eps=\bar{T}/T$, then it is still
true that $|\gamma^{'}_T(s)|=1$ for any $s\in\R$. In other words, $\gamma_T$ also
denotes the rescaled curve $\{\varepsilon^{-1}\zeta:\zeta\in\gamma\}$, still parametrized by arc length. 
Our aim is  to construct solutions $u_T$ with a transition layer close to $\gamma_T$, that are odd and periodic in $x_1$ and 
fulfilling the symmetry property
\begin{eqnarray}
  u_T(x_1,x_2)=-u_T(-x_1,-x_2)=-u_T\left(x_1+\frac{L}{2},-x_2\right), & L:=(\gamma_T)_1(T)-(\gamma_T)_1(0).
\label{symm}
\end{eqnarray}
The curvature of $\gamma_T$ is defined by
$$
k_\eps(s):=-\langle\gamma_T^{'}(s),\gamma_T^{''}(s)^\bot \rangle, \qquad w^\bot=(-w_2,w_1), 
$$
and clearly by the arc-length parametrisation one has $\gamma_T^{''}(s)=k_{\varepsilon}(s)\gamma_T^{'}(s)^\bot$. 
In what follows, when the subscript $\eps$ is omitted, it will be assumed to be equal to $1$,  i.e. we will set $\gamma:=\gamma_1$, $k:=k_{1}$, 
etc..

\subsection{The linearized problem}

We discuss next the linearization of the Willmore equation, namely we consider 
the  problem
\begin{equation} 
\tilde{L}_{0} \, \phi=g \qquad \text{in }\R,
\label{bifo_lin}
\end{equation}
where $g:\R\to\R$ is a given ${\bar{T}}-$periodic function. Recall from formula (33) in \cite{LMS} that $\tilde L_0$ is given by
\begin{equation}\label{eq:tL0}
 \tilde L_0 \, \phi =\phi^{(4)} + (\frac{5}{2}k^2\phi^{'})' + (3(k')^2-\frac{1}{2}k^4)\phi = \phi^{(4)} + (\frac{5}{2}k^2\phi^{'})' + (3-\frac{5}{4}k^4)\phi,
\end{equation}
where the conservation law \eqref{eq:cons} has been used.  Given the symmetries of the problem, we are interested 
in right-hand sides $g$ that satisfy the following conditions 
\begin{equation*}
g(s)=-g(-s)=-g(s+{\bar{T}}/2), 
\end{equation*}
hence we define the spaces
\begin{equation}
C^{n,\alpha}_{{\bar{T}}}(\R):=\bigg\{\phi\in C^{n,\alpha}(\R):\phi(s)=-\phi(-s)=-\phi\bigg(s+\frac{{\bar{T}}}{2}\bigg)\bigg\}\label{def_per_spaces},
\end{equation}
where ${\bar{T}}>0$, $n\geq 0$ is an integer, $0<\alpha<1$ and $C^{n,\alpha}(\R)$ is the space of functions $\phi:\R\to\R$ that are $n$ times differentiable and whose $n$-th derivative is H\"{o}lder continuous of exponent $\alpha$. We endow the spaces $C^{n,\alpha}_{{\bar{T}}}(\R)$ with the norms 
$$
||\phi||_{C^{n,\alpha}(\R)}=\sum_{j=0}^{n}||\nabla^{j}\phi||_{L^{\infty}(\R)}+\sup_{s\neq t}\frac{|\phi^{(n)}(s)-\phi^{(n)}(t)|}{|t-s|^{\alpha}}.
$$
Roughly speaking, these spaces consist of functions that respect the symmetries of the curve $\gamma$, in the sense that they are even, periodic with period ${\bar{T}}$, and they change sign after a translation of half a period.
We have then the following result. 

\begin{proposition} \label{prop_lin_bifo}
  Let ${\bar{T}}>0$. Let $g\in C^{0,\alpha}(\R)$ satisfy $g(s)=-g(-s)=-g(s+{\bar{T}}/2)$ for all $s\in\R$. Then there is a
  unique function $\phi\in C^{4,\alpha}(\R)$ that solves equation \eqref{bifo_lin} and satisfies
$$
    \phi(s)=-\phi(-s)=-\phi(s+\bar{T}/2)\;\qquad \forall s\in\R. 
$$
  Moreover, the estimate $||\phi||_{C^{4,\alpha}(\R)}\leq c||g||_{C^{0,\alpha}(\R)}$ holds for some positive
  number $c$ independent of $g$.
\end{proposition}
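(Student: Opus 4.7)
My plan is to establish existence, uniqueness and the a priori estimate all at once by the Fredholm alternative applied to $\tilde{L}_0$ on the symmetric subspace, and then to verify injectivity by hand using the symmetries inherited from the explicit formula for $k$.

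First I would check that $\tilde{L}_0 : C^{4,\alpha}_{\bar{T}}(\R) \to C^{0,\alpha}_{\bar{T}}(\R)$ is a well-defined bounded operator. The crucial point is that the coefficients $\tfrac{5}{2}k^2$ and $3-\tfrac{5}{4}k^4$ are even functions of $s$ that are also $\bar{T}/2$-periodic, because $k$ itself is odd and $\bar{T}/2$-antiperiodic (this is immediate from $k(s)=\sqrt 2\,\mathrm{cn}(s+\bar T/4|1/2)$ and the parity/period relations of $\mathrm{cn}$). Consequently $\tilde{L}_0$ preserves the symmetry constraints defining $C^{n,\alpha}_{\bar{T}}$.

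Next I would establish that $\tilde{L}_0$ is Fredholm of index $0$ from $C^{4,\alpha}_{\bar{T}}$ to $C^{0,\alpha}_{\bar{T}}$. Writing $\tilde{L}_0 = (\partial_s^4+\mu) + R$, the operator $\partial_s^4+\mu$ is an isomorphism for large $\mu>0$ (by a direct Fourier series argument on the torus $\R/\bar T\Z$, which trivially preserves the symmetry class), while $R$ is a linear second-order differential operator whose composition with $(\partial_s^4+\mu)^{-1}$ is compact thanks to the compact embedding $C^{4,\alpha}_{\bar{T}}\hookrightarrow C^{3,\alpha'}_{\bar{T}}$ provided by Arzelà–Ascoli on one period. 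By the Fredholm alternative, proving that $\tilde{L}_0$ is injective on $C^{4,\alpha}_{\bar{T}}$ yields simultaneously existence, uniqueness, and the estimate $\|\phi\|_{C^{4,\alpha}}\le c\|g\|_{C^{0,\alpha}}$.

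Injectivity is the heart of the matter. The $\bar T$-periodic kernel of $\tilde{L}_0$ is finite-dimensional and contains the Jacobi fields generated by the Euclidean symmetries of $\gamma$: the two translation fields $\phi^{\mathrm{tr}}_i(s)=\langle e_i,\nu(s)\rangle$ with $\nu=(\gamma')^\perp$, and the rotation field $\phi^{\mathrm{rot}}(s)=\langle \gamma(s)^\perp,\nu(s)\rangle$. Using $k(-s)=-k(s)$ and $k(s+\bar T/2)=-k(s)$, one finds $\theta(s):=\int_0^sk(\tau)d\tau$ is even with $\theta(s+\bar T/2)=\theta(s)+c$ for a fixed constant $c$, and $\gamma(s)=\int_0^s(-\sin\theta,\cos\theta)\,d\tau$ has definite reflection properties about the $x_2$-axis and about the midpoint of one period. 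A direct computation then shows that each of $\phi^{\mathrm{tr}}_1,\phi^{\mathrm{tr}}_2,\phi^{\mathrm{rot}}$ violates at least one of the two symmetries $\phi(-s)=-\phi(s)$ and $\phi(s+\bar T/2)=-\phi(s)$, so no nonzero linear combination lies in $C^{4,\alpha}_{\bar{T}}$. If the $\bar T$-periodic kernel turns out to be strictly larger than three-dimensional, one rules out the remaining element by the same symmetry test, using a fundamental system of solutions obtained by standard Floquet/ODE reduction.

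The main obstacle is Step 3: one must know enough about the full $\bar T$-periodic kernel of a fourth-order ODE (a priori $4$-dimensional) to certify that \emph{none} of its nonzero elements lies in the twice-symmetric class. The geometric Jacobi fields account for a $3$-dimensional piece, and handling any residual fourth dimension — as well as making the parity computations for $\theta$ and $\gamma$ rigorous — is the delicate part. Everything else (Fredholmness, the a priori estimate, the symmetry preservation of $\tilde L_0$) is essentially routine once the symmetries of $k$ have been recorded.
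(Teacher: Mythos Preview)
Your overall architecture (Fredholm index zero on the symmetric class, then injectivity by ruling out Jacobi fields) is exactly the paper's, and your treatment of Fredholmness via $(\partial_s^4+\mu)^{-1}$ plus a compact remainder is fine. The genuine gap is in the injectivity step, and it is precisely the point you flag as ``the main obstacle''.

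First, your list of Jacobi fields is both incomplete and partly wrong. The rotation field $\phi^{\mathrm{rot}}=\langle\gamma^\perp,\nu\rangle$ involves $\gamma$ itself, and since $\gamma(s+\bar T)=\gamma(s)+(L,0)$ with $L\neq 0$, this field is \emph{not} $\bar T$-periodic; so it does not lie in the periodic kernel at all. More importantly, you are missing a fourth explicit Jacobi field: the Willmore energy is scale-invariant, and the dilation generates $\psi_4=\langle\gamma,\nu\rangle=\gamma_1\gamma_2'-\gamma_2\gamma_1'$. Together with the two translation fields and the rotation field, this gives four linearly independent solutions of $\tilde L_0\phi=0$, hence a full fundamental system of the fourth-order ODE. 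Without this fourth explicit solution your ``Floquet/ODE reduction'' fallback remains a promise rather than an argument.

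Second, the paper organises the injectivity check more cleanly than a direct parity computation. Rather than working on the torus, it observes that the two symmetry constraints force the Navier boundary conditions $\phi(0)=\phi(\bar T)=\phi''(0)=\phi''(\bar T)=0$ on $[0,\bar T]$, and then simply evaluates the $4\times 4$ matrix of boundary data $(\psi_i(0),\psi_i(\bar T),\psi_i''(0),\psi_i''(\bar T))$ for $i=1,\dots,4$, using $k'(0)=k'(\bar T)=-1$ and $\gamma_1(\bar T)\neq 0$. This matrix is visibly nonsingular, so no nontrivial combination satisfies the boundary conditions, and injectivity follows. I would recommend you add the dilation field to your list and adopt this boundary-value check; it replaces the vague Floquet argument by a two-line linear algebra computation.
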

\begin{proof}
By considering extensions by periodicity, it is sufficient to prove unique resolvability of $\tilde L_0 \, \phi=g$ on $[0,{\bar{T}}]$ for $\phi$ in the space
\begin{eqnarray}\notag
C^{4,\alpha}_{{\bar{T}},0}([0,{\bar{T}}]):=\{\phi|_{[0,{\bar{T}}]}:\phi\in C^{4,\alpha}_{{\bar{T}}}(\R)\}\\\notag
=\{\phi\in C^{4,\alpha}([0,{\bar{T}}]):\phi(s)=-\phi({\bar{T}}-s)=-\phi(s+{\bar{T}}/2)\; \hbox{ for } \;  0\leq s\leq {\bar{T}}/2\}.
\end{eqnarray}
We observe that, by construction, any function $\phi\in C^{4,\alpha}_{{\bar{T}},0}([0,{\bar{T}}])$ satisfies 
$\phi^{(j)}(0)=\phi^{(j)}({\bar{T}})$, $0\leq j\leq 4$, hence we can extend $\phi$ to a function in $C^{4,\alpha}_{{\bar{T}}}(\R)$. 

We denote by $(-\Delta)^{-2}h$ the unique solution $\phi$ to 
\begin{equation}\notag
\begin{cases}
\phi^{(4)}=h &\text{on }\quad [0,{\bar{T}}];\\\notag
\phi(0)=\phi({\bar{T}})=\phi^{''}(0)=\phi^{''}({\bar{T}})=0 &\text{(homogeneous Navier boundary conditions),}
\end{cases}
\end{equation}
where $h\in C^{0,\alpha}_{{\bar{T}}}(\R)$ is given. Such a solution $\phi$ is in $C^{4,\alpha}([0,{\bar{T}}])$ and fulfils the estimate
 $$
    ||\phi||_{C^{4,\alpha}(\R)}
    = ||\phi||_{C^{4,\alpha}(0,{\bar{T}})}
    \leq c\, ||h||_{C^{0,\alpha}(0,{\bar{T}})}
    = c\, ||h||_{C^{0,\alpha}(\R)}. 
  $$ 
Since $h$ verifies the symmetries $h(s)=-h({\bar{T}}-s)=-h(s+{\bar{T}}/2)$, for any $0\leq s\leq {\bar{T}}/2$, then also does $\phi$, thus $\phi\in C^{4,\alpha}_{{\bar{T}},0}([0,{\bar{T}}])$.\\

Then $\tilde L_0 \, \phi=g$ is
  equivalent to 
  $$
    \phi + (-\Delta)^{-2} \Big( (\frac{5}{2}k^2\phi ')' + (3-\frac{5}{4}k^4)\phi\Big) = (-\Delta)^{-2}(g).
  $$
  In particular the Fredholm alternative in the space $C^{4,\alpha}_{{\bar{T}},0}([0,{\bar{T}}])$ applies, so  the equation is
  solvable for every $g\in C^{0,\alpha}_{{\bar{T}}}(\R)$ if and only if the homogeneous problem is uniquely solvable.

Exploiting the symmetries of the Willmore equation, if $\nu_{\gamma}$ denotes the 
normal vector to the curve $\gamma$, the following four functions represent 
Jacobi fields for the linearized Willmore equation 
$$
   \psi_1 = \langle (0,-1), \nu_{\gamma} \rangle; \qquad 
   \psi_2 = \langle (1,0), \nu_{\gamma} \rangle; \qquad 
   \psi_3 = \langle (\gamma_2,-\gamma_1), \nu_{\gamma} \rangle ; 
   \qquad \psi_4 = \langle (\gamma_1,\gamma_2), \nu_{\gamma} \rangle. 
$$
Using the fact that $\nu_{\gamma} = (\gamma'_2, - \gamma'_1)$  (here $\gamma_1$ and 
$\gamma_2$ represent the horizontal and vertical components of $\gamma$) 
 one finds that 
 $$
 (\psi_1,\ldots,\psi_4):=(\gamma_1',\gamma_2',\gamma_1\gamma_1'+\gamma_2\gamma_2',
    \gamma_1\gamma_2'-\gamma_2\gamma_1'). 
  $$
We claim that these functions are linearly independent: indeed, using $k'(0)=k'({\bar{T}})=-1$ and $\gamma_1({\bar{T}})\neq 0$ we get
  $$
    \begin{pmatrix}
      \psi_1(0) \\ \psi_1({\bar{T}}) \\ \psi_1^{''}(0) \\\psi_1^{''}({\bar{T}})
    \end{pmatrix}
    = \begin{pmatrix}
      0 \\ 0 \\ 1 \\ 1
    \end{pmatrix},\qquad \qquad 
    \begin{pmatrix}
      \psi_2(0) \\ \psi_2({\bar{T}}) \\ \psi_2^{''}(0) \\\psi_2^{''}({\bar{T}})
    \end{pmatrix}
    = \begin{pmatrix}
      1 \\ 1 \\ 0 \\ 0
    \end{pmatrix}, 
    $$
    $$
    \begin{pmatrix}
      \psi_3(0) \\ \psi_3({\bar{T}}) \\ \psi_3^{''}(0) \\\psi_3^{''}({\bar{T}})
    \end{pmatrix}
    = \begin{pmatrix}
      0 \\ 0 \\ 0 \\ \gamma_1({\bar{T}})
    \end{pmatrix},\qquad \qquad 
    \begin{pmatrix}
      \psi_4(0) \\ \psi_4({\bar{T}}) \\ \psi_4^{''}(0) \\\psi_4^{''}({\bar{T}})
    \end{pmatrix}
    = \begin{pmatrix}
      0 \\ \gamma_1({\bar{T}}) \\ 0 \\ 0
    \end{pmatrix}. 
  $$ 
Being the   homogeneous ODE $\tilde L_0\phi=0$ of fourth order in $\phi$, 
all its $\bar{T}$-periodic solutions are spanned by $\{ \psi_1, \psi_2, \psi_3, \psi_4 \}$. 
From the above formulas one infers that a function $\phi\in C^{4,\alpha}_{{\bar{T}},0}([0,{\bar{T}}])$ that is a linear combination 
$(\psi_1, \psi_2, \psi_3, \psi_4 )$ satisfies homogeneous Navier boundary conditions if and only if  it is trivial. 
  Hence the homogeneous problem  has only the trivial solution, and the equation $\tilde L_0 \, \phi=g$ 
  (with the desired boundary conditions) is uniquely
  solvable in $C^{4,\alpha}_{{\bar{T}},0}([0,{\bar{T}}])$, as claimed. The norm estimate follows from  
  $$
    ||\phi||_{C^{4,\alpha}(\R)}
    = ||\phi||_{C^{4,\alpha}(0,{\bar{T}})}
    \leq c||g||_{C^{0,\alpha}(0,{\bar{T}})}
    = c||g||_{C^{0,\alpha}(\R)}, 
  $$ 
  where the inequality results from higher order Schauder estimates, see e.g. \cite{GrG}. 
  Notice that reducing the problem on $\R$ to a problem on $[0,{\bar{T}}]$ ensures compactness.
\end{proof}

\

We need next  to invert the linearized operator for a specific right-hand side, arising from high-order 
expansion (in $\eps = \frac{\bar{T}}{T}$) of the approximate solutions, see Section \ref{s:app-sol}. We have the following result.

\begin{proposition}\label{p:inv-rhs}
    Let
$$
  \overline{g}(s) = \frac{9}{8} k(s)^5 - 9 k(s) k'(s)^2. 
$$
   Then the equation $\tilde{L}_0 \, \bar{\phi}=\overline{g}$ admits a unique smooth
    solution $\bar\phi\in C^{4,\alpha}_{{\bar{T}}}(\R)$ which additionally  satisfies
    \begin{itemize}
      \item[(i)] $\bar\phi(s) \, k(s) \geq  0$ for all $s\in\R$, 
      \item[(ii)] $\bar\phi'(s)=0$ whenever $k(s)=0$.
    \end{itemize}     
  \end{proposition}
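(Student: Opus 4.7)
The plan is in two stages: first, secure existence, uniqueness and the prescribed symmetries of $\bar\phi$ via Proposition~\ref{prop_lin_bifo}; then, pass to an explicit representation from which (i) and (ii) can be read off.

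For the first stage, I would check that $\bar g$ sits in the symmetry class of Proposition~\ref{prop_lin_bifo}. From the formula $k(s)=\sqrt{2}\,\mathrm{cn}(s+\bar{T}/4\,|\,1/2)$, combined with the classical Jacobi-function identities $\mathrm{cn}(-u|m)=\mathrm{cn}(u|m)$, $\mathrm{cn}(u+2K(m)|m)=-\mathrm{cn}(u|m)$ and the equality $K(1/2)=\bar{T}/4$, one reads off $k(-s)=-k(s)$ and $k(s+\bar{T}/2)=-k(s)$. Differentiating, $k'$ is even in $s$ with $k'(s+\bar{T}/2)=-k'(s)$, so both $k^5$ and $k(k')^2$, and hence $\bar g$, are odd in $s$ and change sign under $s\mapsto s+\bar{T}/2$. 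Proposition~\ref{prop_lin_bifo} then produces a unique $\bar\phi\in C^{4,\alpha}_{\bar{T}}(\R)$ solving $\tilde L_0\bar\phi=\bar g$ with the prescribed symmetries, and smoothness follows by standard bootstrapping in the fourth-order ODE.

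For the second stage, note that using the conservation law $(k')^2=1-k^4/4$ one can rewrite $\bar g(s)=-9k(s)+\tfrac{27}{8}k(s)^5$, so $\bar g$ vanishes exactly where $k$ does, namely at $s\in\{0,\bar{T}/2\}$ modulo $\bar{T}$. The symmetries of $\bar\phi$ yield $\bar\phi(0)=\bar\phi(\bar{T}/2)=0$ and $\bar\phi'(\bar{T}/4)=0$, but they do \emph{not} force $\bar\phi'(0)=\bar\phi'(\bar{T}/2)=0$; this extra information requires an explicit formula. I would build one via variation of parameters based on the four Jacobi fields $\psi_1,\dots,\psi_4$ identified in Proposition~\ref{prop_lin_bifo}, writing
$$
\bar\phi(s)=\sum_{j=1}^{4}\psi_j(s)\int_0^s c_j(t)\,\bar g(t)\,dt+\sum_{j=1}^{4}\mu_j\,\psi_j(s),
$$
where the $c_j$ are determined by the inverse Wronskian and the constants $\mu_j$ are pinned down by the requirement that $\bar\phi$ be odd and $\bar{T}/2$-antiperiodic. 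Property (ii) should then fall out cleanly at $s=0$: the first sum and its derivative at $0$ vanish since $\int_0^0=0$ and $\bar g(0)=0$, while a direct computation gives $\psi_1'(0)=\psi_2'(0)=\psi_4'(0)=0$ (all from $k(0)=0$ and $\gamma(0)=0$), so only the $\mu_3$-term could contribute; the symmetry constraints force $\mu_3=0$ and hence $\bar\phi'(0)=0$. The analogous evaluation at $s=\bar{T}/2$, via antiperiodicity, handles the other zero of $k$.

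The main obstacle is (i). Once (ii) is known, $\bar\phi$ factors as $\bar\phi=k\cdot h$ with $h$ smooth on $\R$, and (i) reduces to showing $h\geq 0$ on $\R$. I expect to verify this by a careful sign analysis of the explicit integral representation above, exploiting the fact that $\bar g/k=-9+\tfrac{27}{8}k^4$ has a controllable sign pattern on quarter-periods, or alternatively by deriving a second-order ODE for $h$ to which a comparison/maximum principle argument applies. This delicate sign check is the genuinely computational piece of the statement and presumably what the appendix is devoted to.
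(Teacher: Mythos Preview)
Your first stage (existence and uniqueness via Proposition~\ref{prop_lin_bifo}) is correct and matches the paper. The second stage diverges from the paper's argument and, as written, has a genuine gap.

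The paper does \emph{not} use variation of parameters against the Jacobi fields $\psi_1,\dots,\psi_4$. Its key step is the ansatz $\bar\phi(s)=\Phi(k(s))$ for an odd function $\Phi$, motivated by the special structure of the Willmore equation. Substituting and using \eqref{eq:Will}, \eqref{eq:cons} turns $\tilde L_0\bar\phi=\bar g$ into a linear ODE for $\Phi(z)$ on $(-\sqrt{2},\sqrt{2})$, solved by a power series $\Phi(z)=\sum_{m\ge 0}\mu_m z^{2m+1}$ with two free parameters $\mu_0,\mu_1$. Requiring $\bar\phi$ to extend evenly across $\pm\bar T/4$ (i.e.\ $\bar\phi',\bar\phi'''\to 0$ as $|k|\to\sqrt{2}$) forces $\mu_0=0$, $\mu_1=\pi^2/(8\Gamma(\tfrac34)^4)$; this is what the appendix computes. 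With this series, (ii) is immediate because $\Phi(z)=O(z^3)$ gives $\bar\phi'(s)=\Phi'(k(s))\,k'(s)=O(k(s)^2 k'(s))$, and (i) is a short term-by-term sign check of the series for $\bar\phi(s)\,k(s)$.

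Your argument for (ii) does not go through. You correctly compute $\bar\phi'(0)=\mu_3$, but then assert that ``the symmetry constraints force $\mu_3=0$''. This is precisely what needs proof. Oddness and $\bar T/2$-antiperiodicity yield $\bar\phi(0)=0$ and $\bar\phi'(\bar T/4)=0$, not $\bar\phi'(0)=0$; they determine the $\mu_j$ uniquely (this is the Fredholm alternative behind Proposition~\ref{prop_lin_bifo}), but give no a priori reason for $\mu_3$ to vanish. To extract $\mu_3=0$ you would have to compute the particular integral $P$ and the $\psi_j$ at enough points to solve the linear system explicitly, or find some further structural identity---neither of which you do. As stated, the step is circular: the symmetries define $\bar\phi$, so ``symmetry determines $\mu_3$'' is just ``$\bar\phi'(0)=\bar\phi'(0)$''.

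For (i) your plan is admittedly speculative, and both proposed mechanisms are problematic here: fourth-order operators do not enjoy a maximum principle in general, and the integral representation built from the oscillatory functions $\psi_j=\gamma_i',\gamma_i\gamma_j'$ carries no visible sign structure. The paper's series in powers of $k(s)$ is exactly what makes the sign of $\bar\phi\,k$ transparent; without an analogue of $\bar\phi=\Phi\circ k$, establishing (i) along your lines would be a substantially different and harder computation.
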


\

\begin{remark}
  The solution $\bar\phi$ can be written  explicitly in terms of hyper-geometric functions, see Chapter 15
  in \cite{AS} or \cite{AA} for the notation we are using and additional properties. Indeed, for every
  $\mu_0,\mu_1\in\R$ a formal solution $\bar{\phi}$ is given by the formula $\bar{\phi}(s) = \Phi(k(s))$, where
  $$
    \Phi(z) := \mu_0z + \mu_1z^3+\frac{37-40\mu_0}{960} z^5+r_1(z)+r_2(z)+r_3(z), 
  $$
  for functions $r_1,r_2,r_3$ given by
$$
    r_1(z) := \frac{\mu_1}{28} z^7 \, \,_2F_1\left( 1,\frac{5}{4}; \frac{11}{4}; \frac{z^4}{4}\right),  \qquad \quad 
    r_2(z) := - \frac{41}{640} z^9 \, \,_2F_1\left(1, \frac{7}{4}; \frac{13}{4}; \frac{z^4}{4}\right), 
    $$
    $$
    r_3(z) :=  \left( -\frac{3}{896}\mu_0+ \frac{465}{7168}\right) z^9 \, \,_3F_2\left(1, \frac{7}{4},\frac{5}{2};
    \frac{11}{4},3; \frac{z^4}{4}\right). 
  $$ 
  This representation, however, does not seem to be helpful when discussing the regularity properties of
  the function $\Phi \circ k$ as we will discuss in the proof below.
\end{remark}

  \begin{proof}
    Since we are looking for an odd solution $\bar\phi\in C^{4,\alpha}_{\bar{T}}(\R)$, 
    motivated by the special features of Willmore's equation we 
    consider the ansatz
    $\bar\phi(s)=\Phi(k(s))$. After some calculations
    it is possible to write $\tilde{L}_0 \, \bar{\phi}=\overline{g}$ as an ODE for $\Phi$, namely
    \begin{align} \label{eq:ODE_for_Phi}
      \begin{aligned}
 	  & \frac{1}{16} \Big( (z^4-4)^2 \Phi^{(4)}(z)+12 z^3(z^4-4)  \Phi^{(3)}(z)+ 2 z^2 (13 z^4 - 28)  \Phi''(z)  \\
  	  &\quad -16 (z^4-2) z \Phi'(z)+(48-20 z^4) \Phi(z)\Big) = -9z + \frac{27}{8}z^5.
  	  \end{aligned}
    \end{align}
    This equation can be solved explicitly
    in terms of a series $\Phi(z)= \sum_{k=0}^\infty \mu_k z^{2k+1}$, where the parameters $\mu_0,\mu_1$ are free and
    $\mu_2,\mu_3,\ldots$ are  determined recursively by the above ODE. Their precise definition are provided in the
    Appendix, see \eqref{eq:defn_cm}. This series has convergence radius
    $\sqrt{2}$ so  it is not clear a priori whether $s\mapsto \bar\phi(s)=\Phi(k(s))$ defines a function 
    of $C^{4,\alpha}_{\bar{T}}(\R)$. In order to ensure this we impose that the solution $\bar\phi$ is even about $-\bar{T}/4$ and $\bar{T}/4$,
    i.e.  we require
	\begin{equation} \label{eq:oddness_barphi}
	  \bar\phi '(s)\to 0,\quad \bar\phi'''(s)\to 0\quad\text{as } |s|\to \bar{T}/4.
	\end{equation}
	Notice that $k(s)$ converges to $\sqrt{2}$ as $|s|\to \frac{T}{4}$. The
	calculations from the Appendix show that \eqref{eq:oddness_barphi} holds if and only if we choose
    \begin{equation} \label{eq:c0c1}
      \mu_0 = 0,\qquad \qquad \qquad \mu_1= \frac{\pi^2}{8\Gamma(\frac{3}{4})^4}.
    \end{equation}
    The corresponding solution is given by
    \begin{align*}
      \bar\phi(s) 
      &=   \frac{3\pi\sqrt{2}}{64\Gamma(\frac{3}{4})^2}\cdot
      \sum_{m=0}^\infty \Big( -  \frac{\Gamma(m+\frac{3}{4})}{2\cdot 4^m\Gamma(m+\frac{9}{4})}  
      k(s)^{4m+5} + \frac{ \Gamma(m+\frac{1}{4})}{4^m\Gamma(m+\frac{7}{4})} k(s)^{4m+3} \Big).
	\end{align*}
    Thanks to \eqref{eq:oddness_barphi} this solution can be reflected evenly about $s=\pm \bar{T}/4$, so 
    we obtain by standard arguments that $\bar\phi\in C^{4,\alpha}_{\bar{T}}(\R)$. From the above formula we find $\bar\phi'(s)= O(k'(s)k(s)^2)\to 0$ as
    $k(s)\to 0$ as well as
    \begin{align*}
      \bar\phi(s)k(s) 
      &=   \frac{3\pi\sqrt{2}k(s)^4}{64\Gamma(\frac{3}{4})^2}\cdot
      \sum_{m=0}^\infty  \Big(
      -\frac{\Gamma(m+\frac{3}{4})}{\Gamma(m+\frac{9}{4})} \cdot \frac{k(s)^2}{2} +
      \frac{\Gamma(m+\frac{1}{4})}{\Gamma(m+\frac{7}{4})}   \Big) \Big(\frac{k(s)^4}{4}\Big)^m\\
      &\geq  \frac{3\pi\sqrt{2}k(s)^4}{64\Gamma(\frac{3}{4})^2}\cdot
      \sum_{m=0}^\infty  \underbrace{\Big( -\frac{\Gamma(m+\frac{3}{4})}{\Gamma(m+\frac{9}{4})} +
      \frac{\Gamma(m+\frac{1}{4})}{\Gamma(m+\frac{7}{4})}   \Big)}_{\geq 0}\Big(\frac{k(s)^4}{4}\Big)^m \geq
      0.
	\end{align*}
	Hence, claim (i) and (ii) are proved and we can conclude.
  \end{proof}

\section{Approximate solutions}\label{s:app-sol}

In this section we introduce an approximate solution of $\eqref{eq:W}$, which we need to expand up to the fifth order in $\eps=\bar{T}/T$. For doing this, we use Fermi coordinates around a perturbation of the curve 
$$
  \gamma_{T}(\cdot) = \frac{1}{\eps} \gamma \left( \eps \, \cdot \right); \qquad \quad \eps = \frac{\bar{T}}{T}, 
$$
($\gamma$ is the Willmore curve constructed in Section \ref{s:pwc}) and we expand both the Laplace operator and Cahn-Hilliard equation. We also need to add suitable corrections to the approximate solution in order to improve its accuracy: these will allow us to 
study in more detail the {\em transition curve} $\{ u_T = 0 \}$ of the solution constructed in Theorem \ref{t:ex}.

\subsection{Fermi coordinates near $\gamma_T$}

As in \cite{Ri}, we want to use Fermi coordinates near a normal perturbation of the dilated periodic curve $\gamma_T$. To this end, we fix $\phi\in C^{4,\alpha}_{\bar{T}}(\R)$ 
(recall \eqref{def_per_spaces}) such that $\|\phi\|_{C^{4,\alpha}(\R)}< 1$ , and for $T$ large (i.e. for $\eps = \tfrac{\bar{T}}{T}$ small) we define the planar map  
\begin{equation}
Z_\eps(s,t) = \gamma_T(s) + (t+\phi(\eps s)) \gamma_T^{'}(s)^\perp, \qquad w^\perp := (-w_2,w_1).
\label{def_Z}
\end{equation}
Using the fact that $\gamma_T:=\eps^{-1}\gamma(\eps s)$ and $|\gamma_T^{'}|=1$ we find
  \begin{align*}
    \det(\partial_s Z_\eps,\partial_t Z_\eps)
    &= \det\Big(\gamma^{'}(\eps s)^\perp, \gamma^{'}(\eps s) + \eps\phi^{'}(\eps s) \gamma^{'}(\eps s)^\perp \pm
    (t+\phi(\eps s))\eps k(\eps s) \gamma^{'}(\eps s)
     \Big) \\
    &= \det(\gamma^{'}(\eps s)^\perp, \gamma^{'}(\eps s)) \cdot \big(1+\eps k(\eps s)(t+\phi(\eps s))\big) \\
    &\geq 1 - \sqrt{2}(|t|+1)\eps \\
    &\geq \frac{1}{4} \qquad \qquad\text{for } |t|< \frac{1}{2\sqrt{2}\eps}. 
  \end{align*}
This shows that in the above region the map  $Z_\eps$ is invertible.  
  Moreover, define 
$$ 
    V_{\varepsilon,\phi} :=\left\{x\in\R^{2}:dist(x,\gamma_{T,\phi})< \frac{1}{4\eps}\right\},
  $$  
and $\gamma_{T,\phi}:=\gamma_T(s)+\phi(\eps s)\gamma_T^{'}(s)^\bot$.  
Since $\phi\in C^{4,\alpha}(\R)$ it follows also that $Z_\eps:\R\times (-\frac{1}{4\eps},\frac{1}{4\eps})\to V_{\eps,\phi}$ is a $C^{4,\alpha}$-diffeomorphism.  
 With an abuse of notation, we will write $u$ for $u(s,t)$. We also 
set
\begin{equation}\label{eq:Veps}
V_{\varepsilon} :=\left\{x\in\R^{2}:dist(x,\gamma_{T})< \frac{1}{4\eps}\right\},
\end{equation}

\subsection{The Laplacian in Fermi coordinates}

We are interested in the expression of the Laplacian in the above coordinates $(s,t)$. First we assume that $\phi=0$, that is we consider the diffeomorphism
$$
\tilde{Z}_\eps:\R\times(-1/4\eps,1/4\eps)\to V_\eps
$$
defined by
\begin{equation}\label{eq:sz}
\tilde{Z}_\eps(s,z):=\gamma_T(s)+z\gamma_T^{'}(s)^\bot. 
\end{equation}
The euclidean metric in these coordinates is
\begin{eqnarray}\notag
g=
\begin{bmatrix}
(1-\eps zk(\eps s))^2 & 0\\
0 & 1
\end{bmatrix},
\end{eqnarray}
with determinant  $\det g=g_{ss}=(1-\eps zk(\eps s))^2$ and  inverse given by 
\begin{eqnarray}\notag
g^{-1}=
\begin{bmatrix}
(1-\eps zk(\eps s))^{-2} & 0\\
0 & 1
\end{bmatrix}.
\end{eqnarray}
From now on, the curvature $k$ and its derivatives will always be evaluated at $\eps s$. Using that $g^{ss}=g_{ss}^{-1}$, the Laplacian with respect to this metric is given by
\begin{eqnarray}\label{eq:lapl} \nonumber
\Delta u=\frac{1}{\sqrt{g_{ss}}}\partial_s(\sqrt{g_{ss}}g^{ss}\partial_s u)+\frac{1}{\sqrt{g_{ss}}}\partial_z(\sqrt{g_{ss}}\partial_z u)\\
=\frac{1}{\sqrt{g_{ss}}}\partial_s(\frac{1}{\sqrt{g_{ss}}}\partial_s u)+\partial^2_z u+\frac{\partial_z g_{ss}}{2g_{ss}}\partial_z u\\ \nonumber
=\frac{\partial^2_s u}{g_{ss}}-\frac{\partial_s g_{ss}}{2g_{ss}^2}\partial_s u+\partial^2_z u+\frac{\partial_z g_{ss}}{2g_{ss}}\partial_z u.
\end{eqnarray}
Now we compute
\begin{eqnarray}\notag
\frac{\partial_z g_{ss}}{2g_{ss}}=-\eps\frac{k}{1-\eps zk}.
\end{eqnarray}
Taylor expanding in $\eps$, we get
\begin{eqnarray}\notag
\frac{k}{1-\eps zk}=k+\eps k^2 z+\eps^2 k^3 z^2+\eps^3 k^4 z^3+\eps^4 k^5 z^4+\eps^5\overline{h}(\eps s,z),
\end{eqnarray}
where the remainder term $\overline{h}(\eps s,z)$ satisfies 
\begin{eqnarray}\notag
\partial^{(i)}_s \overline{h}(\eps s,z)=O(\eps^i z^{i+5}); \qquad \quad i \geq 0. 
\end{eqnarray}
Therefore, using the same notation $\overline{h}(\eps s,z)$ for a remainder term similar to the previous one we also have 
\begin{eqnarray}\notag
\frac{\partial_z g_{ss}}{2g_{ss}}=-\eps k-\eps^2 k^2z-\eps^3 k^3 z^2-\eps^4 k^4z^3-\eps^5 k^5z^4+\eps^6\overline{h}(\eps s,z),
\end{eqnarray}
Now we Taylor-expand in $\eps$ the following quantities 
\begin{eqnarray}\notag
\frac{1}{g_{ss}}=\frac{1}{(1-\eps zk)^2}=1+2\eps zk+3\eps^2z^2 k^2+\eps^3\overline{a}(\eps s,z); \\\notag
-\frac{\partial_s g_{ss}}{2g_{ss}^2}=\frac{\eps^2 zk'}{(1-\eps zk)^3}=\eps^2 zk'(1+3\eps zk+\eps^2\overline{b}(\eps s,z)),
\end{eqnarray}
where the remainders $\overline{a}(\eps s,z)$, $\overline{b}(\eps s,z)$ satisfy 
\begin{eqnarray}\notag
\partial^{(i)}_s \overline{a}(\eps s,z)=O(\eps^i z^{i+3}), \qquad \quad \partial^{(i)}_s \overline{b}(\eps s,z)=O(\eps^i z^{i+2}), 
\quad i \geq 0. 
\end{eqnarray}
In conclusion, the expansion of the Laplacian in the above coordinates $(s,z)$ (see \eqref{eq:sz}) is 
\begin{eqnarray}
\Delta=\partial^2_z+\partial^2_s-\eps k\partial_z-\eps^2 k^2 z\partial_z-\eps^3 k^3 z^2\partial_z-\eps^4 k^4 z^3\partial_z-\eps^5 k^5 z^4\partial_z+\eps^6\overline{h}(\eps s,z)\label{exp_lapl_phi=0}\\\notag
+\eps z(2k\partial^2_s+\eps k'\partial_s)+\eps^2 z^2(3k^2\partial^2_s+3\eps kk'\partial_s)
+\eps^3(\overline{a}(\eps s,z)\partial^2_s +\eps zk'\overline{b}(\eps s,z)\partial_s ),
\end{eqnarray}
where, we recall, $k$ and its derivatives are evaluated at $\eps s$.

Given a function
\begin{eqnarray}\notag
f:\R^2\to\R
\end{eqnarray}
of class $C^{2}$, it is possible to make the change of variables $t:=z-\phi(\eps s)$. In other words, we define
\begin{eqnarray}\notag
\tilde{f}:\R^2\to\R
\end{eqnarray}
by setting $\tilde{f}(s,z):=f(s,z-\phi(\varepsilon s))$. A straightforward computation shows that
\begin{eqnarray}\notag
\partial_z \tilde{f}(s,z)=\partial_t f(s,z-\phi);\\\notag
\partial_s \tilde{f}(s,z)=\partial_s f(y,z-\phi)-\varepsilon\phi^{'}\partial_t f(s,z-\phi);\\\notag
\partial^2_s \tilde{f}(s,z)=\partial^2_s f(y,z-\phi)-2\varepsilon\phi^{'}\partial_{st}f(y,z-\phi);\\\notag
-\varepsilon^{2}\phi^{''}\partial_t f(y,z-\phi)+\varepsilon(\phi^{'})^2\partial^2_t f(y,z-\phi),
\end{eqnarray}
where, we recall, $\phi$ and its derivatives are evaluated at $\varepsilon s$. Hence by \eqref{eq:lapl} the expansion we are interested in, using the 
latter coordinates $(s,t)$,  is given by 
\begin{eqnarray}
\Delta=\partial^{2}_{s}+\partial^{2}_{t}-\varepsilon k\partial_{t}-\eps^{2}(t+\phi)k^2\partial_{t}-\eps^{3}(t+\phi)^2 k^3\partial_{t}-\eps^{4}(t+\phi)^3 k^4\partial_{t}\label{exp_lapl}\\\notag
-\eps^{5}(t+\phi)^4 k^5\partial_{t}-\eps^6\overline{h}\partial_t-\varepsilon^{2}\phi^{''}\partial_{t}-2\varepsilon\phi^{'}\partial_{st}
+\varepsilon^{2}(\phi^{'})^{2}\partial^{2}_{t}\\\notag
+\varepsilon(t+\phi)\{2k\partial^{2}_{s}+\varepsilon k^{'}\partial_{s}-\varepsilon^{2}(2k\phi^{''}
+k^{'}\phi^{'})\partial_t-4\varepsilon k\phi^{'}\partial_{st}+2\varepsilon^{2}k(\phi^{'})^{2}\partial^2_t\}\\\notag
+\varepsilon^{2}(t+\phi)^{2}\{3k^2\partial^{2}_{s}+3\varepsilon kk'\partial_{s}-\varepsilon^{2}(3k^2\phi^{''}
+3kk'\phi^{'})\partial_t-6\varepsilon k^2 \phi^{'}\partial_{st}+3\varepsilon^{2}k^2(\phi^{'})^{2}\partial^2_t\}\\\notag
+\eps^3\{\overline{a}(\eps s,t)(\partial^2_s-2\eps\phi^{'}\partial_{st}+\eps^2\phi^{''}\partial_t +\eps(\phi^{'})^2\partial^2_t)+\eps (t+\phi)k'\overline{b}(\eps s,t)(\partial_s-\eps\phi^{'}\partial_t)\},
\end{eqnarray}
see also formulas $(28)$ and $(29)$ in \cite{Ri}.

\subsection{Construction of the approximate solution}\label{ss:constr}

We proceed by fixing a function $\phi\in C^{4,\alpha}_{\bar{T}}(\R)$ such that $||\phi||_{C^{4,\alpha}(\R)}<1$ (recall \eqref{def_per_spaces}) and  constructing an approximate solution $v_{\varepsilon,\phi}$ whose nodal set is a perturbation of the initial
curve $\gamma_T$, tilting it transversally in the normal direction by $\phi$ (scaling properly its argument). 
This approximate solution is constructed in such a way that $v_{\varepsilon,\phi}\to\pm 1$ when the distance from $\gamma_T$ tends to infinity from different sides. More precisely, we observe that $\gamma_T$ divides $\R^{2}$ into two open unbounded regions: an \emph{upper part} $\gamma_T^{+}$ and a \emph{lower part} $\gamma_T^{-}$.

We set
$$
\mathbb{H}(x):=
\begin{cases}
1 &\text{if $x\in\gamma_T^{+}$}\\
0 &\text{if $x\in\gamma_T$}\\
-1 &\text{if $x\in\gamma_T^{-}$}
\end{cases}
$$
and introduce a $C^{\infty}$ cutoff function
$\zeta:\R\to\R$  such that
$$
\zeta(t)=
\begin{cases}
1 &\text{for $t<1$}\\
0 &\text{for $t>2$}.
\end{cases}
\label{def_zeta}
$$
For any $\varepsilon>0$ and for any integer $l>0$, recalling the definition of $V_\eps$ in \eqref{eq:Veps}, we set
\begin{equation}\label{eq:chil}
\chi_{l}(x):=
\begin{cases}
\zeta(|t|-\frac{1}{8\varepsilon}-l)&\text{ if }x=Z_{\varepsilon}(s,t)\in V_{\varepsilon},\\\notag
0&\text{ if }x\in\R^{2}\backslash V_{\varepsilon}. 
\end{cases}
\end{equation}
We will start by constructing an approximate solution $\hat{v}_{\varepsilon,\phi}$ in $V_{\varepsilon}$, and 
then {\em globalize} it using the above cut-off functions, introducing 
\begin{equation}
v_{\varepsilon,\phi}(x)=\chi_{5}(x)\hat{v}_{\varepsilon,\phi}(x)+(1-\chi_{5}(x))\mathbb{H}(x),  \qquad x\in\R^{2}.
\label{defv}
\end{equation}
Since $v_{\varepsilon,\phi}$ coincides with $\hat{v}_{\varepsilon,\phi}$ near $\gamma_T$, it is convenient to define $\hat{v}_{\varepsilon,\phi}$ through the Fermi coordinates $(s,t)$, see \eqref{def_Z}. In order to do so, we first define a function $\tilde{v}_{\varepsilon,\phi}(s,t)$ on $\R^{2}$, in such a way that its zero set is close to $\{ t = 0 \}$, then we set
$$
\hat{v}_{\varepsilon,\phi}(x):=
\begin{cases}
\tilde{v}_{\varepsilon,\phi}(Z_{\varepsilon}^{-1}(x)) &\text{if $x\in V_{\varepsilon}$}\\
0 &\text{if $x\in\R^{2}\backslash V_{\varepsilon}$.} 
\end{cases}
$$
In order to have a global definition of $v_{\varepsilon,\phi}$, the value of $\hat{v}_{\varepsilon,\phi}$ far 
from the curve is not relevant, since it is multiplied by a cut-off function that is identically zero there. We
stress that, by the symmetries of $\phi$, $v_{\varepsilon,\phi}$ satisfies the symmetry properties (\ref{symm}) 
if $\hat{v}_{\varepsilon,\phi}$ does.\\

A natural first guess for an approximate solution is $\tilde{v}_{\eps,\phi}(s,t):=v_0(t)$, where $v_{0}$ is the unique solution to the problem
$$
\begin{cases}
-v_{0}^{''}=v_0-v_0^3 &\text{on $\R$}\\
v_{0}(0)=0\\
v_{0}\to\pm 1 &\text{as $t\to\pm\infty$,}
\end{cases}
$$
with explicit formula $v_0(t):=\tanh(t/\sqrt{2})$. In this way, the nodal set would be exactly the image of the curve 
\begin{eqnarray}\notag
\gamma_{T,\phi}:=\{\gamma_T(s)+\phi(\eps s)\gamma^{'}_T(s)^\bot\}.
\end{eqnarray}
However, this  simple  approximation is not suitable for our purposes, and we need to correct it in two aspects. 
First, in order to recognize the linearized Willmore equation after the Lyapunov-Schmidt reduction, we have 
to improve the accuracy of the solution by adding further correction terms. Secondly, formally expanding in 
$\eps$ the Cahn-Hilliard equation on the above function  we will produce error terms involving derivatives of $\phi$ up to the order six multiplied by high powers of $\eps$; hence we will get an equation that, in principle, we would not be able to solve in $\phi$. In order to avoid this problem, we use a family $\{R_\theta\}_{\theta\geq 1}$ of smoothing operators on periodic functions on $[0,\bar{T}]$, introduced by Alinhac and Gérard (see \cite{AG}), namely operators satisfying
\begin{equation} 
||R_\theta\phi||_{C^{k,\alpha}([0,\bar{T}])}\leq c\, ||\phi||_{C^{k',\alpha '}([0,\bar{T}])}\qquad \text{if }k+\alpha\leq k'+\alpha ';\label{smooth1}
\end{equation}
\begin{equation}
||R_\theta\phi||_{C^{k,\alpha}([0,\bar{T}])}\leq c\, \theta^{k+\alpha-k'-\alpha '}||\phi||_{C^{k',\alpha '}([0,\bar{T}])}\qquad \text{if }k+\alpha\geq k'+\alpha^{'}; \label{smooth2}
\end{equation}
\begin{equation}
||\phi-R_\theta\phi||_{C^{k,\alpha}([0,\bar{T}])}\leq c\, \theta^{k+\alpha-k'-\alpha '}||\phi||_{C^{k',\alpha '}([0,\bar{T}])}\qquad \text{if }k+\alpha\leq k'+\alpha'
\label{smooth3}.
\end{equation}
Such operators are obtained by, roughly, truncating the Fourier modes higher than $\theta$. 
It is possible to find further details in \cite{Dan}, where the periodic  case is specifically treated. This latter issue 
is common to interface constructions, see for example \cite{PR} and \cite{Ri}, and treated in a similar manner.   \\

Now we set $\phi_\star:=R_{1/\eps}\phi$ and we consider the change of variables $t:=z-\phi_\star(\eps s)$, which corresponds to replacing $\phi$ by $\phi_\star$ in the expansion of the Laplacian (\ref{exp_lapl}). The Cahn-Hilliard equation evaluated on the above 
function $v_0(t)$ is formally of order $\eps^2$. To correct the terms of order $\eps^2$ we can consider
\begin{eqnarray}\notag
v_{0}(t)+v_{1,\eps,\phi}(s,t)+v_{2,\eps,\phi}(s,t) 
\end{eqnarray}
as an approximate solution near the curve, as in \cite{Ri}, Subsection \ref{ss:5.1}. Here, $v_{1,\eps,\phi}$ is given by
\begin{equation}\label{eq:v1}
v_{1,\eps,\phi}(s,t):=v_0(t+\phi_\star(\eps s)-\phi(\eps s))-v_0(t),
\end{equation}
and
\begin{equation}\label{eq:v2}
v_{2,\eps,\phi}(s,t):=\varepsilon^{2}(-k^2(\varepsilon s)+\varepsilon L\phi_\star(\varepsilon s))\eta(t)+\eps^2\big(\phi_\star^{'}(\varepsilon s)\big)^2\tilde{\eta}(t),
\end{equation}
where
\begin{equation}\label{eq:L}
L\phi:=-2k\phi^{''}-2k^{3}\phi,
\end{equation}
and
\begin{eqnarray}\notag
\eta(t)=-v_{0}^{'}(t)\int_{0}^{t}(v_{0}^{'}(s))^{-2}ds\int_{-\infty}^{s}\frac{\tau (v_{0}^{'}(\tau))^{2}}{2}d\tau.
\end{eqnarray}
The function $\eta$ is exponentially decaying, odd in $t$ and solves 
\begin{eqnarray}\notag
L_{\star}\eta(t):=-\eta^{''}(t)+W^{''}(v_{0}(t))\eta(t)=\frac{1}{2}tv_{0}^{'}(t)\\\notag
\int_{\R}\eta(t)v_{0}^{'}(t)dt=0.
\end{eqnarray}
Here $L_\star$ represents the second variation of the one-dimensional Allen-Cahn energy evaluated at $v_0$. Similarly, $\tilde{\eta}(t):=-tv_0^{'}(t)/2$ solves
\begin{eqnarray}\notag
L_{\star}\tilde{\eta}(t)=v_{0}^{''}(t)\\\notag
\int_{\R}\tilde{\eta}(t)v_{0}^{'}(t)dt=0.
\end{eqnarray}
We note that, in particular, $L_\star^2 \eta=-v_0^{''}$. 

In order to understand the $x_2$-dependence of $u_T$, see \eqref{eq:approx-monot}, we are interested in 
determining the main-order term of $\phi$, which will turn out to be of order $\eps$. 
We then take $\phi$ of the form
\begin{eqnarray}\notag
\phi:=\eps (h+\psi),
\end{eqnarray}
where $h$ is an explicit multiple of $\overline{\phi}$ (see Propositions \ref{p:inv-rhs} and \ref{prop_bifo}) and $\psi$ is some fixed small $C^{4,\alpha}_{\bar{T}}(\R)$-function, in the sense that $||\psi||_{C^{4,\alpha}(\R)}<c \, \eps$, for some constant $c>1$ to be determined later with the aid of a fixed point argument. Now we have to compute the error, that is we have to apply the Cahn-Hilliard operator $F$ 
\begin{equation}\label{eq:F(u)}
  F (u)  = -\Delta(-\Delta u+W^{'}(u))+W^{''}(u)(-\Delta u+W^{'}(u))
\end{equation}
to the approximate solution. Since the approximate solution is defined in a neighbourhood of the perturbed curve $\gamma_{T,\phi}$, namely in $V_{\eps,\phi}$, all the computations below will be performed in the coordinates $(s,t)\in\R\times(-1/4\eps,1/4\eps)$. 
Using then cut-off functions, we then extend $F$ to be identically zero for $|t|\ge 1/4\eps$.

It turns out that, thanks to this last choice of the approximate solution, the Willmore equation, (\ref{exp_lapl}) and a Taylor 
expansion of the potential $W$ in \eqref{eq:W}, the error is of order $\eps^3$. More precisely, for $|t|<1/4\eps$ we have
\begin{eqnarray}
F(v_0+v_{1,\eps,\phi}+v_{2,\eps,\phi})=-\frac{3}{2}\eps^3 k^3(2tv_0^{''}+v_0^{'})
+\eps^4\{-4k^4 t^2 v_0^{''}+k^4\eta^{''}-tv_0^{'}(3k^4+(k')^2)\}\label{Fv=Oeps3} \\\notag
+\eps^5\{-(h_\star^{(4)}+\psi_\star^{(4)})v_0^{'}+(h_\star^{''}+\psi_\star^{''})(3k^2 tv_0^{''}-k^2 v_0^{'}+6k^2(v_0^{'}+2tv_0^{''}))\\\notag
+(h_\star^{'}+\psi_\star^{'})kk'(8tv_0^{''}-v_0^{'}+6(v_0^{'}+2tv_0^{''}))+(h_\star+\psi_\star)(-9k^4 tv_0^{''}-4k^4 v_0^{'}-3(k')^2v_0^{'})\\\notag
+(h_\star-h+\psi_\star-\psi)^{(4)}v_0^{'}+2k(h_\star^{'}+\psi_\star^{'})^2(2v_0^{'''}-tL_\star v_0^{''})\\\notag
+k^5(-4t^2v_0^{'}-5t^3 v_0^{''}+2t\eta^{''}+12\eta\eta^{'}v_0+6\eta^2 v_0^{'}+\frac{5}{2}\eta^{'})+k(k')^2(-9t^2 v_0^{'}-4\eta^{'})\}+\eps^6 F^1_\eps(\psi). 
\end{eqnarray}
In (\ref{Fv=Oeps3}), the right-hand side is evaluated at $(\eps s,t)$. The term $F^1_\eps$ is defined to be identically zero for $|t|\ge 1/4\eps$ and can be suitably estimated using weighted norms. 
To introduce these, for any $0<\delta<\sqrt{2}$ and $x=(x_{1},x_{2})\in\R^{2}$ define the line integral
\begin{equation}\label{eq:G-delta}
\varphi_{\eps,\delta}(x):=\int_{\gamma_T} G_\delta(x,y)dl(y),
\end{equation}
where $G_\delta$ is the Green function of $-\Delta+\delta^2$ in $\R^2$. Notice that the periodicity of 
$\gamma_{T}$ and the exponential decay of  $G_\delta$ make the above integral converge. We denote by $C^{n,\alpha}(\R^{2})$ the space of functions $u:\R^{2}\to\R$ that are $n$ times differentiable and whose $n$-th derivatives are H\"{o}lder continuous with exponent $\alpha$. 
For $L:=(\gamma_T)_1(T)-(\gamma_T)_1(0)$, we set 
\begin{eqnarray}\notag
C^{n,\alpha}_{L,\delta}(\R^{2}):=\bigg\{u\in C^{n,\alpha}(\R^{2}):||u\varphi_{\eps,\delta}^{-1}||_{C^{n,\alpha}(\R^2)}<\infty,u(x_1,x_2)=-u(-x_1,-x_2)
=-u\bigg(x_1+\frac{L}{2},-x_2\bigg)\bigg\},
\end{eqnarray}
where
\begin{eqnarray}
||u||_{C^{n,\alpha}(\R^{2})}=\sum_{j=0}^{n}||\nabla^{j}u||_{L^{\infty}(\R^{2})}+\sup_{x\neq y}\sup_{|\beta|=n}\frac{|\partial_{\beta}u(x)-\partial_{\beta}u(y)|}{|x-y|^{\alpha}}.
\label{norm_1}
\end{eqnarray}
Functions belonging to these spaces decay exponentially away from the curve $\gamma_T$, with rate $e^{-\delta d(\cdotp,\gamma_T)}$, and satisfy its symmetries, that is they are even, periodic with period $L$, and they change sign after translation of half a period and a reflection about the $x_2$ axis. We endow these spaces with the norms
\begin{equation} \label{eq:Cnad}
||u||_{C^{n,\alpha}_\delta(\R^2)}:=||u\varphi_{\eps,\delta}^{-1}||_{C^{n,\alpha}(\R^2)}.
\end{equation}
Using this notation and recalling \eqref{eq:chil}, we have that the error term $F^1_\eps$ in \eqref{Fv=Oeps3} satisfies 
\begin{eqnarray}
\begin{cases}
\|\chi_4 F^1_\eps(\psi)\|_{C^{0,\alpha}_\delta(\R^2)}\leq c\\
\|\chi_4 F^1_\eps(\psi_1)-\chi_4 F^2_\eps(\psi_2)\|_{C^{0,\alpha}_\delta(\R^2)}\leq c\, \|\psi_1-\psi_2\|_{C^{4,\alpha}(\R)},
\end{cases}
\label{small_remainder}
\end{eqnarray}
for $\psi,\psi_1,\psi_2\in C^{4,\alpha}_{\bar{T}}(\R)$ such that $||\psi||_{C^{4,\alpha}(\R)},||\psi_i||_{C^{4,\alpha}(\R)}<1$ , $i=1,2$.

In what follows, we will solve the Cahn-Hilliard equation through a Lyapunov-Schmidt reduction, and to deal with the 
bifurcation equation  we will need to consider the projection of the error terms in the Cahn-Hilliard equation along the 
{\em kernel} of its linearised operator. Fixing $s$, this corresponds to multiplying the error term by $v_0^{'}(t)$ (and a 
cut-off function in $t$) and integrating in $t$. 
 For instance, if $\chi_l$ is the cut-off function introduced at the beginning of this Subsection, the projection 
\begin{eqnarray}\notag
G^1_\eps(\psi)(s):=\int_\R \chi_4(t) F^1_\eps(\psi)(s,t)v_0^{'}(t)dt
\end{eqnarray}
of $\chi_4 F^1_\eps(\psi)$ fulfils
\begin{eqnarray}
\begin{cases}
||G^1_\eps(\psi)||_{C^{0,\alpha}(\R)}\leq c\\
||G^1_\eps(\psi_1)-G^2_\eps(\psi_2)||_{C^{0,\alpha}(\R)}\leq c||\psi_1-\psi_2||_{C^{4,\alpha}(\R)}.
\end{cases}
\label{small_remainder_pr}
\end{eqnarray}
In other words, apart from the coefficient of order $\eps^6$, we get a remainder  which is uniformly bounded for $\psi$ in the unit ball of $C^{4,\alpha}_{\bar{T}}(\R)$, with Lipschitz dependence.

Setting \begin{equation}\label{eq:cstar}
c_\star:=\int_\R (v_0^{'})^2 dt>0,
\end{equation}
and using the fact that 
\begin{eqnarray}\notag
\int_\R tv_0^{''}v_0^{'} dt=-\frac{1}{2}c_\star,
\end{eqnarray}
we can see that the projection of the linear term in $h+\psi$ (and their derivatives) appearing at order $\eps^5$ is given by
\begin{eqnarray}
\int_\R \{-(h_\star^{(4)}+\psi_\star^{(4)})v_0^{'}+(h_\star^{''}+\psi_\star^{''})(3k^2 tv_0^{''}-k^2 v_0^{'}+6k^2(v_0^{'}+2tv_0^{''}))\label{pr_tildeL0}\\\notag
+(h_\star^{'}+\psi_\star^{'})kk'(8tv_0^{''}-v_0^{'}+6(v_0^{'}+2tv_0^{''}))\\\notag
+(h_\star+\psi_\star)(-9k^4 tv_0^{''}-4k^4 v_0^{'}-3(k')^2v_0^{'})+(h_\star-h+\psi_\star-\psi)^{(4)}v_0^{'}\}v_0^{'}dt=\\\notag
-c_\star (\tilde{L}_0(h_\star+\psi_\star)-(h-h_\star+\psi-\psi_\star)^{(4)})=\\\notag
-c_\star(\tilde{L}_0(h_\star+\psi_\star)+(\tilde{L}_0-\frac{d^4}{ds^4})(h_\star-h+\psi_\star-\psi)), 
\end{eqnarray}
where we recall that (see \eqref{eq:tL0})  
\begin{eqnarray}\notag
\tilde{L}_0\phi:=\phi^{(4)}+\frac{5}{2}(k^2 \phi^{'})'+(3(k')^2-\frac{1}{2}k^4)\phi.
\end{eqnarray}
The terms of order $\eps^3$ in \eqref{Fv=Oeps3} can be eliminated by adding to the approximate solution an 
extra correction of the form 
\begin{eqnarray}\notag
v_{3,\eps,\phi}(s,t):=\frac{3}{2}\eps^3k^3 \eta_1,
\end{eqnarray}
 where $\eta_1$ solves
\begin{eqnarray}
\begin{cases}
L_\star^2 \eta_1=2tv_0^{''}+v_0^{'},\\ 
\int_\R \eta_1 v_0^{'} dt=0.
\end{cases}
\label{def_eta1}
\end{eqnarray}
We point out that (\ref{def_eta1}) is solvable since the first right-hand side 
is orthogonal to $v'_0$,  i.e. 
\begin{eqnarray}\notag
\int_\R (2tv_0^{''}+v_0^{'})v_0^{'} dt=0.
\end{eqnarray}
As it is well-known, see e.g. \cite{MW}, the (decaying) kernel of $L_\star$ is generated by $v'_0$ so 
the existence of $\eta_1$ follows from Fredholm's theory.

As a consequence, using (\ref{exp_lapl}) once again and an expansion similar to \eqref{Fv=Oeps3}, for $|t|<1/4\eps$ we have
\begin{eqnarray}
F(v_0+v_{1,\eps,\phi}+v_{2,\eps,\phi}+v_{3,\eps,\phi})=
\eps^4\{-4k^4 t^2 v_0^{''}+k^4\eta^{''}-tv_0^{'}(3k^4+(k')^2)+3k^4 (L_\star\eta_1)'\}\label{approx2}\\\notag
+\eps^5\{(h-h_\star+\psi_\star-\psi)^{(4)}v_0^{'}-(h^{(4)}+\psi_\star^{(4)})v_0^{'}+(h^{''}+\phi_\star^{''})(3k^2 tv_0^{''}-k^2 v_0^{'}+6k^2(v_0^{'}+2tv_0^{''}))\\\notag
+(h_\star^{'}+\psi_\star^{'})kk'(8tv_0^{''}-v_0^{'}+6(v_0^{'}+2tv_0^{''}))+(h+\psi_\star)(-9k^4 tv_0^{''}-4k^4 v_0^{'}-3(k')^2v_0^{'})\\\notag
+k^5(-4t^2v_0^{'}-5t^3 v_0^{''}+2t\eta^{''}+12\eta\eta^{'}v_0+6\eta^2 v_0^{'}+\frac{5}{2}\eta^{'}+3t(L_\star\eta_1)'-\frac{3}{2}\eta_1^{''}+\frac{9}{2}L_\star\eta_1)\\\notag
+k(k')^2(-9t^2 v_0^{'}-4\eta^{'}-18L_\star \eta_1)+2k(h_\star^{'}+\psi_\star^{'})^2(2v_0^{'''}-tL_\star v_0^{''})\}+\eps^6 \{F^1_\eps(\psi)+F^2_\eps(\psi)\}.
\end{eqnarray}
Here
\begin{eqnarray}\label{eq:Qvw}
Q(v,w):=-(W^{'''}(v_0)vw)^{''}+W^{''}(v_0)W^{'''}(v_0)vw+W^{'''}(v_0)(v L_\star w+w L_\star v)=\\\notag
L_\star(W^{'''}(v_0)vw)+W^{'''}(v_0)(v L_\star w+w L_\star v),
\end{eqnarray}
Notice that $F(v_0+v_{1,\eps,\phi}+v_{2,\eps,\phi}+v_{3,\eps,\phi})$ and $F^2_\eps(\psi)$ vanish identically for $|t|\ge 1/4\eps$, $F^2_\eps(\psi)$ satisfies (\ref{small_remainder}) and the projection
\begin{eqnarray}\notag
G^2_\eps(\psi)(s):=\int_\R \chi_4(t) F^2_\eps(\psi)(s,t)v_0^{'}(t)dt
\end{eqnarray}
fulfils estimates similar to \eqref{small_remainder_pr}. Once again, in (\ref{approx2}) the right-hand side is evaluated at $(\eps s,t)$. Similarly, we can improve our approximate solution by correcting the terms of order $\eps^4$ in \eqref{approx2}. By 
the second equality in \eqref{eq:Qvw} and some integration by parts we have that 
\begin{eqnarray}\notag
\int_\R \{-3(L_\star\eta_1)'+4t^2 v_0^{''}+3tv_0^{'}-\eta^{''}-\frac{1}{2}Q(\eta,\eta)\}v_0^{'} dt=\int_\R t(v_0^{'})^2 dt=0.
\end{eqnarray}
Therefore by the above comments we can solve
\begin{eqnarray}\notag
\begin{cases}
L_\star^2 \eta_{2}=-3(L_\star\eta_1)'+4t^2 v_0^{''}+3tv_0^{'}-\eta^{''}-\frac{1}{2}Q(\eta,\eta);\\
\int_\R \eta_2 v_0^{'} dt=0, 
\end{cases}
\end{eqnarray}
and
\begin{eqnarray}\notag
\begin{cases}
L_\star^2 \eta_3=tv_0^{'};\\
\int_\R \eta_3 v_0^{'} dt=0.
\end{cases}
\end{eqnarray}
Finally, we set
\begin{equation}\label{eq:approx-sol-final}
\tilde{v}_{\eps,\phi}(s,t):=v_0(t)+v_{1,\eps,\phi}(s,t)+v_{2,\eps,\phi}(s,t)+v_{3,\eps,\phi}(s,t)+v_{4,\eps,\phi}(s,t),
\end{equation}
where $v_{4,\eps,\phi}(s,t)=\eps^4(k^4 \eta_2+(k')^2\eta_3)$. For $|t|<1/4\eps$, using expansions similar to the 
previous ones we compute
\begin{eqnarray}
F(\tilde{v}_{\eps,\phi})=\eps^5 \{-(h_\star^{(4)}+\psi_\star^{(4)})v_0^{'}+(h_\star^{''}+\psi_\star^{''})(3k^2 tv_0^{''}-k^2 v_0^{'}+6k^2(v_0^{'}+2tv_0^{''}))\label{Ftildev}\\\notag
+(h_\star^{'}+\psi_\star^{'})kk'(8tv_0^{''}-v_0^{'}+6(v_0^{'}+2tv_0^{''}))\\\notag
+(h_\star+\psi_\star)(-9k^4 tv_0^{''}-4k^4 v_0^{'}-3(k')^2v_0^{'})+(h_\star-h+\phi_\star-\phi)^{(4)}v_0^{'}\\\notag
+E_5(\eps s,t)+2k(h_\star^{'}+\psi_\star^{'})^2(2v_0^{'''}-tL_\star v_0^{''})\}+\eps^6\{F^1_\eps(\psi)+F^2_\eps(\psi)+F^3_\eps(\psi)\}.
\end{eqnarray}
Here
\begin{eqnarray}\notag
E_5(s,t):=k^5(-4t^2 v_0^{'}-5t^3 v_0^{''}+2t\eta^{''}-\frac{3}{2}Q(\eta,\eta_1)+3t(L_\star\eta_1)'
\\\notag
+12\eta\eta^{'} v_0+6\eta^2 v_0^{'}-\frac{3}{2}\eta_1^{''}+\frac{9}{2}L_\star\eta_1+\frac{5}{2}\eta^{'}\\\notag
+2(L_\star\eta_2)')+k(k')^2(2(L_\star\eta_3)'-9t^2 v_0^{'}-4\eta^{'}-18 L_\star\eta_1),
\end{eqnarray}
$F^3_\eps$ is identically zero for $|t|\ge 1/4\eps$ and satisfies the counterpart of (\ref{small_remainder}) and
\begin{eqnarray}\notag
G^3_\eps(\psi)(s):=\int_\R \chi_4(t) F^3_\eps(\psi)(s,t)v_0^{'}(t)dt
\end{eqnarray}
fulfils estimates similar to (\ref{small_remainder_pr}). In order to handle the error, we introduce a suitable function space and we endow it with an appropriate weighted norm. We set, for $0<\delta<\sqrt{2}$,
\begin{eqnarray}
\psi_\delta(x):=\zeta(|x_2|)+(1-\zeta(|x_2|))e^{-\delta|x_2|} 
\label{defpsi}
\end{eqnarray}
where $\zeta$ is defined in (\ref{def_zeta}). Now, we define the spaces
\begin{eqnarray}\notag
D^{n,\alpha}_{T,\delta}(\R^2):=\bigg\{U\in C^{n,\alpha}(\R^2):||U\psi_{\delta}||_{C^{n,\alpha}(\R^2)}<\infty,U(x_1,x_2)=-U(-x_1,-x_2)
=-U\bigg(x_1+\frac{T}{2},-x_2\bigg)\bigg\},
\end{eqnarray}
endowed with the norms
\begin{equation} \label{eq:Dnad}
||u||_{D^{n,\alpha}_\delta(\R^2)}:=||u\psi_{\delta}||_{C^{n,\alpha}(\R^2)}.
\end{equation}
The difference between the space $D^{n,\alpha}_{T,\delta}(\R^2)$  and the 
space
$C^{n,\alpha}_{L,\delta}(\R^2)$  introduced previously are the weight function, which depends just on one variable in the case of $D^{n,\alpha}_{T,\delta}(\R^2)$, and the period.

Recalling \eqref{eq:cstar}, define the constant 
\begin{equation}\label{eq:dstar}
d_\star:=\int_\R t^2(v_0^{'})^2 dt>0
\end{equation}
and recall  the definition of $\overline{\phi}$ in Proposition \ref{p:inv-rhs}. 
From the previous computations, we have the following result.

\begin{proposition}\label{p:app-sol-final}
There exist a constant $\tilde{c}>0$ such that
\begin{eqnarray}
||F(\tilde{v}_{\eps,\phi})||_{D^{0,\alpha}_\delta(\R^2)}\leq \tilde{c}\, \eps^5,\label{error_eps5}
\end{eqnarray}
for any $\phi\in C^{4,\alpha}_{\bar{T}}(\R)$ such that $\phi=\eps(\frac{d_\star}{c_\star}\overline{\phi}+\psi)$, with $||\psi||_{C^{4,\alpha}(\R)}<1$.
\end{proposition}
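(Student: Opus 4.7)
The plan is to read the bound directly off the expansion already performed in \eqref{Ftildev}. Inside the tubular neighbourhood $V_\eps$, that formula expresses $F(\tilde v_{\eps,\phi})$ as an explicit $\eps^5$ expression plus the higher-order remainders $\eps^6(F^1_\eps(\psi)+F^2_\eps(\psi)+F^3_\eps(\psi))$. Outside $V_\eps$, the globalized version of the approximate solution equals $\mathbb{H}\in\{\pm 1\}$, where both $W'$ and $W''W'$ vanish, so $F$ contributes nothing; the only additional contribution comes from the cut-off transition band, where $\hat v_{\eps,\phi}$ differs from $\mathbb{H}$ by a quantity that is exponentially small in $1/\eps$ (because $v_0\to\pm 1$ and $\eta,\eta_j$ decay exponentially). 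This contribution is thus of order $e^{-c/\eps}$, absorbed into $\tilde c\,\eps^5$.

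For the $\eps^5$ explicit term I would bound each summand separately in the weighted norm. The coefficients of $v_0^{(j)}$, $\eta^{(j)}$ and $\eta_i^{(j)}$ are polynomial expressions in $k,k',h,h',h'',h''',h^{(4)}$ and $\psi,\ldots,\psi^{(4)}$ evaluated at $\eps s$, all uniformly bounded because $h=\tfrac{d_\star}{c_\star}\bar\phi\in C^\infty_{\bar T}(\R)$ by Proposition \ref{p:inv-rhs} and $\|\psi\|_{C^{4,\alpha}}<1$ by assumption. The smoothed differences $(h_\star-h)^{(j)}$ and $(\psi_\star-\psi)^{(j)}$ appearing in \eqref{Ftildev} are controlled through \eqref{smooth3}: using the smoothness of $h$ one obtains $O(\eps)$, while for $\psi\in C^{4,\alpha}_{\bar T}$ the same estimate yields a uniform bound. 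Either way, after multiplication by the outer $\eps^5$ one obtains an $L^\infty$ bound of order $\eps^5$. The $\eps^6$ remainders are dealt with directly by \eqref{small_remainder}, which gives an $O(\eps^6)$ contribution in $C^{0,\alpha}_\delta$, better than needed.

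The passage to the weighted $D^{0,\alpha}_{T,\delta}(\R^2)$-norm relies on the exponential decay in $t$ of the transition profile and its corrections: $v_0^{(j)}$, $\eta^{(j)}$, $\eta_i^{(j)}$ and $E_5$ all decay with rate at least $\sqrt 2$. Inside $V_\eps$ the weight $\psi_\delta(x)$ is comparable, up to a multiplicative constant depending only on $\gamma$, to $e^{-\delta|t|}$, because the vertical extent of $\gamma_T$ is bounded uniformly in $T$. Since $\delta<\sqrt 2$, multiplication by $\psi_\delta^{-1}$ preserves boundedness, and a Hölder bound in the Fermi variables $(s,t)$ transfers to one in $(x_1,x_2)$ through the $C^{4,\alpha}$-diffeomorphism $Z_\eps$, whose Jacobians are bounded independently of $T$. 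The required symmetries of $F(\tilde v_{\eps,\phi})$ follow from the parities imposed on $h$ and $\psi$ (both in $C^{4,\alpha}_{\bar T}(\R)$) together with the symmetry properties of $k$, $v_0$, $\eta$ and the $\eta_j$. The main obstacle, in my view, is the careful bookkeeping of the smoothing-operator estimates on $(h_\star-h)^{(j)}$ and $(\psi_\star-\psi)^{(j)}$ at each derivative order appearing in \eqref{Ftildev}, together with the verification that the Hölder seminorm part of $\|\cdot\|_{D^{0,\alpha}_\delta}$ (not merely the $L^\infty$ part) behaves correctly under the coordinate change and receives exponential decay from $\psi_\delta^{-1}$; once this is carried out the estimate \eqref{error_eps5} follows with $\tilde c$ depending only on $\gamma$ and on $\delta\in(0,\sqrt 2)$.
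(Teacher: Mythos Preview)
Your core strategy---reading the bound directly off \eqref{Ftildev} and controlling the $\eps^6$ remainders via \eqref{small_remainder}---is exactly what the paper does; indeed the paper offers no separate proof and simply states that the proposition follows ``from the previous computations.'' So the approach is correct and matches the paper.

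Two points of confusion in your write-up should be cleaned up, however. First, the discussion of ``outside $V_\eps$'' and the globalized function $v_{\eps,\phi}=\chi_5\hat v_{\eps,\phi}+(1-\chi_5)\mathbb H$ is irrelevant here: the proposition is about $\tilde v_{\eps,\phi}$, which is defined and computed entirely in Fermi coordinates $(s,t)$, and by the paper's convention $F(\tilde v_{\eps,\phi})$ is set to zero for $|t|\ge 1/4\eps$. No cut-off transition band enters. Second, and more substantively, your justification for the weight comparison is wrong: the vertical extent of $\gamma_T=\eps^{-1}\gamma(\eps\,\cdot)$ is of order $\eps^{-1}=T/\bar T$, \emph{not} uniformly bounded in $T$. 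Fortunately this does not matter, because the space $D^{0,\alpha}_{T,\delta}$ is used for functions in the $(s,t)$ variables (note the period is $T$, the arc-length period, not $L$), so the weight $\psi_\delta$ is simply $\psi_\delta(t)$ and no comparison with the Euclidean $x_2$ is needed. Once you drop these extraneous steps, what remains is precisely the paper's (implicit) argument.
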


\

\begin{remark}\label{r:expan}
The estimate in Proposition \ref{p:app-sol-final} holds for any function $\phi$ of order $\eps$ in $C^{4,\alpha}$ norm. However, 
for later purposes, we will need to take $\phi=\eps(\frac{d_\star}{c_\star}\overline{\phi}+\psi)$ as above 
in order to determine the principal term in the expansion after projecting onto $v'_0$, when dealing 
with the bifurcation equation.  
\end{remark}

\section{The Lyapunov-Schmidt reduction}\label{s:red}

Up to now, we have only constructed an approximate solution to \eqref{Cahn-Hilliard}, not a true solution, since $F(v_{\varepsilon,\phi})$ is small but not zero (see \eqref{eq:F(u)} and \eqref{defv}). Therefore we try to add a small correction $w=w_{\varepsilon,\phi}:\R^{2}\to\R$ in such a way that $F(v_{\varepsilon,\phi}+w)=0$. Rephrasing our problem in this way, the unknowns are $\phi$ and $w$, for any $\varepsilon>0$ small but fixed (recall that $\eps = \frac{\bar{T}}{T}$). Expanding $F$ in Taylor series, our equation becomes
\begin{equation*}
F(v_{\varepsilon,\phi})+F^{'}(v_{\varepsilon,\phi})[w]+Q_{\varepsilon,\phi}(w)=0,
\label{taylor}
\end{equation*}
where
\begin{eqnarray}
Q_{\varepsilon,\phi}(w)=\int_{0}^{1}dt\int_{0}^{t}F^{''}(v_{\varepsilon,\phi}+sw)[w,w]ds. 
\label{quadratic_w}
\end{eqnarray}
In order to study \eqref{taylor}, we use a Lyapunov-Schmidt reduction, consisting in an {\em auxiliary equation} in $w$ and 
a {\em bifurcation equation} in $\phi$.

\subsection{The auxiliary equation: a gluing procedure}

Recalling the definition of the cut-off $\chi_{l}$ in Subsection \ref{ss:constr}, 
we look for a correction $w$ of the following form
\begin{eqnarray}\notag
w(x)=\chi_{2}(x)\hat{U}(x)+V(x), 
\end{eqnarray}
where $V,\hat{U}$ are defined in $\R^{2}$.  Since $\hat{U}$ is multiplied by a cut-off function that is identically zero far from $\gamma_T$, we look for some suitable function $U=U(t,s)$ defined in $\R^{2}$, then 
we set (see \eqref{eq:Veps})
\begin{eqnarray}\notag
\hat{U}(x):=
\begin{cases}
U(Z_{\varepsilon}^{-1}(x)), &\text{if $x\in V_{\varepsilon}$}\\
0 &\text{if $x\in\R^{2}\backslash V_{\varepsilon}$}.
\end{cases}
\end{eqnarray}
As above, the value of $\hat{U}$ far from the curve does not  matter, since it is multiplied by a cut-off function.\\

\begin{remark}
Let $\tilde{v}_{\eps, \phi}$ be as in Proposition \ref{p:app-sol-final} and let $v_{\eps,\phi}$ be as in \eqref{defv} (see also the 
subsequent formula). Then the potential
\begin{eqnarray}\notag
\Gamma_{\varepsilon,\phi}(x):=(1-\chi_{1}(x))W^{''}(v_{\varepsilon,\phi})+\chi_{1}(x)W^{''}(1)    \qquad \quad 
(W''(1) = 2)
\end{eqnarray}
is positive and bounded away from $0$ in the whole $\R^{2}$. Precisely, for any $0<\delta<\sqrt{2}$, we have $0<\delta^{2}<\Gamma_{\varepsilon,\phi}(x)<2$ provided $\varepsilon$ is small enough, the estimate being uniform in $\phi$. By construction, $\Gamma_{\varepsilon,\phi}\in C^{4,\alpha}(\R^{2})$, it is periodic of period $L$ (the $x_1$-period of $\gamma_{T}$), and the $L^{\infty}$ norms of the derivatives are bounded uniformly in $\phi$ and in $\varepsilon$.
\label{rem_potential}
\end{remark}


Using the fact that $\chi_{2}\chi_{1}=\chi_{1}$, $\chi_2\chi_4=\chi_2$ (recall \eqref{eq:chil}) and the Taylor expansion (\ref{taylor}), we can see that
the Cahn-Hilliard equation $F(v_{\varepsilon,\phi}+w)=0$ can be rewritten as 
\begin{eqnarray}\notag
F(v_{\eps,\phi})+F^{'}(v_{\eps,\phi})w+Q_{\eps,\phi}(w)\\\notag
=\chi_{2}\bigg\{\chi_4 F(\hat{v}_{\varepsilon,\phi})+F^{'}(\hat{v}_{\varepsilon,\phi})\hat{U}+\chi_{1}Q_{\varepsilon,\phi}(\chi_{2}\hat{U}+V)
+\chi_{1}\text{M}_{\varepsilon,\phi}(V)\bigg\}\\\notag+(-\Delta+\Gamma_{\varepsilon,\phi})^{2}V
+(1-\chi_{2})F(v_{\varepsilon,\phi})+(1-\chi_{1})Q_{\varepsilon,\phi}(\chi_{2}\hat{U}+V)+\text{N}_{\varepsilon,\phi}(\hat{U})+\text{P}_{\varepsilon,\phi}(V),
\end{eqnarray}
where
\begin{eqnarray}
\text{M}_{\varepsilon,\phi}(V):=(W^{''}(\hat{v}_{\varepsilon,\phi})-W^{''}(1))(-\Delta V+\Gamma_{\varepsilon,\phi} V)\label{defM}\\\notag
+(-\Delta+W^{''}(\hat{v}_{\varepsilon,\phi}))\big[(W^{''}(\hat{v}_{\varepsilon,\phi})-W^{''}(1))V\big];\\
\text{N}_{\varepsilon,\phi}(\hat{U}):=-2\langle\nabla\chi_{2},\nabla(-\Delta\hat{U}+W^{''}(\hat{v}_{\varepsilon,\phi})\hat{U})\rangle-\Delta\chi_{2}(-\Delta \hat{U}+W^{''}(\hat{v}_{\varepsilon,\phi})\hat{U})\label{defN}\\\notag
+(-\Delta+W^{''}(\hat{v}_{\varepsilon,\phi}))(-2\langle\nabla\chi_{2},\nabla\hat{U}\rangle-\Delta\chi_{2}\hat{U});\\
\text{P}_{\varepsilon,\phi}(V):=-2\langle\nabla\chi_{1},\nabla((W^{''}(\hat{v}_{\varepsilon,\phi})-W^{''}(1))V)\rangle
-\Delta\chi_{1}(W^{''}(\hat{v}_{\varepsilon,\phi})-W^{''}(1))V\label{defP}\\\notag
+W^{'''}(v_{\varepsilon,\phi})(-\Delta v_{\varepsilon,\phi}+W^{'}(v_{\varepsilon,\phi}))V.
\end{eqnarray}
By the expansion of the Laplacian (\ref{exp_lapl}), we can see that, expressing $F'(\tilde{v}_{\eps,\phi})$ in the $(s,t)$-coordinates, for $|t|<1/4\eps$,
\begin{eqnarray}
F^{'}(\tilde{v}_{\eps,\phi})=\mathcal{L}^2+\text{R}_{\eps,\phi},\label{defR}
\end{eqnarray}
where 
\begin{equation} \label{eq:mcal-L}
\mathcal{L}=-(\partial^2_s+\partial^2_t)+W^{''}(v_{0}(t))
\end{equation}
and $\text{R}_{\eps,\phi}=O(\varepsilon)$, in the sense that (recall \eqref{eq:Dnad})
$$
||\chi_4 \text{R}_{\eps,\phi}U||_{D^{0,\alpha}_\delta(\R^2)}\leq c \, \eps||U||_{D^{4,\alpha}_\delta(\R^2)}.
$$
Once again, we have extended $\text{R}_{\eps,\phi}$ to be identically zero for $|t|\ge 1/4\eps$. Hence we have reduced our problem to finding a solution $(\phi,V,U)$ to the system
\begin{numcases}{}
(-\Delta+\Gamma_{\varepsilon,\phi})^{2}V+(1-\chi_{2})F(v_{\varepsilon,\phi})+(1-\chi_{1})Q_{\varepsilon,\phi}(\chi_{2}\hat{U}+V)\label{eq_aux1}\\\notag
+\text{N}_{\varepsilon,\phi}(\hat{U})+\text{P}_{\varepsilon,\phi}(V)=0; &\text{ in $\R^{2}$}\\
\chi_4 F(\tilde{v}_{\varepsilon,\phi})+\mathcal{L}^2 U+\chi_4 \text{R}_{\eps,\phi}U
+\chi_{1}Q_{\varepsilon,\phi}(\chi_{2}U+V(Z_{\varepsilon}(s,t)))\label{eq_aux2}
+\chi_{1}\text{M}_{\varepsilon,\phi}(V)=0 &\text{for $|t|\leq 1/8\varepsilon+4$}.
\end{numcases}
It is understood that, in equation (\ref{eq_aux2}), the cut-off functions and $V$ are evaluated at $Z_\eps(s,t)$, see 
\eqref{def_Z}. First we fix $\phi$ and $U$ and we solve the auxiliary equation (\ref{eq_aux1}) by a fixed point argument, using the coercivity of the operator $(-\Delta+\Gamma_{\varepsilon,\phi})^{2}$.  This is possible due to fact that the potential $\Gamma_{\varepsilon,\phi}$ is bounded from above and from below by positive constants (see Remark \ref{rem_potential}).\\

We have next the following result, that will be proved in Section \ref{s:tech} (recall \eqref{eq:Dnad} and \eqref{eq:Cnad}).

\begin{proposition}
For any $\varepsilon>0$ small enough, for any $U\in D^{4,\alpha}_{T,\delta}(\R^{2})$ such that $||U||_{D^{4,\alpha}_{\delta}(\R^{2})}< 1$ and for any $\phi\in C^{4,\alpha}_{\bar{T}}(\R)$ with $||\phi||_{C^{4,\alpha}(\R)}<1$, equation (\ref{eq_aux1}) admits a solution $V_{\varepsilon,\phi,U}\in C^{4,\alpha}_{L,\delta}(\R^{2})$ satisfying
\begin{eqnarray}\notag
\begin{cases}
||V_{\varepsilon,\phi,U}||_{C^{4,\alpha}_\delta(\R^{2})}\leq c_{1}e^{-\delta/8\varepsilon};\\
||V_{\varepsilon,\phi,U_{1}}-V_{\varepsilon,\phi,U_{2}}||_{C^{4,\alpha}_\delta(\R^{2})}\leq c_{1}e^{-\delta/8\varepsilon}||U_{1}-U_{2}||_{D^{4,\alpha}_{\delta}(\R^{2})};\\
||V_{\varepsilon,\phi_{1},U}-V_{\varepsilon,\phi_{2},U}||_{C^{4,\alpha}_\delta(\R^{2})}\leq c_{1}e^{-\delta/8\varepsilon}||\phi_{1}-\phi_{2}||_{C^{4,\alpha}(\R)},
\end{cases}
\end{eqnarray}
for any $U_{1},U_{2}$ with $||U_{1}||_{D^{4,\alpha}_{\delta}(\R^{2})},||U_{2}||_{D^{4,\alpha}_{\delta}(\R^{2})}<1$, for any $\phi_{1},\phi_{2}\in C^{4,\alpha}_{\bar{T}}(\R)$ with $||\phi_{i}||_{C^{4,\alpha}(\R)}<1$, $i=1,2$, and for some constant $c_{1}>0$ independent of $U$, $\varepsilon$ and $\phi$.
\label{propaux_1}
\end{proposition}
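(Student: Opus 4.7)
The plan is to recast (\ref{eq_aux1}) as a fixed-point equation for $V$ in the weighted space $C^{4,\alpha}_{L,\delta}(\R^2)$ by inverting the coercive fourth-order operator $(-\Delta+\Gamma_{\varepsilon,\phi})^2$. Freezing $\phi$ and $U$, I consider the map
$$\mathcal{T}_{\varepsilon,\phi,U}(V) := -(-\Delta+\Gamma_{\varepsilon,\phi})^{-2}\Bigl[(1-\chi_{2})F(v_{\varepsilon,\phi}) + (1-\chi_{1})Q_{\varepsilon,\phi}(\chi_{2}\hat U + V) + \mathrm{N}_{\varepsilon,\phi}(\hat U) + \mathrm{P}_{\varepsilon,\phi}(V)\Bigr],$$
and seek a fixed point in the ball $B_r := \{V: \|V\|_{C^{4,\alpha}_\delta(\R^2)} \le r\}$ with $r = c_1 e^{-\delta/(8\varepsilon)}$ for a suitably large $c_1$. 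By the construction of Section \ref{ss:constr} the quantities $\Gamma_{\varepsilon,\phi}$, $v_{\varepsilon,\phi}$, $\hat v_{\varepsilon,\phi}$, and the cut-offs all respect the discrete symmetries defining $C^{4,\alpha}_{L,\delta}$, so every iterate lies in the correct symmetry class.

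The central technical step is to show that $(-\Delta+\Gamma_{\varepsilon,\phi})^{-2}$ is bounded from $C^{0,\alpha}_{L,\delta}(\R^2)$ into $C^{4,\alpha}_{L,\delta}(\R^2)$ uniformly in $\varepsilon$ and $\phi$. Remark \ref{rem_potential} gives $\delta^2 \le \Gamma_{\varepsilon,\phi} \le 2$, so $-\Delta+\Gamma_{\varepsilon,\phi}$ is coercive and its Green's function is pointwise dominated by that of $-\Delta+\delta^2$. The weight $\varphi_{\eps,\delta}$ is a distributional solution of $(-\Delta+\delta^2)\varphi_{\eps,\delta}=\mathcal{H}^1\lfloor\gamma_T$, and hence a natural global super-solution. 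A maximum-principle comparison yields $|(-\Delta+\Gamma_{\varepsilon,\phi})^{-1}f| \le C\|f\varphi_{\eps,\delta}^{-1}\|_{L^\infty}\varphi_{\eps,\delta}$; interior Schauder estimates upgrade this to a weighted $C^{4,\alpha}$-bound, and iterating gives the desired estimate on the square of the operator, together with the inherited symmetries.

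The remaining work is bookkeeping estimates for each term in the bracket. The error $(1-\chi_2)F(v_{\varepsilon,\phi})$ is supported in $\{|t|\ge 1/(8\varepsilon)+3\}$, where the transition profile $\hat v_{\varepsilon,\phi}$ is already within $O(e^{-\sqrt 2|t|})$ of $\pm 1$, so its weighted norm is $O(e^{-(\sqrt 2-\delta)/(8\varepsilon)})$; the commutator $\mathrm{N}_{\varepsilon,\phi}(\hat U)$ is supported in a fixed-width slab near $|t|=1/(8\varepsilon)+3$, where the $D$-weight of $U$ is comparable to $e^{-\delta/(8\varepsilon)}$, giving an $O(e^{-\delta/(8\varepsilon)}\|U\|_{D^{4,\alpha}_\delta})$ bound; the linear-in-$V$ term $\mathrm{P}_{\varepsilon,\phi}(V)$ has operator norm $o(1)$ as $\varepsilon\to 0$ because $W''(\hat v_{\varepsilon,\phi})-W''(1)=O(e^{-\sqrt 2|t|})$ on $\mathrm{supp}(1-\chi_1)$ and $-\Delta v_{\varepsilon,\phi}+W'(v_{\varepsilon,\phi})$ is small on the support of the third piece of $\mathrm{P}$; finally $(1-\chi_1)Q_{\varepsilon,\phi}(\chi_2\hat U+V)$ is quadratic in its argument and supported far from $\gamma_T$, contributing $O(r^2)$. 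Summing these bounds and choosing $c_1$ large, $\mathcal{T}_{\varepsilon,\phi,U}$ sends $B_r$ into itself and is a strict contraction, producing $V_{\varepsilon,\phi,U}$ by Banach's theorem. The Lipschitz dependence on $U$ and $\phi$ follows from the same inversion applied to the differences of sources: the $U$-dependence enters only through the exponentially localized factor $1-\chi_1$, while the $\phi$-dependence in $v_{\varepsilon,\phi}$, $\Gamma_{\varepsilon,\phi}$, and the smoothed terms is $C^1$ with small derivative in the relevant regions, so both difference operators have norm $O(e^{-\delta/(8\varepsilon)})$. The main obstacle is the weighted uniform invertibility in step one, since the line-integral weight $\varphi_{\eps,\delta}$ concentrates on $\gamma_T$ and requires combining a maximum-principle argument for the second-order factor with Schauder estimates on bounded balls intersecting $\gamma_T$ to pass to the fourth-order, $C^{4,\alpha}$-level inverse.
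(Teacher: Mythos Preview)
Your proposal is correct and follows essentially the same approach as the paper: the paper also rewrites (\ref{eq_aux1}) as a fixed-point problem $V=\mathcal{S}_1(V)$ on the ball $\Lambda_1=\{V:\|V\|_{C^{4,\alpha}_\delta}\le C_1 e^{-\delta/8\varepsilon}\}$, after constructing a uniform right inverse $\Psi_{\varepsilon,\phi}$ for $(-\Delta+\Gamma_{\varepsilon,\phi})^2$ in the weighted space $C^{4,\alpha}_{L,\delta}(\R^2)$. The only difference is presentation of the linear step: the paper factors through the second-order problem (Lemmas \ref{prop_linpb_far} and \ref{prop_linpb_far_4}), obtaining decay via a barrier of the form $\lambda\varphi_{\eps,\delta}+\tau\varphi_{\eps,\delta}^{-1}$ and the weighted $L^\infty$ bound via a translation--compactness argument at infinity, before iterating to fourth order; your sketch compresses this into a single maximum-principle/Schauder step but rests on the same ingredients.
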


\

Since we reduced solving the Cahn-Hilliard equation to the system \eqref{eq_aux1}-\eqref{eq_aux2}, 
it remains to solve the second component. The operator $\mathcal{L}^2$ (see \eqref{eq:mcal-L}) is not uniformly 
coercive as $\eps \to 0$: in fact, in the $t$ component it annihilates $v'_0(t)$, while due to the fact that $s$ 
lies in an expanding domain, the spectrum of $\partial_s^2$ approaches zero. 
Due to the consequent lack of invertibility of $\mathcal{L}^2$ we need some orthogonality condition to solve equation $\mathcal{L}^2 U=f$, that is
\begin{eqnarray}\notag
\int_{\R}f(s,t)v_0^{'}(t)dt=0 &\forall s\in\R,
\end{eqnarray}
as we will see in Subsection $5.1$, and the solution will satisfy the same orthogonality condition (for a detailed discussion, see Section $5$). As a consequence, equation (\ref{eq_aux2}) cannot be solved directly, through a fixed point argument, hence we subtract the projection along $v'_0$ of the right-hand side. In other words, setting
\begin{eqnarray}
\text{T}(U,V,\phi):=\chi_{1}Q_{\varepsilon,\phi}(\chi_2 U+V)+\chi_4 \text{R}_{\varepsilon,\phi}(U)+\chi_{1}\text{M}_{\varepsilon,\phi}(V); \label{defT}
\end{eqnarray}
\begin{eqnarray}
p_{\phi}(s):=\frac{1}{c_{\star}}\int_{-\infty}^{\infty}\big(\chi_4 F(\tilde{v}_{\varepsilon,\phi})
+\text{T}(U,V_{\varepsilon,\phi,U},\phi)\big)(s,t)v_{0}^{'}(t)dt\label{defp},
\end{eqnarray}
we can solve 
\begin{eqnarray}
\mathcal{L}^{2}U=-\chi_4 F(\tilde{v}_{\varepsilon,\phi})-\text{T}(U,V_{\varepsilon,\phi,U},\phi)
+p_\phi(s)v_{0}^{'}(t)\label{proj_prob}\\
\int_\R U(s,t)v_{0}^{'}(t)dt=0 &\forall s\in\R. \nonumber
\end{eqnarray}
in $U$, for any small but fixed $\phi\in C^{4,\alpha}_{\bar{T}}(\R)$. Concerning the operator near $\gamma_T$, we have the following result, that will be proved in Section \ref{s:tech}.

\begin{proposition}
For any $\varepsilon>0$ small enough and for any $\phi\in C^{4,\alpha}_{\bar{T}}(\R)$ with $||\phi||_{C^{4,\alpha}(\R)}<1$, we can find a solution $U_{\varepsilon,\phi}\in D^{4,\alpha}_{T,\delta}(\R^2)$ to equation (\ref{proj_prob}) satisfying the orthogonality condition
\begin{eqnarray}
\int_{\R}U_{\eps,\phi}(s,t)v_{0}^{'}(t)dt=0, &\forall s\in\R
\end{eqnarray}
and the estimates
\begin{eqnarray}
\begin{cases}
||U_{\varepsilon,\phi}||_{D^{4,\alpha}_{\delta}(\R^2)}\leq c_{2}\, \varepsilon^{5}\\
||U_{\varepsilon,\phi_{1}}-U_{\varepsilon,\phi_{2}}||_{D^{4,\alpha}_{\delta}(\R^2)}\leq c_{2}\, \varepsilon^{5}||\phi_{1}-\phi_{2}||_{C^{4,\alpha}(\R)},
\end{cases}
\end{eqnarray}
for any $\phi_{1},\phi_{2}\in C^{4,\alpha}_{T}(\R)$ with $||\phi_i||_{C^{4,\alpha}(\R)}<1$, $i=1,2$, for some constant $c_{2}>0$ independent of $\varepsilon$.
\label{propaux_2}
\end{proposition}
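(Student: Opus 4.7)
The plan is to exploit the orthogonality built into \eqref{proj_prob}: by construction, subtracting $p_\phi(s) v_0'(t)$ makes the right-hand side orthogonal to $v_0'(t)$ at each fixed $s$, which is exactly the compatibility condition needed to invert $\mathcal{L}^2$. I would first construct an inverse $\mathcal{K}$ of $\mathcal{L}^2$ on the subspace of functions in $D^{4,\alpha}_{T,\delta}(\R^2)$ satisfying this orthogonality constraint, and then recast \eqref{proj_prob} as a Banach fixed-point problem in a ball of radius $c_2 \eps^5$.

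For the inversion, writing $\mathcal{L} = -\partial_s^2 + L_\star$ with $L_\star = -\partial_t^2 + W''(v_0(t))$ and decomposing in Fourier modes in $s$ (compatible with the $L$-periodicity and the symmetries \eqref{symm}) reduces $\mathcal{L}$ to the one-parameter family $k^2 + L_\star$ acting in the $t$-variable. The operator $L_\star$ has $v_0'$ as its unique bounded kernel element and a positive spectral gap on the $L^2$-orthogonal complement of $v_0'$, with fibrewise Green's function decaying exponentially at rate $\sqrt{2}$. For every $k\neq 0$ the operator $k^2 + L_\star$ is coercive uniformly in $k$, and for $k=0$ the orthogonality condition rules out the kernel. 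Together with Schauder theory, this produces a bounded inverse of $\mathcal{L}$ between the corresponding $v_0'$-orthogonal subspaces of $D^{n,\alpha}_{T,\delta}(\R^2)$ for any $0<\delta<\sqrt{2}$; iterating yields $\mathcal{K} := \mathcal{L}^{-2}$, mapping orthogonal-constrained $D^{0,\alpha}_{T,\delta}$ functions to orthogonal-constrained $D^{4,\alpha}_{T,\delta}$ functions with norm uniform in $\eps$.

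With $\mathcal{K}$ in hand, I would define
\begin{equation*}
\Phi(U) := \mathcal{K}\Bigl(-\chi_4 F(\tilde{v}_{\eps,\phi}) - \text{T}(U, V_{\eps,\phi,U}, \phi) + p_\phi(s) v_0'(t)\Bigr),
\end{equation*}
with $V_{\eps,\phi,U}$ given by Proposition \ref{propaux_1}. By construction the argument of $\mathcal{K}$ is orthogonal to $v_0'$ at each $s$ and inherits the required symmetries. On the ball $B_\rho$ of radius $\rho = c_2 \eps^5$, the summands satisfy: $\|\chi_4 F(\tilde v_{\eps,\phi})\|_{D^{0,\alpha}_\delta} + \|p_\phi v_0'\|_{D^{0,\alpha}_\delta} \le C \eps^5$ by Proposition \ref{p:app-sol-final} and \eqref{small_remainder_pr}; the linear term $\chi_4 \text{R}_{\eps,\phi} U$ contributes $O(\eps \rho)$ via \eqref{defR}; the quadratic term $\chi_1 Q_{\eps,\phi}(\chi_2 U + V_{\eps,\phi,U})$ contributes $O(\rho^2) + O(e^{-\delta/4\eps})$ using Proposition \ref{propaux_1}; and $\chi_1 \text{M}_{\eps,\phi}(V_{\eps,\phi,U})$ is $O(e^{-\delta/8\eps})$. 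Choosing $c_2$ large enough and $\eps$ small enough makes $\Phi$ a self-map of $B_\rho$. Contraction follows from the same estimates applied to differences, with contraction constants $O(\eps)$ from the $\text{R}_{\eps,\phi}$-term and $O(\rho)=O(\eps^5)$ from the quadratic one, also exploiting the Lipschitz dependence of $V_{\eps,\phi,U}$ on $U$ from Proposition \ref{propaux_1}. The resulting fixed point $U_{\eps,\phi}$ satisfies the orthogonality condition and the norm bound. The Lipschitz estimate in $\phi$ follows by subtracting the fixed-point equations for $\phi_1,\phi_2$, using the smooth dependence of $\tilde v_{\eps,\phi}$, $p_\phi$ and $\text{R}_{\eps,\phi}$ on $\phi$ together with the Lipschitz estimate on $V_{\eps,\phi,\cdot}$ in $\phi$ from Proposition \ref{propaux_1}.

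The hardest step, I expect, will be the invertibility of $\mathcal{L}^2$ with quantitative bounds in the weighted Hölder space $D^{4,\alpha}_{T,\delta}$, whose weight $\psi_\delta$ depends only on $x_2$ while $\mathcal{L}$ is natural in the Fermi coordinates $(s,t)$ adapted to $\gamma_T$. Since the amplitude of $\gamma_T$ in $x_2$ can be comparable to $T=\bar T/\eps$, translating normal-$t$ decay into $x_2$-decay for the weight requires a geometric argument based on the controlled oscillation of $\gamma_T$ (available from the explicit formula for $k$) together with the Green-function decay rate $\sqrt{2}>\delta$ of $L_\star$. Making this compatibility uniform in $\eps$, and thereby obtaining the key estimate $\|\mathcal{K}\|_{D^{0,\alpha}_\delta \to D^{4,\alpha}_\delta} \le C$ independent of $\eps$, is the main technical burden behind the whole argument.
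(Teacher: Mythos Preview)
Your overall architecture matches the paper's: construct a bounded inverse $G_\eps$ (your $\mathcal{K}$) of $\mathcal{L}^2$ on the $v_0'$-orthogonal subspace of $D^{4,\alpha}_{T,\delta}$, then run a contraction on a ball of radius $C\eps^5$, estimating the error, linear, quadratic and $M_{\eps,\phi}$ terms exactly as you indicate. The fixed-point half of your proposal is essentially identical to the paper's Section~5.2.2.

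Where you diverge is in the linear step. The paper does not Fourier-decompose in $s$; instead it first proves an a~priori estimate $\|U\|_{D^{2,\alpha}_\delta}\le c\|f\|_{D^{0,\alpha}_\delta}$ for $\mathcal{L}U=f$ by a contradiction/blow-up argument (taking sequences $\eps_n\to 0$, translating, and invoking the classification of bounded entire solutions of $-\Delta U+W''(v_0)U=0$ together with the orthogonality to kill the limit), and then produces existence by Lax--Milgram on the strip $(-T,T)\times\R$ with Neumann data, extended by periodicity, with decay in $t$ obtained via a barrier $\lambda e^{-\delta|t|}+\sigma e^{\delta|t|}$. Iterating once gives $G_\eps=\mathcal{L}^{-2}$. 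Your Fourier/fibrewise-coercivity route is a legitimate alternative and arguably more transparent for the $L^2$ part, but you will still need Schauder-type work to pass to the weighted H\"older scale; the paper's compactness argument bypasses this by proving the weighted $L^\infty$ bound directly.

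Your final paragraph, however, is based on a misreading that would send you on a long detour. The weight $\psi_\delta$ defining $D^{n,\alpha}_{T,\delta}$ is a function of the \emph{second Fermi coordinate} $t$, not of the Euclidean $x_2$: the paper uses $(x_1,x_2)$ as generic labels for $\R^2$, and in the context of the auxiliary problem these are $(s,t)$ (see the explicit remark in the proof of the a~priori lemma that ``$\psi_\delta$ is a function of the $t$-variable'', and the period $T$ rather than $L$ in the symmetry condition). So there is no geometric conversion from $t$-decay to $x_2$-decay to carry out; the operator $\mathcal{L}$ and the weight live in the same coordinates, and the uniform-in-$\eps$ bound on $\mathcal{K}$ follows directly from the spectral gap of $L_\star$ on $(v_0')^\perp$ and standard elliptic estimates. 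Also note the periodicity relevant here is $T$ in $s$, not $L$ in $x_1$.
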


\subsection{The bifurcation equation}\label{ss:bif}

Using the notation in the previous subsection (see in particular the discussion before Proposition \ref{propaux_2}), the Cahn-Hilliard equation 
reduces to 
$$
  \mathcal{L}^{2}U=-\chi_4 F(\tilde{v}_{\varepsilon,\phi})-\text{T}(U,V_{\varepsilon,\phi,U},\phi). 
$$
Recalling (\ref{proj_prob}), in order to conclude the proof it remains to solve the bifurcation equation 
\begin{equation}
p_{\phi}(s)=0  \qquad \qquad \hbox{ for all } s\in\R
\label{eq_bifo}
\end{equation}
with respect to $\phi$, where $p_\phi$ is the projection of the right-hand side of equation (\ref{eq_aux2}) along $v'_0$ (see (\ref{defp}) and (\ref{defT})). Since the Cahn-Hilliard functional is related via Gamma convergence to the Willmore's, 
the principal part of the bifurcation equation turns out to be the linearized Willmore's, appearing in the second variation of the Willmore energy.  
Recalling (33) from \cite{LMS},  on a hypersurface $\Sigma$ the latter second variation is given by
\begin{eqnarray}\notag
\mathcal{W}^{''}(\Sigma)[\phi,\psi]=\int_{\Sigma}(\tilde{L}_{0}\phi) \psi \, d\sigma,
\end{eqnarray}
where $d\sigma$ is the area form and $\tilde{L}_{0}$ is the self-adjoint operator given by
\begin{eqnarray}\notag
\tilde{L}_{0}\phi=L_{0}^{2}\phi+\frac{3}{2}H^{2}L_{0}\phi-H(\nabla_{\Sigma}\phi,\nabla_{\Sigma}H)+2(A\nabla_{\Sigma}\phi,\nabla_{\Sigma}H)+\\\notag
2H \langle A,\nabla^{2}\phi \rangle +\phi(2 \langle A,\nabla^{2}H \rangle +|\nabla_{\Sigma}H|^{2}+2H\tr A^{3}).
\end{eqnarray}
Here, $L_0\phi=-\Delta_\Sigma \phi-|A|^2\phi$ is the Jacobi operator (related to the second variation of the area functional), $A$ is the second fundamental form, $H$ is the mean curvature and $\tr A^3$ is the trace of $A^3$. Recalling \eqref{eq:Will} and \eqref{eq:cons}, on planar curves  
%
$\tilde{L}_{0}$ can be written as  
\begin{eqnarray}\notag
\tilde{L}_{0}\phi=\phi^{(4)}+\frac{5}{2}(\phi^{'}k^{2})^{'}+(3-\frac{5}{4}k^{4})\phi=\\\notag
\phi^{(4)}+\frac{5}{2}k^{2}\phi^{''}+5kk^{'}\phi^{'}+(3-\frac{5}{4}k^{4})\phi.
\end{eqnarray}
\begin{lemma}
Recalling the definition of the constants (see \eqref{eq:cstar} and \eqref{eq:dstar}) 
$$
   c_\star:=\int_\R (v_0^{'})^2 dt>0, \qquad \qquad d_\star:=\int_\R t^2(v_0^{'})^2 dt>0, 
$$ 
the bifurcation equation can be written in the form
\begin{eqnarray}\notag
\eps^{-1}c_\star\tilde{L}_0\phi=c_\star\tilde{L}_0(h+\psi)=d_\star\overline{g}+\eps \, \mathcal{G}_\eps(\psi), 
\end{eqnarray}
where $\mathcal{G}_\eps$ satisfies estimates similar to \eqref{small_remainder_pr}.\label{lemma_bifo}
\end{lemma}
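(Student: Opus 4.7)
The plan is to project the bifurcation identity $p_\phi(s)=0$ onto $v_0'$, read off the leading $\varepsilon^5$ balance, and recognize $c_\star \tilde L_0$ on one side and $d_\star\bar g$ on the other. From \eqref{defp}, multiplying by $c_\star$ and inserting the expansion \eqref{Ftildev} for $F(\tilde v_{\varepsilon,\phi})$, the equation $c_\star p_\phi(s)=0$ takes the schematic form
\begin{equation*}
  \varepsilon^5\bigl[\mathcal I_{\mathrm{lin}}(s) + \mathcal I_{\mathrm{quad}}(s) + \mathcal I_{E_5}(s)\bigr] + \varepsilon^6\sum_{j=1}^{3}G^j_\varepsilon(\psi)(s) + \int_\R \text{T}(U_{\varepsilon,\phi},V_{\varepsilon,\phi,U_{\varepsilon,\phi}},\phi)(s,t)\, v_0'(t)\,dt = 0,
\end{equation*}
where $\mathcal I_{\mathrm{lin}}$, $\mathcal I_{\mathrm{quad}}$, $\mathcal I_{E_5}$ denote, respectively, the projections against $v_0'$ of the linear-in-$(h_\star,\psi_\star)$ terms, the quadratic term $2k(h_\star'+\psi_\star')^2\bigl(2v_0'''-tL_\star v_0''\bigr)$, and $E_5(\varepsilon s,t)$.

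The first piece is handled directly by \eqref{pr_tildeL0}: it produces $-c_\star\tilde L_0(h_\star+\psi_\star)$ modulo a correction of the form $-c_\star(\tilde L_0-\tfrac{d^4}{ds^4})(h_\star-h+\psi_\star-\psi)$, which involves only up to second derivatives of the smoothing defect. For the quadratic piece I would carry out a short calculation: integration by parts using $L_\star v_0'=0$ (so $v_0'''=W''(v_0)v_0'$) gives $\int 2v_0'''v_0'\,dt=-2\int (v_0'')^2\,dt$, while self-adjointness of $L_\star$ together with the identity $L_\star(tv_0')=-2v_0''$ yields $\int t\,L_\star v_0''\cdot v_0'\,dt=-2\int (v_0'')^2\,dt$. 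The two contributions cancel, so $\mathcal I_{\mathrm{quad}}\equiv 0$; this is the reason why the quadratic-in-$\phi$ term present at order $\varepsilon^5$ does not obstruct the linear structure of the bifurcation equation.

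The heart of the matter is the identity
\begin{equation*}
  \mathcal I_{E_5}(s) = d_\star\,\bar g(\varepsilon s) = d_\star\Bigl(\tfrac{9}{8}\,k^5 - 9\,k(k')^2\Bigr)(\varepsilon s).
\end{equation*}
I would split $E_5$ according to its two scalar prefactors $k^5$ and $k(k')^2$ and integrate each summand against $v_0'$, using repeatedly: the kernel relation $L_\star v_0'=0$; the defining ODEs $L_\star \eta=\tfrac12 tv_0'$, $L_\star\tilde\eta=v_0''$, $L_\star^2\eta_1=2tv_0''+v_0'$, and the analogous identities for $\eta_2,\eta_3$; self-adjointness of $L_\star$ and the form \eqref{eq:Qvw} of $Q$; the parities in $t$; and the three elementary moments $\int (v_0')^2\,dt=c_\star$, $\int tv_0''v_0'\,dt=-\tfrac12c_\star$, $\int t^2(v_0')^2\,dt=d_\star$. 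This is the main obstacle—a lengthy but mechanical bookkeeping exercise in which the coefficients $\tfrac{9}{8}$ and $-9$ arise only after the full cancellation; it is precisely this identity that forced the choice of the right-hand side $\bar g$ in Proposition \ref{p:inv-rhs}, since the profiles $\eta_i$ built in Section \ref{s:app-sol} were engineered for this purpose.

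Finally I would repackage the remaining contributions into $\varepsilon\mathcal G_\varepsilon(\psi)$. The $\varepsilon^6$ terms $G^j_\varepsilon(\psi)$ obey \eqref{small_remainder_pr}, hence upon factoring out $\varepsilon^5$ they contribute $\varepsilon$ times a bounded and Lipschitz function of $\psi$. The $\text{T}$-projection is controlled through Propositions \ref{propaux_1} and \ref{propaux_2}, combined with the $O(\varepsilon)$ bound \eqref{defR}: since $V_{\varepsilon,\phi,U}$ is exponentially small and $U_{\varepsilon,\phi}$ is $O(\varepsilon^5)$, we obtain an $O(\varepsilon^6)$ estimate with matching Lipschitz dependence. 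The smoothing defects $(\tilde L_0-\tfrac{d^4}{ds^4})(h_\star-h+\psi_\star-\psi)$ are absorbed via \eqref{smooth3} with $\theta=1/\varepsilon$: since $h=\tfrac{d_\star}{c_\star}\bar\phi$ is smooth, $h_\star-h$ is $O(\varepsilon^N)$ in $C^{2,\alpha}$ for any $N$, while for $\psi\in C^{4,\alpha}$ the bound $\|\psi-\psi_\star\|_{C^{2,\alpha}}\leq c\varepsilon^2\|\psi\|_{C^{4,\alpha}}$ makes the second-order piece of the defect $O(\varepsilon^2)$, which after dividing by $\varepsilon$ contributes $O(\varepsilon)$ to $\mathcal G_\varepsilon(\psi)$. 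A final replacement of $(h_\star,\psi_\star)$ by $(h,\psi)$—legitimate because $\tilde L_0(h_\star-h+\psi_\star-\psi)$ is again small and Lipschitz in $\psi$—yields $c_\star\tilde L_0(h+\psi)=d_\star\bar g+\varepsilon\mathcal G_\varepsilon(\psi)$, and the identity $\phi=\varepsilon(h+\psi)$ gives $\varepsilon^{-1}c_\star\tilde L_0\phi=c_\star\tilde L_0(h+\psi)$, completing the proof.
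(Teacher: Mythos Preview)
Your proposal is correct and follows essentially the same route as the paper: project \eqref{Ftildev} against $v_0'$, use \eqref{pr_tildeL0} for the linear part, check that the quadratic-in-$\phi'$ projection vanishes, compute $\int_\R E_5 v_0'\,dt=d_\star\bar g$, and absorb the $G^j_\eps$, the $\text{T}$-projection (via Propositions~\ref{propaux_1}--\ref{propaux_2}), and the smoothing defects into $\eps\,\mathcal G_\eps$. Two small points worth tightening: you omit the exponentially small term $G^4_\eps(\psi)=\int_\R(\chi_4-1)F(\tilde v_{\eps,\phi})v_0'\,dt$ coming from the fact that \eqref{defp} involves $\chi_4 F$ rather than $F$; and in your ``final replacement'' step the quantity $\tilde L_0(h_\star-h+\psi_\star-\psi)$ is \emph{not} small (its fourth-order part gains nothing from \eqref{smooth3}), but this does not matter because the term $(h_\star-h+\psi_\star-\psi)^{(4)}v_0'$ already appears explicitly in \eqref{Ftildev}, so after projection the fourth-order pieces combine exactly to $-(h+\psi)^{(4)}$ and only the second-order remainder $(\tilde L_0-\frac{d^4}{ds^4})(h_\star-h+\psi_\star-\psi)$ survives, which is indeed $O(\eps^2)$.
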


\begin{proof}
In view of (\ref{Ftildev}) and (\ref{pr_tildeL0}) (see also Subsection \ref{ss:constr} for the definition of the $G^i_\eps$'s and for the 
Fourier-truncation $\phi \mapsto \phi_\star$) one has 
\begin{eqnarray}\notag
\int_\R  F(\tilde{v}_{\eps,\phi})(s,t)v_0^{'}(t)dt=-\eps^5 c_\star(\tilde{L}_0(h+\psi)+(\tilde{L}_0-\frac{d^4}{ds^4})(h_\star-h+\psi_\star-\psi))\\\notag
+\eps^5 \int_\R E_5(\eps s,t)v_0^{'}(t)dt+2 k \, \eps^5 (h_\star^{'}+\psi_\star^{'})^2+\eps^5\int_\R (2v_0^{'''}-tL_\star v_0^{''})v_0^{'}(t)dt\\\notag
+G^1_\eps(\psi)+G^2_\eps(\psi)+G^3_\eps(\psi)+G^4_\eps(\psi),
\end{eqnarray}
where $G^1_\eps(\psi),G^2_\eps(\psi),G^3_\eps(\psi)$ are defined in subsection \ref{ss:constr} and
$$
G^4_\eps(\psi)(s):=\int_\R (\chi_4(t)-1)F(\tilde{v}_{\eps,\phi})(s,t)v_0^{'}(t) dt
$$
is exponentially small in $\eps$, thus in particular it also satisfies the counterpart of \eqref{small_remainder_pr}. Integrating by parts, it is possible to see that the last term vanishes. By the properties of the smoothing operators (see (\ref{smooth3})), the term of order $\eps^5$ satisfies
\begin{eqnarray}\notag
\left\|(\tilde{L}_0-\frac{d^4}{ds^4})(h_\star-h+\psi_\star-\psi)\right\|_{C^{0,\alpha}(\R)}\leq c \, \eps^2||h+\psi||_{C^{4,\alpha}(\R)},
\end{eqnarray} 
since $\tilde{L}_0-\frac{d^4}{ds^4}$ is a second-order differential operator. It remains to deal with the contribution of the term involving $E_5$. We compute
\begin{eqnarray}\notag
c_\star:=\int_\R (v_0^{'})^2 dt=2\lim_{x\to\infty} \int_0^x (v_0^{'})^2(t) dt =\\\notag
2\lim_{x\to\infty} \frac{\left(3 \sinh \left(\frac{x}{\sqrt{2}}\right)+\sinh \left(\frac{3 x}{\sqrt{2}}\right)\right) \text{sech}^3\left(\frac{x}{\sqrt{2}}\right)}{6 \sqrt{2}}=\frac{2\sqrt{2}}{3},
\end{eqnarray}
and
\begin{eqnarray}\notag
d_\star:=\int_\R t^2(v_0^{'})^2dt=2\lim_{x\to\infty}\int_0^x t^2(v_0^{'})^2dt=\\\notag
2\lim_{x\to\infty}\frac{1}{18} \bigg(12 \sqrt{2} \text{Li}_2(-e^{-\sqrt{2} x})-6 \sqrt{2} x^2+6 \sqrt{2} x^2 \tanh (\frac{x}{\sqrt{2}}) \\\notag
- 24 x \log (e^{-\sqrt{2} x}+1)-6 \sqrt{2} \tanh (\frac{x}{\sqrt{2}}) \\\notag
+3 x (\sqrt{2} x \tanh (\frac{x}{\sqrt{2}})+2) \text{sech}^2(\frac{x}{\sqrt{2}})+\sqrt{2} \pi ^2\bigg)=\frac{\sqrt{2}}{9}(\pi^2-6).
\end{eqnarray}
Here $\text{Li}_2$ is the dilogarithmic function, defined as 
$$
 \text{Li}_2(x) = - \int_1^x \frac{\log t}{t-1} dt, 
$$
and satisfies 
$$
   \frac{d}{d x} \text{Li}_2\left(-e^{-\sqrt{2} x}\right) = \sqrt{2} \log \left(e^{-\sqrt{2} x}+1\right), 
   \qquad \hbox{ with } \quad \text{Li}_2\left(0\right) = 0, \, \text{Li}_2\left(-1\right) = -\frac{\pi ^2}{12}. 
$$
For these and further details about the dilogarithmic function, see for instance \cite{AS}, page $1004$. 

Now we deal with the projection of $E_5$. Integrating by parts, it is possible to see that
\begin{eqnarray}\notag
\int_\R t^3v_0^{''}v_{0}^{'}dt=-\frac{3}{2}d_\star;\\\notag
\int_\R \eta^{'} v_0^{'} dt=\frac{1}{4}d_\star;\\\notag
2\int_\R(L_\star\eta_3)'v_0^{'}dt=-2\int_\R L_\star\eta_3 v_0^{''}dt=\int_\R L_\star\eta_3 L_\star(tv_0^{'})dt;\\\notag
=\int_\R L_\star^2 \eta_3 tv_0^{'} dt=d_\star, 
\end{eqnarray}
and that 
\begin{eqnarray}\notag
k(k')^2\int_\R(2t(L_\star\eta_1)'-9t^2 v_0^{'}-4\eta^{'}-18 L_\star\eta_1)v_0^{'} dt=-9 k(k')^2 d_\star.
\end{eqnarray}
Moreover, since due to \eqref{eq:Qvw}
\begin{eqnarray}\notag
\int_\R Q(\eta,\eta)tv_0^{'} dt=-12\int_\R v_0 v_0^{''}\eta^2 dt +6\int_\R t^2(v_0^{'})^2 v_0 \eta dt,
\end{eqnarray}
we have
\begin{eqnarray}\notag
2\int_\R(L_\star\eta_2)'v_0^{'}=
\int_\R L_\star^2 \eta_2 tv_0^{'} dt=\\\notag
-\frac{3}{2}d_\star-\int_\R t\eta^{''}v_0^{'}dt+6\int_\R v_0 v_0^{''}\eta^2 dt
-3\int_\R t^2 (v_0^{'})^2\eta v_0 dt.
\end{eqnarray}
The quadratic term in $\eta$ gives
\begin{eqnarray}\notag
12\int_\R \eta\eta^{'} v_0^{'}v_0 dt+6\int_\R\eta^2 (v_0^{'})^2 dt=-6\int_\R v_0 v_0^{''} \eta^2 dt.
\end{eqnarray}
With a similar reasoning, the quadratic term containing $\eta$ and $\eta_1$ gives
\begin{eqnarray}\notag
\int_\R Q(\eta,\eta_1)v_0^{'} dt=-3\int_\R v_0 (v_0^{'})^2\eta t^2 dt+\frac{3d_\star}{c_\star}\int_\R v_0 (v_0^{'})^2\eta dt
+3\int_\R tv_0 (v_0^{'})^2\eta_1 dt.
\end{eqnarray}
Moreover,
\begin{eqnarray}\notag
\int_\R t(L_\star\eta_1)'v_0^{'} dt=-\frac{d_\star}{2}.
\end{eqnarray}
We note that
\begin{eqnarray}\notag
L_\star(tv_0 v_0^{'}/\sqrt{2})=v_0^{'''}+3tv_0(v_0^{'})^2,\\\notag
L_\star(tv_0^{'}(1+\sqrt{2}tv_0)/4)=tv_0^{'''}+\frac{3}{2}t^2 v_0(v_0^{'})^2,\\\notag
L_\star(v_0 v_0^{'}/3\sqrt{2})=v_0(v_0^{'})^2.
\end{eqnarray}
thus
\begin{eqnarray}
\int_\R \eta_1(v_0^{'''}+3tv_0(v_0^{'})^2) dt=\frac{1}{\sqrt{2}}\int_\R tv_0 v_0^{'}L_\star\eta_1 dt=-\frac{d_\star}{4},\label{int1}\\
\int_\R \eta (tv_0^{'''}+\frac{3}{2}t^2 v_0(v_0^{'})^2)dt=\frac{1}{4}\int_\R tv_0^{'}(1+\sqrt{2}tv_0)L_\star\eta dt=\frac{5}{16}d_\star,\label{int2}\\
\int_\R \eta v_0(v_0^{'})^2 dt=\frac{1}{3\sqrt{2}}\int_\R v_0 v_0^{'} L_\star\eta dt=\frac{1}{18\sqrt{2}}\label{int3}.
\end{eqnarray}
In order to prove (\ref{int1}), (\ref{int2}) and (\ref{int3}) we observe that, concerning the first integral 
\begin{eqnarray} \nonumber
    \frac{1}{\sqrt{2}}\int_0^x  t v_0 v_0^{'}L_\star\eta_1 dt &=&  \frac{1}{288} \left(-72 \sqrt{2} \text{Li}_2\left(-e^{-\sqrt{2} x}\right)-6 \sqrt{2} \left(\pi ^2-6 x^2\right) \right. \\ \nonumber &+& \left. 2 \sqrt{2} \left(-18 x^2+\pi ^2+12\right) \tanh \left(\frac{x}{\sqrt{2}}\right) \right. 
    \\ &+& \left. 3 x \left(6 x^2-\pi ^2+6\right) \text{sech}^4\left(\frac{x}{\sqrt{2}}\right) \right. 
    \\ &+& \left. \left(\sqrt{2} \left(-18 x^2+\pi ^2-6\right) \tanh \left(\frac{x}{\sqrt{2}}\right)-36 x\right) \nonumber \text{sech}^2\left(\frac{x}{\sqrt{2}}\right) \right. \\ &+& \left. 144 x \log \left(e^{-\sqrt{2} x}+1\right)\right). \nonumber
\end{eqnarray}
Concerning the second integral, one has indeed 
\begin{eqnarray}\notag
\frac{1}{4} \int_0^x t v_0^{'}(1+\sqrt{2}tv_0) L_\star \eta \, dt & = & \nonumber
\frac{1}{288} \left(60 \sqrt{2} \text{Li}_2\left(-e^{-\sqrt{2} x}\right)-9 x^3 \text{sech}^4\left(\frac{x}{\sqrt{2}}\right)\right. 
\\ \nonumber & +& \left. 5 \left(\sqrt{2} \left(\pi ^2-6 x^2\right)+6 \sqrt{2} \left(x^2-1\right) \tanh \left(\frac{x}{\sqrt{2}}\right)\right. \right. 
\\ &-& \left.\left.  24 x \log \left(e^{-\sqrt{2} x}+1\right)\right) \right.  \\ & +& \left.  15 x \left(\sqrt{2} x \tanh \left(\frac{x}{\sqrt{2}}\right)+2\right) \text{sech}^2\left(\frac{x}{\sqrt{2}}\right)\right). \nonumber 
\end{eqnarray}
For the third integral, one has that 
$$
  \frac{1}{3\sqrt{2}}\int_0^x v_0 v_0^{'} L_\star\eta \, dt = \frac{\left(-6 \sqrt{2} x+4 \sinh \left(\sqrt{2} x\right)+\sinh \left(2 \sqrt{2} x\right)\right) \text{sech}^4\left(\frac{x}{\sqrt{2}}\right)}{288 \sqrt{2}}. 
$$

Taking the sum, we get
\begin{eqnarray}
\int_\R E_5(s,t)v_0^{'}(t)dt=\frac{9}{8}d_\star k^5(s)-9k(k')^2(s) d_\star=d_\star \overline{g}.
\end{eqnarray}

To conclude the proof we observe that, thanks to Propositions \ref{propaux_1} and \ref{propaux_2}, 
$$
G^5_\eps(\psi)(s):=\eps^{-6}\int_\R \text{T}(U_\phi,V_{\eps,\phi,U},\phi)(\eps^{-1}s,t)v_0^{'}(t) dt
$$
satisfies (\ref{small_remainder_pr}). Therefore, if we set 
\begin{eqnarray}\label{eq:calGeps}
\mathcal{G}_\eps(\psi):=\eps^{-2}(\frac{d^4}{ds^4}-\tilde{L}_0)(h_\star-h+\psi_\star-\psi)+G^1_\eps(\psi)+G^2_\eps(\psi)+G^3_\eps(\psi)
+G^4_\eps(\psi)+G^5_\eps(\psi), 
\end{eqnarray}
the Lemma is then proved. 
\end{proof}

\

\begin{proposition}
For $\eps>0$ small enough, the bifurcation equation (\ref{eq_bifo}) admits a solution $\phi\in C^{4,\alpha}_{\bar{T}}(\R)$, such that
\begin{eqnarray}\notag
\phi=\eps\frac{d_\star}{c_\star}\overline{\phi}+\eps \, \psi,
\end{eqnarray}
with $\psi\in C^{4,\alpha}_{\bar{T}}$ fulfilling $||\psi||_{C^{4,\alpha}(\R)}\leq c \, \eps$ for some constant $c>0$ 
and where $c_\star$, $d_\star$ are given by \eqref{eq:cstar} and \eqref{eq:dstar}.\label{prop_bifo}
\end{proposition}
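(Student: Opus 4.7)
The plan is to recast the bifurcation equation as a fixed point problem for $\psi$ in a small ball of $C^{4,\alpha}_{\bar{T}}(\R)$ via Banach's contraction principle. First I would substitute the ansatz $\phi = \varepsilon(h+\psi)$ with $h := \frac{d_\star}{c_\star}\bar\phi$. By Proposition \ref{p:inv-rhs}, $\tilde{L}_0 \bar\phi = \bar g$, hence $c_\star \tilde{L}_0 h = d_\star \bar g$. Combining this with Lemma \ref{lemma_bifo}, the equation $p_\phi = 0$ becomes
\begin{equation*}
  c_\star \tilde{L}_0 \psi = \varepsilon\, \mathcal{G}_\varepsilon(\psi).
\end{equation*}

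The second step is to invert $\tilde{L}_0$. Since the right-hand side $\mathcal{G}_\varepsilon(\psi)$, as defined in \eqref{eq:calGeps}, preserves the symmetries of the spaces $C^{n,\alpha}_{\bar{T}}(\R)$ (inherited from the smoothing operators $R_{1/\varepsilon}$, the symmetry structure of $\gamma_T$ baked into the $G^i_\varepsilon$, and the definition of $\tilde L_0-\frac{d^4}{ds^4}$), Proposition \ref{prop_lin_bifo} applies and yields a bounded inverse $\tilde L_0^{-1}: C^{0,\alpha}_{\bar{T}}(\R) \to C^{4,\alpha}_{\bar{T}}(\R)$. The equation is then equivalent to the fixed point equation
\begin{equation*}
  \psi = \mathcal{F}_\varepsilon(\psi) := \frac{\varepsilon}{c_\star}\, \tilde L_0^{-1}\bigl(\mathcal{G}_\varepsilon(\psi)\bigr).
\end{equation*}

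I would then apply Banach's fixed point theorem on the ball $B_c := \{\psi \in C^{4,\alpha}_{\bar{T}}(\R): \|\psi\|_{C^{4,\alpha}(\R)} \le c\,\varepsilon\}$ for a suitable constant $c>0$. The self-mapping property follows from the uniform bound $\|\mathcal{G}_\varepsilon(\psi)\|_{C^{0,\alpha}(\R)}\le C$ (which is a consequence of the estimates analogous to \eqref{small_remainder_pr} satisfied by each $G^i_\varepsilon$, together with the $O(\varepsilon^{-2}\cdot\varepsilon^{2})=O(1)$ bound on $\varepsilon^{-2}(\tfrac{d^4}{ds^4}-\tilde L_0)(h_\star-h+\psi_\star-\psi)$ coming from \eqref{smooth3} since $\tilde L_0 - \frac{d^4}{ds^4}$ is of order at most two): indeed
\begin{equation*}
  \|\mathcal{F}_\varepsilon(\psi)\|_{C^{4,\alpha}(\R)} \le \frac{\varepsilon}{c_\star}\, \|\tilde L_0^{-1}\|\cdot C \le c\,\varepsilon,
\end{equation*}
provided $c$ is chosen sufficiently large. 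The contraction property follows analogously from the Lipschitz estimate for $\mathcal{G}_\varepsilon$, yielding $\|\mathcal{F}_\varepsilon(\psi_1)-\mathcal{F}_\varepsilon(\psi_2)\|_{C^{4,\alpha}(\R)} \le C'\varepsilon\, \|\psi_1-\psi_2\|_{C^{4,\alpha}(\R)}$, which is a strict contraction for $\varepsilon$ small.

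The main obstacle I anticipate is not the fixed point mechanism itself, which is standard, but ensuring that the error-type remainder $\mathcal{G}_\varepsilon(\psi)$ genuinely lies in $C^{0,\alpha}_{\bar{T}}(\R)$ so that Proposition \ref{prop_lin_bifo} is applicable, and verifying the uniform bound and Lipschitz estimate for the smoothing-operator contribution $\varepsilon^{-2}(\tfrac{d^4}{ds^4}-\tilde L_0)(h_\star-h+\psi_\star-\psi)$. Both points reduce to careful bookkeeping: the symmetries are built into the construction of $\tilde v_{\varepsilon,\phi}$, $U_{\varepsilon,\phi}$ and $V_{\varepsilon,\phi,U}$ in Section \ref{s:app-sol} and Propositions \ref{propaux_1}--\ref{propaux_2}, while the smoothing estimates \eqref{smooth1}--\eqref{smooth3} deliver a gain of two derivatives that exactly compensates the $\varepsilon^{-2}$ prefactor. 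Once these are in place, the fixed point $\psi$ produced by the contraction mapping has norm $O(\varepsilon)$, and $\phi = \varepsilon\frac{d_\star}{c_\star}\bar\phi + \varepsilon\psi$ solves \eqref{eq_bifo} as required.
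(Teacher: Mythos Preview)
Your proposal is correct and follows essentially the same approach as the paper's proof: set $h=\frac{d_\star}{c_\star}\bar\phi$ to cancel the leading term via Proposition~\ref{p:inv-rhs}, then invoke Proposition~\ref{prop_lin_bifo} to invert $\tilde L_0$ and run a contraction mapping argument on $\psi$ in a ball of radius $O(\eps)$, using the uniform and Lipschitz bounds on $\mathcal G_\eps$ that stem from \eqref{small_remainder_pr} and the smoothing estimates \eqref{smooth1}--\eqref{smooth3}. Your write-up is in fact somewhat more explicit than the paper's about the self-mapping and contraction steps and about why the symmetry class is preserved, but the logical skeleton is identical.
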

\begin{proof}
We recall that we look for a solution of the form $\phi=\eps(h+\psi)$ and, by Lemma \ref{lemma_bifo}, the bifurcation equation can be written in the form
\begin{eqnarray}\notag
\tilde{L}_0(h+\psi)=\overline{g}+\eps \, \mathcal{G}_\eps(\psi),
\end{eqnarray}
where $\mathcal{G}_\eps$ is given by \eqref{eq:calGeps}. 
In order to solve it, first we set $h:=\frac{d_\star}{c_\star}\overline{\phi}$, in such a way that
\begin{eqnarray}\notag
\tilde{L}_0 h=\frac{d_\star}{c_\star}\overline{g},
\end{eqnarray}
(see Proposition \ref{p:inv-rhs}), then we treat the fixed point problem
\begin{eqnarray}\notag
\tilde{L}_0\psi=\eps\, \mathcal{G}_\eps(\psi),
\end{eqnarray}
using the inverse of $\tilde{L}_0$ constructed in Proposition \ref{prop_lin_bifo}. In order to apply the contraction mapping theorem, we need to prove the Lipschitz character of $\mathcal{G}_\eps$. This follows from the definitions of $p_\phi$ and T (see (\ref{defp}) and (\ref{defT})), the Lipschitz regularity of $G^i_\eps$, $i=1,\dots,5$ (they all meet \eqref{small_remainder_pr}), which follows from property (\ref{small_remainder}), satisfied by $F^1_\eps, \, F^2_\eps$ and $F^3_\eps$ and the Lipschitz dependence of $U$ and $V$ on the datum $\phi$ (see Propositions \ref{propaux_2} and \ref{propaux_1}).
\end{proof}

\subsection{Proof of Theorem \ref{t:ex}} 

Thanks to the results in the previous subsections, proving existence of a symmetric solution to \eqref{CH_mult}, 
we only need to prove \eqref{eq:approx-monot}. By the symmetries of $u_T$, we can reduce ourselves to study the sign of $\partial_{x_2}u$ in the strip $\{0\leq x_1\leq (\gamma_T)_1(T/4)\}$.

Before proceeding, similarly to \eqref{def_Z}, for any $x\in V_\eps$ (see \eqref{eq:Veps}) we set 
\begin{eqnarray}\notag
x=\tilde{Z}_\eps(s,z)=\gamma_T(s)+z \, \gamma_T^{'}(s)^\bot; \qquad \qquad \gamma_T(s) = \frac{1}{\eps} \gamma(\eps s). 
\end{eqnarray}
Since $\gamma_T(s) = \gamma'(\eps \, s)$, the latter formula becomes 
$$
  \tilde{Z}_\eps(s,z)= \frac{1}{\eps} \gamma(\eps s)+z \, \gamma^{'}(\eps \, s)^\bot.
$$
We would like to understand the inverse function, namely the dependence of $(t,z)$ on $(x_1, x_2)$, especially near the 
$x_1$-axis. We notice first that $\tilde{Z}_\eps(0,z) = (z,0)$, and that 
$$
  \left( \begin{matrix}
  \partial_{s} x_1 & \partial_{s} x_2 \\ 
  \partial_{z} x_1 & \partial_{z} x_2
  \end{matrix} \right) = \left( \begin{matrix}
    - (1 - \eps \, z \, k(\eps s)) \sin \int_0^{\eps s} k(\tau) d \tau  &   (1 - \eps \, z \, k(\eps s)) \cos  \int_0^{\eps s} k(\tau) d \tau \\ 
    \cos  \int_0^{\eps s} k(\tau) d \tau & \sin \int_0^{\eps s} k(\tau) d \tau 
    \end{matrix} \right). 
$$
Recalling that $k(0) = 0$ and  $k'(0) < 0$, differentiating the definition of $\tilde{Z}_\eps$ and taking the scalar product with $\gamma'(\eps s)^\bot$, it is easy to see that $\partial_{x_2}z=\gamma'_1(\eps s)$ in $V_\eps$, then near the origin one has, 
for $\overline{\delta} > 0$ small 
\begin{equation}\label{eq:dx2z}
  \partial_{x_2} z = \frac{|k'(0)|}{2} x_2^2 \eps^2 (1 + o_\eps(1)); \qquad \qquad x \in V_\eps, \quad |x_2| < \frac{\overline{\delta}}{\eps}. 
\end{equation}
%
%
%
%
%
After these preliminaries, we have  the following result. 

\begin{proposition}\label{p:approx-monot}
Let $\tilde{v}_{\eps,\phi}(s,t)$ be the approximate solution defined in \eqref{eq:approx-sol-final}. 
Then there exists a fixed constant $C$ such that 
$$
  \frac{\partial \tilde{v}_{\eps,\phi}}{\partial x_2} \geq - C \eps^3 \qquad \quad \hbox{ in } V_\eps. 
$$
\end{proposition}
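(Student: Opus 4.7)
The plan is to compute $\partial_{x_2}\tilde v_{\eps,\phi}$ directly in the Fermi coordinates $(s,t)$ associated to the smoothly tilted curve, exploit the explicit structure of the approximate solution to identify a non-negative leading part, and absorb the corrections using the special form of the tilt $\bar\phi$ from Proposition~\ref{p:inv-rhs}. First, I would invert the Jacobian of the map $(s,t)\mapsto \gamma_T(s)+(t+\phi_\star(\eps s))\gamma_T'(s)^\perp$ to obtain $\partial_{x_2}s=\cos A(\eps s)/\alpha_s$ and $\partial_{x_2}t=-\sin A(\eps s)-\eps\phi_\star'(\eps s)\cos A(\eps s)/\alpha_s$, where $A(u):=\int_0^u k(\tau)\,d\tau$ and $\alpha_s:=1-\eps k(\eps s)(t+\phi_\star(\eps s))$. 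Differentiating $\tilde v=v_0+v_1+\ldots+v_4$ and using that the smoothing estimates \eqref{smooth2}--\eqref{smooth3} imply $\phi-\phi_\star=O(\eps^5)$ in $C^{1,\alpha}$, one finds
\[
\partial_s\tilde v_{\eps,\phi}=O(\eps^3),\qquad \partial_t\tilde v_{\eps,\phi}=v_0'(t)-\eps^2 k(\eps s)^2\eta'(t)+O(\eps^3),
\]
with appropriate decay in $t$ (both from $\eta$ and from the remainders). Since $\psi_\star'=O(\eps)$ contributes only to the $O(\eps^3)$ remainder, the chain rule yields the decomposition
\[
\partial_{x_2}\tilde v_{\eps,\phi}=v_0'(t)\,B(\eps s,t)+\eps^2 k(\eps s)^2\eta'(t)\sin A(\eps s)+O(\eps^3),
\]
with $B(u,t):=-\sin A(u)-\eps^2 h_\star'(u)\cos A(u)/\alpha_s$.

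The positivity argument then hinges on two facts. First, the formula \eqref{eq:ks} together with the symmetries $k(-s)=-k(s)$ and $k(s+\bar T/2)=-k(s)$ yields $A(u)\in[-\pi,0]$ for every $u$, so $-\sin A\geq 0$ everywhere. Second, writing $h=(d_\star/c_\star)\Phi(k)$ as in Proposition~\ref{p:inv-rhs}, a term-by-term analysis of the series for $\Phi$, using the same asymptotics of Gamma function ratios as in the paper's proof of Proposition~\ref{p:inv-rhs}(i), shows that $\Phi'(z)\geq 0$ for every $|z|\leq\sqrt 2$. A direct inspection of the signs of $k'$ and $\cos A$ on each of the four intervals $(j\bar T/4,(j{+}1)\bar T/4)$ gives $k'(u)\cos A(u)\leq 0$, hence $h'(u)\cos A(u)\leq 0$ everywhere, so $B(u,t)\geq -O(\eps^5)$ uniformly. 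Moreover, the Taylor expansions $\sin A(u)=-(u-u_0)^2/2+O((u-u_0)^4)$ and $h'(u)=\mp 3\mu_1(d_\star/c_\star)(u-u_0)^2+O((u-u_0)^4)$ at each vertical point $u_0\in\{0,\bar T/2,\ldots\}$ sharpen this locally to $B(u,t)\geq c(u-u_0)^2-O(\eps^5)$.

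Finally, I would control the residual $\eps^2 k^2\eta'(t)\sin A$ using the exponential-times-polynomial decay $|\eta'(t)|\leq Cv_0'(t)(1+t^2)$ and the fact that $|t|\leq 1/(4\eps)$ on $V_\eps$, so that $\eps^2(1+t^2)=O(1)$. Near a vertical point $u_0$, one has $k^2|\sin A|=O((u-u_0)^4)$, which is dominated by $v_0'(t)B(u,t)\gtrsim v_0'(t)(u-u_0)^2$; away from vertical points, the uniform lower bound $B(u,t)\geq c>0$ absorbs the $O(\eps^2)$ correction. Combining, $v_0'(t)B(\eps s,t)+\eps^2 k(\eps s)^2\eta'(t)\sin A(\eps s)\geq -O(\eps^5)$ uniformly, and together with the $O(\eps^3)$ remainder this yields $\partial_{x_2}\tilde v_{\eps,\phi}\geq -C\eps^3$ in $V_\eps$. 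The main obstacle is the structural positivity claim $\Phi'(k)\geq 0$ on $[-\sqrt 2,\sqrt 2]$: without it, $B$ could acquire a negative contribution of order $\eps^2$ and the target bound would degrade to $-O(\eps^2)$, so this is precisely the point at which the \emph{special structure} of the Willmore equation enters the argument.
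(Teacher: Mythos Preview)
Your approach is correct and runs parallel to the paper's, but with a genuinely different structural ingredient. Both arguments compute $\partial_{x_2}\tilde v_{\eps,\phi}$ in Fermi coordinates and identify the leading contribution $v_0'(t)\,\partial_{x_2}t$, using $\partial_{x_2}z=-\sin A(\eps s)\ge 0$ together with $\phi=\eps\tfrac{d_\star}{c_\star}\bar\phi+O(\eps^2)$ in $C^{4,\alpha}$. The paper then splits $V_\eps$ into a region away from the vertical points, where $-\sin A\ge c>0$ dominates all $O(\eps^2)$ corrections, and a fixed neighbourhood $|\eps s|<\bar\delta$ of each vertical point, where it exploits only Proposition~\ref{p:inv-rhs}(ii): since $\bar\phi'(u)=O(u^2)$ near $u=0$, the tilt correction $\eps^2\bar\phi'(\eps s)\partial_{x_2}s$ is $O(\eps^2)$ times the positive leading term $\partial_{x_2}z\sim\tfrac12(\eps s)^2$, and is therefore absorbed. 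You instead prove a \emph{global} sign property, namely $\Phi'(z)\ge 0$ on $[-\sqrt2,\sqrt2]$ combined with $k'(u)\cos A(u)\le 0$, which gives $B(u,t)\ge 0$ everywhere up to $O(\eps^3)$ without any local/far dichotomy. Your claim $\Phi'\ge 0$ is indeed provable by the same log-convexity-of-$\Gamma$ argument used for Proposition~\ref{p:inv-rhs}(i) (not the Gamma-ratio asymptotics, which serve a different purpose in the Appendix); this is a nice strengthening of (ii) and makes the monotonicity analysis more transparent.

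Two minor points to tighten. First, the smoothing estimate gives $\|\phi-\phi_\star\|_{C^{1,\alpha}}\le C\eps^3\|\phi\|_{C^{4,\alpha}}\le C\eps^4$, not $O(\eps^5)$; this is harmless for your conclusion. Second, your treatment of the residual $\eps^2k^2\eta'(t)\sin A$ ``away from vertical points'' is slightly loose: since $\eps^2(1+t^2)$ is only $O(1)$ on $V_\eps$, the bound $|\eps^2k^2\eta'\sin A|\le C\eps^2(1+t^2)v_0'(t)|\sin A|$ is not uniformly $O(\eps^2)$ relative to $v_0'(t)B$. The fix is a short case split: for $\eps^2(1+t^2)\le c_0$ one has $v_0'(t)|\sin A|\bigl(1-C\eps^2(1+t^2)k^2\bigr)\ge 0$, while for $|t|\gtrsim 1/\eps$ both terms are $\le C\eps^{-2}e^{-c/\eps}\ll\eps^3$. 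With these adjustments your argument is complete and yields the same bound as the paper, trading the paper's local Taylor expansion at the vertical points for your global positivity of $\Phi'$.
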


\begin{proof}
Recall that in $V_\eps$ we defined 
$$
  \tilde{v}_{\eps,\phi}(s,t):=v_0(t)+v_{1,\eps,\phi}(s,t)+v_{2,\eps,\phi}(s,t)+v_{3,\eps,\phi}(s,t)+v_{4,\eps,\phi}(s,t). 
$$
We begin by estimating the $x_2$-derivative of the first term. Recalling that $t = z - \phi_\star(\eps s)$, we have 
$$
  \partial_{x_2}  v_0(t) = v'_0(t) \left[ \partial_{x_2} z - \eps \phi'_\star(\eps s) \partial_{x_2} s \right]. 
$$
Concerning the function $\phi'_\star$ we recall that by \eqref{smooth3}, for $\alpha \in (0,1)$ 
and $\theta = \frac{1}{\eps}$ one has 
$$
   \| \phi - \phi_\star \|_{C^{1,\alpha}} \leq C \eps^3 \|\phi\|_{C^{4,\alpha}} \leq C \eps^4. 
$$ 
Moreover, by Proposition \ref{prop_bifo} we had that 
$$
  \left\| \phi - \eps\frac{d_\star}{c_\star} \overline{\phi} \right\|_{C^{4,\alpha}} \leq C \eps^2. 
$$
The latter two formulas imply that near the origin 
$$
  \eps \phi'_\star(\eps s) \partial_{x_2} s \geq - C \eps^3, 
$$
and therefore that also near the $x_1$ axis, by \eqref{eq:dx2z}  
$$
  \partial_{x_2}  v_0(t) \geq - C \eps^3 +\frac{1}{2} x_2^2 \eps^2 (1 + o_\eps(1)).  
$$
Concerning instead $v_{1,\eps,\phi}$, defined in \eqref{eq:v1}, we have that 
$$
  \partial_{x_2} v_{1,\eps,\phi} = (v'_0(t + \phi_\star(\eps s) - \phi(\eps s)) - v'(t)) 
  \left[ \partial_{x_2} z - \eps \phi'_\star(\eps s) \partial_{x_2} s \right] 
  + \eps v'_0(t + \phi_\star(\eps s) - \phi(\eps s)) (\phi'_\star(\eps s) - \phi'(\eps s))\partial_{x_2}s.  
$$
This term can be estimated by 
$$
 C \| \phi_\star - \phi \|_{L^\infty} | \partial_{x_2} z - \eps \phi'_\star(\eps s) \partial_{x_2} s | 
 + C \eps \| \phi_\star - \phi \|_{C^{1,\alpha}}. 
$$
Using \eqref{smooth3} we can check that this term is of order $\eps^5$. 

We turn next to $v_{2,\eps,\phi}$, see \eqref{eq:v2}. The first summand in its definition 
is quite easy to treat. The terms $\eps^3 L \phi_\star (\eps s) \eta(t)$ and 
$\eps^2 \phi'_\star(\eps s)^2 \tilde{\eta}(t)$, involving the Fourier truncation $\phi_\star$ 
might seem more delicate. However, being $L$ of second order (see \eqref{eq:L}), using \eqref{smooth1} 
and recalling that $\|\phi\|_{C^{4,\alpha}} \leq c\, \eps$, one has that the $x_2$-derivative of both these terms 
is of order $\eps^4$. 

All other terms in $\tilde{v}_{\eps,\phi}$ can be estimated  easily, and it is also straightforward to show the monotonicity of 
$\tilde{v}_{\eps,\phi}$ in $x_2$ in $V_\eps$ for $|x_2| \geq \frac{\overline{\delta}}{\eps}$, since here 
$v_0(t)$ has $x_2$-derivative bounded away from zero. 
\end{proof}

\

\begin{pfn} {\em of Theorem \ref{t:ex} completed.}  
We notice that the solution $u_T$ is obtained by multiplying $\tilde{v}_{\eps,\phi}$ by a cut-off function (not 
identically equal to 1 in a region where $\tilde{v}_{\eps,\phi}$ is exponentially small in $\eps$)
and by adding a correction $w$ which is of order $\eps^5$ in $C^1$ norm, see the beginning of Section \ref{s:red}. Then \eqref{eq:approx-monot} follows from Proposition \ref{p:approx-monot}. 

The weighted norm estimate on the correction $w$, the fact that $v'_0(t)$ has non zero gradient for $t$ 
close to zero, and the fact that the {\em tilting } $\phi$ is if order $\eps$ (see Proposition \ref{prop_bifo}) also imply \eqref{eq:dist} by a direct application 
of the implicit function theorem. 
\end{pfn}

\section{Proof of some technical results}\label{s:tech}

Here we collect the proofs of some technical results, most notably of Propositions \ref{propaux_1} and 
 \ref{propaux_2}.

\subsection{Proof of Proposition \ref{propaux_1}}\label{ss:5.1}

Our main strategy is the following: if $\Gamma_{\eps,\phi}$ is as in Remark \ref{rem_potential}, we first we study the linear equation
\begin{eqnarray}
(-\Delta+\Gamma_{\varepsilon,\phi})^{2}u=f&\text{in }\R^{2},
\label{lin_pb_f}
\end{eqnarray}
where $f$ is a fixed function with finite $C^{0,\alpha}_{L,\delta}(\R^{2})$ norm (see \eqref{eq:Cnad}), that is decaying away from the curve $\gamma_T$ 
at rate $e^{-\delta d(\cdotp,\gamma_T)}$, even and periodic with period $L = L_T$ in $x_{1}$ (see Section $4,2$). The aim is to construct a right inverse of the operator $(-\Delta+\Gamma_{\varepsilon,\phi})^{2}$, in order to solve equation (\ref{eq_aux1}) by a fixed point argument (see Subsection \ref{ss:bif}). 
In order to treat equation (\ref{eq_aux1}), we will endow the space $C^{n,\alpha}_{L,\delta}(\R^{2})$ with the norm introduced in (\ref{norm_1}).

\subsubsection{The linear problem}

In order to solve (\ref{lin_pb_f}), we first consider the second order equation
\begin{eqnarray}
-\Delta u+\Gamma_{\varepsilon,\phi} u=f&\text{in }\R^{2},
\label{lin_pb_f_2}
\end{eqnarray}
proving the following result (recall \eqref{def_per_spaces}). 
\begin{lemma}
  Let $f\in C^{0,\alpha}_{L,\delta}(\R^{2})$, $0<\delta<\sqrt{2}$. Then, for $\varepsilon$ small enough and
  $\phi\in C^{4,\alpha}_{T}(\R)$ with $||\phi||_{C^{4,\alpha}(\R)}<1$, equation (\ref{lin_pb_f_2}) admits a unique solution
  $u:=\tilde{\Psi}_{\varepsilon,\phi}(f)\in C^{2,\alpha}_{L,\delta}(\R^{2})$ satisfying
  $||u||_{C^{2,\alpha}_\delta(\R^2)}\leq c \, ||f||_{C^{0,\alpha}_\delta(\R^2)}$, for some constant $c>0$ independent of
  $\varepsilon$ and $\phi$. 
\label{prop_linpb_far}
\end{lemma}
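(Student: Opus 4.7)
I would proceed in three steps: solvability on bounded cylinders via Lax--Milgram; a pointwise weighted $L^\infty$ bound using $\varphi_{\eps,\delta}$ itself as a super-solution; and an upgrade to the weighted $C^{2,\alpha}$ estimate via interior Schauder. Write $L_{\eps,\phi} := -\Delta + \Gamma_{\eps,\phi}$ and $M := \|f\|_{C^{0,\alpha}_\delta(\R^2)}$. I use freely the uniform bounds $\delta^2 < \Gamma_{\eps,\phi} < 2$ from Remark~\ref{rem_potential}, together with a uniform spectral gap $\Gamma_{\eps,\phi} - \delta^2 \geq c_0 > 0$, which is obtained by applying that remark with some auxiliary $\delta' \in (\delta, \sqrt{2})$.

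On the bounded cylinders $\Omega_N := (\R/L\Z) \times [-N,N]$ with zero Dirichlet data, the bilinear form $a(u,v) = \int_{\Omega_N}(\nabla u \cdot \nabla v + \Gamma_{\eps,\phi} uv)$ is continuous and coercive on $H^1_0(\Omega_N)$, so Lax--Milgram yields unique weak solutions $u_N$; elliptic regularity with $\Gamma_{\eps,\phi} \in C^{4,\alpha}$ (inherited from $v_{\eps,\phi}$) promotes them to $C^{2,\alpha}(\overline{\Omega_N})$. The key identity is that $\varphi_{\eps,\delta}$ is almost a super-solution: since $G_\delta$ is the Green function of $-\Delta + \delta^2$, the line integral satisfies $(-\Delta + \delta^2)\varphi_{\eps,\delta} = 0$ pointwise off $\gamma_T$, hence
\[
L_{\eps,\phi}\, \varphi_{\eps,\delta} = (\Gamma_{\eps,\phi} - \delta^2)\varphi_{\eps,\delta} \geq c_0\, \varphi_{\eps,\delta} \qquad \text{off } \gamma_T.
\]
As $|f| \leq M\varphi_{\eps,\delta}$, the function $w := (M/c_0)\varphi_{\eps,\delta}$ satisfies $L_{\eps,\phi}w \geq |f|$ distributionally, the residual Dirac-type source on $\gamma_T$ having favorable sign. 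The weak maximum principle applied to $w \pm u_N$ (which are nonnegative on $\partial\Omega_N$) gives $|u_N| \leq (M/c_0)\varphi_{\eps,\delta}$ uniformly in $N$, and Arzel\`a--Ascoli (through the local Schauder bounds below) passes to a limit $u$ on $\R^2$ with $L_{\eps,\phi} u = f$ and $|u| \leq (M/c_0)\varphi_{\eps,\delta}$.

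Interior Schauder estimates on unit balls give
\[
\|u\|_{C^{2,\alpha}(B_1(x))} \leq C\bigl(\|u\|_{L^\infty(B_2(x))} + \|f\|_{C^{0,\alpha}(B_2(x))}\bigr) \leq C\, M\, \varphi_{\eps,\delta}(x),
\]
where the Schauder constant is independent of $\eps$ and $\phi$ since the $C^{0,\alpha}$ norms of $\Gamma_{\eps,\phi}$ on unit balls are uniformly controlled by Remark~\ref{rem_potential}. Dividing by $\varphi_{\eps,\delta}(x)$ and exploiting that $\varphi_{\eps,\delta}(y)/\varphi_{\eps,\delta}(x)$ lies in a fixed bounded interval for $y \in B_2(x)$---a consequence of the exponential decay of $G_\delta$ at rate $\delta$ and uniform convergence of the line integral defining $\varphi_{\eps,\delta}$---yields $\|u\|_{C^{2,\alpha}_\delta(\R^2)} \leq c\, M$ with $c$ independent of $\eps$ and $\phi$. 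Uniqueness in $C^{2,\alpha}_{L,\delta}(\R^2)$ is then an immediate consequence of the comparison bound applied with $f = 0$. Finally, the symmetries baked into $C^{2,\alpha}_{L,\delta}(\R^2)$ are preserved automatically: $\Gamma_{\eps,\phi}$ is invariant under both $\sigma_1: x \mapsto -x$ and $\sigma_2: x \mapsto (x_1 + L/2, -x_2)$, because $W''$ is even and $v_{\eps,\phi}$ changes sign under these Euclidean isometries, while $f$ is odd under each $\sigma_i$; thus $-u\circ\sigma_i$ solves the same equation as $u$, and uniqueness forces $u = -u\circ\sigma_i$.

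The main technical obstacle is the uniformity of constants as $\eps \to 0$, specifically the two-sided control of $\varphi_{\eps,\delta}$ on unit balls independently of $\eps$. This rests on the exponential decay of $G_\delta$ at the fixed rate $\delta$ and on the $L$-periodic arc-length parametrisation of $\gamma_T$, which together prevent any degeneracy of the weight as $T = \bar T/\eps \to \infty$.
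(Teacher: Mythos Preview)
Your argument is correct and reaches the same conclusion, but the route differs from the paper's in two places worth noting.

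For existence, the paper works on the half-period strip $(-L/2,L/2)\times\R$ with Neumann data, obtains a unique $H^1$ solution by Riesz representation, and then extends by even reflection after checking the symmetries. Your Dirichlet approximation on the periodic cylinders $(\R/L\Z)\times[-N,N]$ is arguably cleaner, since periodicity is built in and the passage $N\to\infty$ is driven directly by the barrier bound; nothing is lost.

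For the weighted $L^\infty$ bound the paper does not use $\varphi_{\eps,\delta}$ alone as a super-solution. Instead it first gets an unweighted $L^\infty$ bound, then builds the two-sided barrier $\lambda\varphi_{\eps,\delta}+\tau\varphi_{\eps,\delta}^{-1}$ on the annular region $\{\rho<d(x,\gamma_T)<R\}$, and finally runs a separate maximum/blow-up argument on the conjugated function $\tilde u = u\varphi_{\eps,\delta}^{-1}$ to obtain the sharp $L^\infty$ estimate with constant independent of $\eps$. Your observation that $(-\Delta+\delta^2)\varphi_{\eps,\delta}$ equals the arc-length measure on $\gamma_T$, so that $(M/c_0)\varphi_{\eps,\delta}$ is already a global super-solution, collapses these three steps into one; this is a genuine simplification.

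For the final Schauder upgrade the paper applies interior estimates directly to the equation satisfied by $\tilde u=u\varphi_{\eps,\delta}^{-1}$, whose coefficients involve $\nabla\varphi_{\eps,\delta}/\varphi_{\eps,\delta}$ and are bounded by $\delta$. Your version, which estimates $u$ on balls and then divides, is equivalent but implicitly relies on the same control of $\nabla\varphi_{\eps,\delta}/\varphi_{\eps,\delta}$ (and of second derivatives) to justify the uniform ratio bound; the paper's conjugated formulation makes that dependence explicit. Either way, the uniformity in $\eps$ ultimately rests on $|\nabla\varphi_{\eps,\delta}|\le\delta\,\varphi_{\eps,\delta}$, which both proofs take as a known property of the weight.
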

\begin{proof}
\textit{Step (i): existence, uniqueness and local H$\ddot{o}$lder regularity on a strip.}\\

Recalling that $L$ is the $x_1$-period of $\gamma_{T}$, define the strip 
$$
 S = (-L/2,L/2)\times\R. 
$$
First we look for a solution to the Neumann problem
\begin{eqnarray}
\begin{cases}
-\Delta u+\Gamma_{\varepsilon,\phi} u=f&\text{in }S; \\
\partial_{x_{1}}u=0 &\text{on }\partial S,
\label{neumann_pb}
\end{cases}
\end{eqnarray}
and then we will extend it by periodicity to the whole $\R^{2}$. By definition, $w\in H^{1}(S)$ is a weak solution to problem (\ref{neumann_pb}) if 
\begin{eqnarray}
\int_{S} \langle \nabla w,\nabla v \rangle dx+\int_{S}\Gamma_{\varepsilon,\phi}wvdx=\int_{S}fvdx, &\text{for any }v\in H^{1}(S).
\label{weak_sol}
\end{eqnarray}
Existence and uniqueness of such a solution follow from the Riesz representation theorem (see also Remark \ref{rem_potential}). 
Since $f\in C^{0,\alpha}(\overline{S})$, it follows that $w\in C^{2,\alpha}(\overline{S})$. 
Moreover, choosing an arbitrary test function $v\in C^{1}(\overline{S})$ and applying the divergence theorem, we can see that 
\begin{eqnarray}\notag
\int_{S}v(-\Delta w+\Gamma_{\varepsilon,\phi}w)dx+\int_{\partial S}\partial_{x_{1}}wv\,dx_{1}=\int_{S}fv \, dx.
\end{eqnarray}
Taking $v\in C^{\infty}_{0}(S)$, the PDE is satisfied in the classical sense. Taking once
again $v\in C^{1}(\overline{S})$, we have that also $\partial_{x_{1}}w=0$ on $\partial S$. \\

\textit{Step (ii): Symmetry and extension to an entire solution}\\

By the symmetries of the Laplacian and the uniqueness of the solution, if $f$ is even in $x_{1}$ then the same is true for $w$, thus $w(-L/2,x_{2})=w(L/2,x_{2})$ and $\partial^{2}_{x_{1}}w(-L/2,x_{2})=\partial^{2}_{x_{1}}w(L/2,x_{2})$. As a consequence, it is possible to extend $w$ by periodicity to an entire solution $u\in C^{2,\alpha}(\R^{2})$.\\

\textit{Step (iii): $u\in L^{\infty}(\R^{2})$.}\\

By elliptic estimates and the Sobolev embeddings 
\begin{eqnarray}\notag
||u||_{L^{\infty}(B_{1}(x))}\leq c||u||_{W^{2,2}(B_{1}(x))}\leq c\,(||u||_{L^{2}(B_{2}(x))}+||f||_{L^{2}(B_{2}(x))})\\\notag
\leq c\,(||w||_{L^{2}(S)}+||f||_{L^{\infty}(\R^{2})})<\infty
\end{eqnarray}
for any $x\in\R^{2}$, thus $u\in L^{\infty}(\R^{2})$. \\



\textit{Step (iv): Decay of the solution: $u\varphi_{\eps,\delta}^{-1}\in L^{\infty}(\R^{2})$ (see \eqref{eq:G-delta} for 
the definition of $\varphi_{\eps,\delta}$), $0<\delta<\sqrt{2}$.}\\

For suitable constants $\lambda, \tau > 0$, we will use the function $\lambda\varphi+\tau\varphi^{-1}$ as a barrier, where we have set $\varphi:=\varphi_{\eps,\delta}$. More precisely, we fix $\rho>0$ and $z\in\R^{2}$ with $d(z,\gamma_T)>\rho$. Then we fix $\tau>0$ small and $R>|d(z,\gamma_T)|$. Therefore $u$ fulfils
\begin{eqnarray}\notag
u(x)<||u||_{L^{\infty}(\R^{2})}<\lambda\varphi(x)<\lambda\varphi(x)+\tau\varphi^{-1}(x)
\end{eqnarray}
if $d(x,\gamma_T)=\rho$, provided $\lambda>||u||_{L^{\infty}(\R^{2})}\sup_{d(x,\gamma_\eps)=\rho}\varphi^{-1}>0$. Furthermore
\begin{eqnarray}\notag
u<||u||_{L^{\infty}(\R^{2})}<\tau\varphi^{-1}<\lambda\varphi+\tau\varphi^{-1}
\end{eqnarray}
if $d(x,\gamma_T)=R$, provided $R$ is large enough. Moreover,
\begin{eqnarray}\notag
(-\Delta+\Gamma_{\varepsilon,\phi})(u-(\lambda\varphi+\tau\varphi^{-1}))\leq\\\notag (c-\lambda(\Gamma_{\varepsilon,\phi}-\delta^2))\varphi\\\notag
-\tau\varphi^{-1}\left\{\Gamma_{\varepsilon,\phi}+\delta^2-2\frac{|\nabla\varphi|^2}{\varphi^2}\right\}<0 &\text{for $x\in\Omega$,}
\end{eqnarray}
where $\Omega:=\{x:\rho<d(x,\gamma_T)<R\}$, if $\lambda$ is large enough. We observe that, if we fix $0<\beta<W^{''}(1)-\delta^2$, then, for  $\varepsilon$ small enough, 
\begin{eqnarray}
\Gamma_{\varepsilon,\phi}-\delta^2=(\chi_{1}-1)(W^{''}(1)-W^{''}(\tilde{v}_{\varepsilon,\phi}))+W^{''}(1)-\delta^2>W^{''}(1)-\delta^2-\beta>0.
\label{Gamma-delta2>0}
\end{eqnarray}
Thus the function $c/(\Gamma_{\varepsilon,\phi}-\delta^2)$ is bounded from above, therefore we can take $\lambda>\sup_{x\in \Omega}c/(\Gamma_{\varepsilon,\phi}-\delta^2)$. The term multiplying $-\tau\varphi^{-1}$ is positive, due to the estimate $|\nabla\varphi|^2/\varphi^2\leq \delta^2$ and (\ref{Gamma-delta2>0}). Therefore, by the maximum principle we get that $u(z)<\lambda\varphi+\tau \varphi^{-1}$, in the complement of the region $\{|t|\leq\rho\}$ and for any $\tau>0$. In the same way, one can prove that $u(z)>-\lambda\varphi-\tau\varphi^{-1}$. Letting $\sigma\to 0$, we get that $u\varphi^{-1}\in L^{\infty}(\R^{2})$.\\ 

\textit{Step (v): estimate of the $L^{\infty}$-norm of $u \, \varphi_{\eps,\delta}^{-1}$.} \\

Let us set $\tilde{u}:=u\varphi_{\eps,\delta}^{-1}$ and $\tilde f:=f\varphi_{\eps,\delta}^{-1}$. It is possible to possible to show that
\begin{eqnarray}
(-\Delta+\Gamma_{\eps,\phi})\tilde{u}=\tilde{f}-2(\nabla u,\nabla\varphi^{-1})-u\Delta\varphi^{-1}=\label{eq_tildeu}\\\notag
\tilde{f}-2\varphi(\nabla\tilde{u},\nabla\varphi^{-1})+\tilde{u}\left(2\frac{|\nabla\varphi|^2}{\varphi^2}-\varphi\Delta\varphi^{-1}\right)=
\tilde{f}-2\varphi(\nabla\tilde{u},\nabla\varphi^{-1})+\delta^2\tilde{u}, 
\end{eqnarray}
(once again, we have set $\varphi:=\varphi_{\eps,\delta}$ in the above computation). First we assume that there exists a point $y\in\R^{2}$ such that $|\tilde{u}(y)|=\max_{x\in\R^{2}}|\tilde{u}(x)|$. If $\tilde{u}(y)>0$, then $y$ is a maximum point, thus $\nabla\tilde{u}(y)=0$ and
\begin{eqnarray}\notag
(\Gamma_{\varepsilon,\phi}(y)-\delta^2)\tilde{u}(y)\\\notag
\leq-\Delta\tilde{u}(y)+(\Gamma_{\varepsilon,\phi}(y)-\delta^2)\tilde{u}(y)=\tilde{f}(y),
\end{eqnarray}
and therefore
\begin{eqnarray}\notag
||\tilde{u}||_{L^{\infty}(\R^{2})}\leq c\,||\tilde{f}||_{L^{\infty}(\R^{2})}.
\end{eqnarray}
A similar argument shows that the same estimate is true if $\tilde{u}(y)<0$ (a minimum point).\\

If the maximum is not achieved, then there exists a sequence $(x_k)_k \subset\R^2$ such that $|\tilde{u}(x_k)|\to\sup_{x\in\R^2}|\tilde{u}(x)|$.
Since we have periodicity in the $x_1$-variable, we can assume that the $x_2$-component of $x_k$ tends to infinity in 
absolute value.  Then we define $\tilde{u}_k(x):=\tilde{u}(x+x_k)$. Up to a subsequence, $\tilde{u}_k\to w$ in $C^2_{loc}(\R^2)$, 
$\tilde{f}(\cdot + x_k) \to \hat{f}$ in $C^2_{loc}(\R^2)$ and, recalling \eqref{eq:G-delta}, it can be shown that 
$\varphi \simeq e^{- \delta |x_2|}$ for large $|x_2|$ and hence 
$\varphi(\cdot + x_k) 
\nabla \varphi(\cdot + x_k)^{-1} \to \mp \delta \, {\bf e_2}$  if $(x_k)_2 \to \pm \infty$. The limit $w$ solves 
\begin{eqnarray}\notag
-\Delta w+ W''(1) w =\hat{f} \mp 2 \delta \partial_{x_2} w + \delta^2 w  &\text{in $\R^2$.}
\end{eqnarray}
Moreover, $|w(0)|=||w||_{L^\infty(\R^2)}\leq||\tilde{f}||_{L^\infty(\R^2)}$. As a consequence, since $W''(1) = 2$ and $\delta^2 < 2$, we have 
\begin{eqnarray}\notag
||\tilde{u}||_{L^\infty(\R^2)}=||w||_{L^\infty(\R^2)}\leq||\tilde{f}||_{L^\infty(\R^2)}.
\end{eqnarray}

\

\textit{Step (vi): Decay of the derivatives.}\\

By (\ref{eq_tildeu}), step \textit{(v)} and \cite{GT} (Chapter $6.1$, Corollary $6.3$),
\begin{eqnarray}\notag
||\tilde{u}||_{C^{2,\alpha}(B_{1}(x))}\leq c\,(||\tilde{u}||_{L^{\infty}(\R^{2})}+||\tilde{f}||_{C^{0,\alpha}(\R^{2})})\leq c\,||\tilde{f}||_{C^{0,\alpha}(\R^{2})}<\infty,
\end{eqnarray}
for any $x\in\R^2$, thus $u\in C^{2,\alpha}_{\delta}(\R^{2})$ and
\begin{eqnarray}\notag
||\tilde{u}||_{C^{2,\alpha}(\R^2)}\leq c\,||\tilde{f}||_{C^{0,\alpha}(\R^2)}.
\end{eqnarray}
This concludes the proof.
\end{proof}

\

\begin{lemma}
Let $f\in C^{0,\alpha}_{L,\delta}(\R^{2})$, with $0<\delta<\sqrt{2}$. Then, for $\varepsilon$ small enough and $\phi\in C^{4,\alpha}_{T}(\R)$ with $||\phi||_{C^{4,\alpha}(\R)}<1$, equation (\ref{lin_pb_f}) admits a unique solution $V:=\Psi_{\varepsilon,\phi}(f)\in C^{4,\alpha}_{L,\delta}(\R^{2})$ satisfying the estimate $||V||_{C^{4,\alpha}_\delta(\R^{2})}\leq c\,||f||_{C^{0,\alpha}_\delta(\R^{2})}$ for some constant $c>0$ independent of $\varepsilon$ and $\phi$.
\label{prop_linpb_far_4}
\end{lemma}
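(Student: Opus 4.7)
The plan is to obtain the fourth-order inverse by iterating the second-order inverse $\tilde\Psi_{\varepsilon,\phi}$ constructed in Lemma \ref{prop_linpb_far}. Given $f\in C^{0,\alpha}_{L,\delta}(\R^2)$, first I would set $u:=\tilde\Psi_{\varepsilon,\phi}(f)\in C^{2,\alpha}_{L,\delta}(\R^2)$, which solves $(-\Delta+\Gamma_{\varepsilon,\phi})u=f$ with
$$
   \|u\|_{C^{2,\alpha}_\delta(\R^2)}\leq c\,\|f\|_{C^{0,\alpha}_\delta(\R^2)}.
$$
Since the embedding $C^{2,\alpha}_{L,\delta}(\R^2)\hookrightarrow C^{0,\alpha}_{L,\delta}(\R^2)$ is continuous (the weight $\varphi_{\eps,\delta}$ is the same), I can feed $u$ back into $\tilde\Psi_{\varepsilon,\phi}$ and define
$$
   V:=\Psi_{\varepsilon,\phi}(f):=\tilde\Psi_{\varepsilon,\phi}\bigl(\tilde\Psi_{\varepsilon,\phi}(f)\bigr).
$$
By construction $(-\Delta+\Gamma_{\varepsilon,\phi})V=u$ and hence $(-\Delta+\Gamma_{\varepsilon,\phi})^2V=f$. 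The symmetries defining the space $C^{4,\alpha}_{L,\delta}(\R^2)$ are preserved by the operator $-\Delta+\Gamma_{\varepsilon,\phi}$ (cf. Remark \ref{rem_potential}), hence they are inherited by $V$ from $f$ through the two applications of $\tilde\Psi_{\varepsilon,\phi}$.

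For the regularity $V\in C^{4,\alpha}_{L,\delta}(\R^2)$ I would bootstrap via Schauder estimates: once we know $V\in C^{2,\alpha}_{L,\delta}(\R^2)$ from Lemma \ref{prop_linpb_far}, the equation $(-\Delta+\Gamma_{\varepsilon,\phi})V=u$ with right-hand side $u\in C^{2,\alpha}_{L,\delta}(\R^2)$ and $\Gamma_{\varepsilon,\phi}\in C^{4,\alpha}$ (Remark \ref{rem_potential}) gives by interior Schauder estimates on balls of radius $1$, applied uniformly in $x\in\R^2$, that $V\in C^{4,\alpha}(\R^2)$ with
$$
   \|V\varphi_{\eps,\delta}^{-1}\|_{C^{4,\alpha}(\R^2)}\leq c\bigl(\|V\varphi_{\eps,\delta}^{-1}\|_{L^\infty(\R^2)}+\|u\varphi_{\eps,\delta}^{-1}\|_{C^{2,\alpha}(\R^2)}\bigr)\leq c\,\|f\|_{C^{0,\alpha}_\delta(\R^2)},
$$
after absorbing the commutator terms coming from multiplying by $\varphi_{\eps,\delta}^{-1}$ (the point being that $\varphi_{\eps,\delta}^{-1}$ and its derivatives grow at most like $e^{\delta|x_2|}$, while the weight built into the norms exactly compensates this). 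Iterating the estimate from Lemma \ref{prop_linpb_far} twice yields the required bound $\|V\|_{C^{4,\alpha}_\delta(\R^2)}\leq c\,\|f\|_{C^{0,\alpha}_\delta(\R^2)}$ with constant independent of $\varepsilon,\phi$.

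For uniqueness, suppose $V\in C^{4,\alpha}_{L,\delta}(\R^2)$ solves $(-\Delta+\Gamma_{\varepsilon,\phi})^2V=0$ and set $w:=(-\Delta+\Gamma_{\varepsilon,\phi})V\in C^{2,\alpha}_{L,\delta}(\R^2)\subset C^{0,\alpha}_{L,\delta}(\R^2)$. Then $w$ solves $(-\Delta+\Gamma_{\varepsilon,\phi})w=0$, and the uniqueness part of Lemma \ref{prop_linpb_far} forces $w=0$; applying the same lemma once more yields $V=0$. The main (only) technical obstacle I anticipate is the bookkeeping needed to turn the interior Schauder estimate for $V$ into a weighted global estimate on $V\varphi_{\eps,\delta}^{-1}$, but this is routine since $\varphi_{\eps,\delta}$ satisfies $|\nabla^j\varphi_{\eps,\delta}|\leq c_j\varphi_{\eps,\delta}$ for every $j\geq 0$ uniformly in the relevant parameters.
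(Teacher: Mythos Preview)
Your proposal is correct and takes essentially the same approach as the paper: apply the second-order inverse $\tilde\Psi_{\varepsilon,\phi}$ from Lemma~\ref{prop_linpb_far} twice, then bootstrap to the weighted $C^{4,\alpha}$ estimate. The paper carries out the bootstrap by differentiating the equation $(-\Delta+\Gamma_{\varepsilon,\phi})V=u$ and reapplying Lemma~\ref{prop_linpb_far} to the derivatives $V_j$ and $V_{ij}$ (so the weighted estimate is inherited automatically), rather than by applying interior Schauder estimates directly to $V\varphi_{\eps,\delta}^{-1}$ as you do; the two variants are equivalent.
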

\begin{proof}
Given $f\in C^{0,\alpha}_{L,\delta}(\R^{2})$, we have to find $V\in C^{4,\alpha}_{L,\delta}(\R^{2})$ fulfilling
\begin{eqnarray}\notag
\begin{cases}
(-\Delta+\Gamma_{\varepsilon,\phi})^{2}V=f\\\notag
||V||_{C^{4,\alpha}_\delta(\R^{3})}\leq c||f||_{C^{0,\alpha}_\delta(\R^{2})}.
\end{cases}
\end{eqnarray}
In order to do so, we use Lemma \ref{prop_linpb_far} twice to find $u\in C^{2,\alpha}_{L,\delta}(\R^{2})$ and $V\in C^{2,\alpha}_{L,\delta}(\R^{2})$, such that
\begin{eqnarray}
\begin{cases}\notag
(-\Delta+\Gamma_{\varepsilon,\phi})u=f\\
(-\Delta+\Gamma_{\varepsilon,\phi})V=u,
\end{cases}
\end{eqnarray}
and 
\begin{eqnarray}\notag
\begin{cases}
||u||_{C^{2,\alpha}_\delta(\R^{2})}\leq c\,||f||_{C^{0,\alpha}_\delta(\R^{2})}\\\notag
||V||_{C^{2,\alpha}_\delta(\R^{2})}\leq c\,||u||_{C^{0,\alpha}_\delta(\R^{2})}.
\end{cases}
\end{eqnarray}
Now it remains to estimate the higher-order derivatives of $u$. For this purpose, we differentiate the equation satisfied by $u$ to get 
\begin{eqnarray}\notag
(-\Delta+\Gamma_{\varepsilon,\phi})V_{j}=u_{j}-(\Gamma_{\varepsilon,\phi})_{j}V
\end{eqnarray}
for $j=1,\dots,3$. By Proposition \ref{prop_linpb_far}, we can find a unique solution  $\tilde{V}\in C^{2,\alpha}_{L,\delta}(\R^{2})$ such that
\begin{eqnarray}\notag
||\tilde{V}||_{C^{2,\alpha}_\delta(\R^{2})}\leq c\,(||u_{j}||_{C^{0,\alpha}_\delta(\R^{2})}+||f||_{C^{0,\alpha}_\delta(\R^{2})})\leq c\,||f||_{C^{0,\alpha}_\delta(\R^{2})},
\end{eqnarray}
hence $\tilde{V}=V_j$.

Similarly, differentiating the equation once again, we see that 
\begin{eqnarray}\notag
(-\Delta+\Gamma_{\varepsilon,\phi})V_{ij}=u_{ij}-(\Gamma_{\varepsilon,\phi})_{i}V_{j}-(\Gamma_{\varepsilon,\phi})_{j}V_{i}
-(\Gamma_{\varepsilon,\phi})_{ij}V,
\end{eqnarray}
for $i,j=1,\dots,3$, so in particular $V_{ij}\in C^{4,\alpha}_{L,\delta}(\R^{3})$ and
\begin{eqnarray}\notag
||V_{ij}||_{C^{2,\alpha}_\delta(\R^{2})}\leq c\,(||u_{ij}||_{C^{0,\alpha}_\delta(\R^{2})}+||f||_{C^{0,\alpha}_\delta(\R^{2})})\leq c\,||f||_{C^{0,\alpha}_\delta(\R^{2})}.
\end{eqnarray}
This concludes the proof.
\end{proof}

\subsubsection{A fixed point argument}

Equation (\ref{eq_aux1}), whose resolvability is the purpose of this subsection, is equivalent to the fixed point problem
\begin{eqnarray}\notag
V=\mathcal{S}_{1}(V):=-\Psi_{\varepsilon,\phi}\bigg\{(1-\chi_{2})F(v_{\varepsilon,\phi})+(1-\chi_{1})Q_{\varepsilon,\phi}(\chi_{2}U+V)
+\text{N}_{\varepsilon,\phi}(U)+\text{P}_{\varepsilon,\phi}(V)\bigg\}.
\end{eqnarray}
We will solve it by showing that $\mathcal{S}_{1}$ is a contraction on the ball
\begin{eqnarray}\notag
\Lambda_{1}:=\{V\in C^{4,\alpha}_{L,\delta}(\R^{2}):||V||_{C^{4,\alpha}_\delta(\R^{2})}\leq C_{1}e^{-\delta/8\varepsilon}\},
\end{eqnarray}
provided the constant $C_{1}$ is large enough. This step of the proof is  similar to that in Section $6,2$ of \cite{Ri}. In order to prove existence, we have to show that $\mathcal{S}_1$ maps the ball into itself, provided the constant is large enough, and that it is Lipschitz continuous in $V$ with Lipschitz constant of order $\varepsilon$. The Lipschitz dependence on the data is proved exploiting the Lipschitz character of N$_{\eps,\phi}$, $Q_{\eps,\phi}$ and $\Gamma_{\eps,\phi}$ (see \eqref{quadratic_w}, \eqref{defN} and Remark \ref{rem_potential}) with respect to $U$ and $\phi$. More precisely, we use the fact that
\begin{eqnarray}\notag
||\text{N}_{\varepsilon,\phi}(U_{1})-\text{N}_{\varepsilon,\phi}(U_{2})||_{C^{0,\alpha}_\delta(\R^{2})}\leq c\,e^{-\delta/8\varepsilon}||U_{2}-U_{1}||_{D^{4,\alpha}_{\delta}(\R^2)},\\\notag
||\text{P}_{\varepsilon,\phi}(V_{\varepsilon,\phi,U_{1}})-\text{P}_{\varepsilon,\phi}(V_{\varepsilon,\phi,U_{2}})||_{C^{0,\alpha}_\delta(\R^{2})}\leq c\,e^{-\delta/8\varepsilon}||V_{\varepsilon,\phi,U_{1}}-V_{\varepsilon,\phi,U_{2}}||_{C^{4,\alpha}_\delta(\R^{2})},\\\notag
||\Gamma_{\eps,\phi_1}-\Gamma_{\eps,\phi_2}||_{C^{4,\alpha}_\delta(\R^2)}\leq c\,e^{-\delta/8\eps}||\phi_1-\phi_2||_{C^{4,\alpha}(\R)}.
\end{eqnarray}

\subsection{Proof of Proposition \ref{propaux_2}}

The aim of this section is to solve equation (\ref{proj_prob}).  Recall that we defined (see \eqref{eq:mcal-L})
\begin{eqnarray}\notag
\mathcal{L}:=-(\partial^2_s+\partial^2_t)+W^{''}(v_0(t)).
\end{eqnarray}
We first consider the linear problem
\begin{eqnarray}
\begin{cases}
\mathcal{L}^{2} \, U=f &\text{in }\R^2,\\
\int_{\R}U(s,t)v_{0}^{'}(t)dt=0 &\forall s\in\R,
\end{cases}
\label{eq_lin_near}
\end{eqnarray}
in order to produce a right inverse of $\mathcal{L}^2$ (see Subsection $5.1$). Then we apply this right inverse to define a contraction on a suitable small ball that will give us the solution through a fixed point argument.\\ 

We recall that we endowed the spaces $D^{n,\alpha}_{T,\delta}(\R^2)$ with the weighted norms
\begin{eqnarray}\notag
||U||_{D^{n,\alpha}_{\delta}(\R^2)}:=||U\psi_\delta||_{C^{n,\alpha}(\R^2)},
\end{eqnarray}
where $\psi_\delta$ is defined in (\ref{defpsi}).

\subsubsection{The linear problem}
As in Section \ref{s:red}, we first consider the second order problem
\begin{eqnarray}
\begin{cases}
\mathcal{L} \, U=f &\text{in }\R^2,\\
\int_{\R}U(y,t)v_{0}^{'}(t)dt=0 &\forall s\in\R.
\end{cases}
\label{eq_lin_near_2}
\end{eqnarray}
In order to get an estimate in a suitable weighted norm, we need an a priori estimate, that we will state in the next Lemma. This result is similar to Lemma $6.2$ in \cite{dPKW}, but here the situation is simpler since we just have exponential weights on the $t$-variable, while in \cite{dPKW} there is also a weight in the limit manifold, which is non-periodic. 

\begin{lemma}[A priori estimate]

Let $0<\delta<\sqrt{2}$, $f\in D^{0,\alpha}_{T,\delta}(\R^{2})$ and $U\in D^{2,\alpha}_{T,\delta}(\R^{2})$ be a solution to
\begin{eqnarray}\notag
\mathcal{L} \, U=f\\\notag
\int_{\R}U(s,t)v_{0}^{'}(t)dt=0, &\forall s\in\R,
\end{eqnarray}
satisfying ($L = L_T$, the $x_1$-period of $\gamma_{T}$)
\begin{eqnarray}\notag
U(x_1,x_2)=-U(-x_1,x_2)=-U\bigg(x_1+\frac{L}{2},-x_2\bigg), &\forall x=(x_1,x_2)\in\R^2,
\end{eqnarray}
and such that $U\psi_{\delta}\in L^{\infty}(\R^{2})$. Then $U\in D^{2,\alpha}_{T,\delta}(\R^{2})$ and
\begin{eqnarray}
||U||_{D^{2,\alpha}_{\delta}(\R^{2})}\leq c\,||f||_{D^{0,\alpha}_{\delta}(\R^{2})} 
\label{est_w_norm}
\end{eqnarray}
for some constant $c>0$ independent of $\varepsilon$.
\end{lemma}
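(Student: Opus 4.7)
The plan is a contradiction--compactness argument of blow-up type, analogous to Lemma 6.2 of \cite{dPKW} cited immediately before the statement, but simplified here by the fact that the only weight is in the $x_2$ variable (there is no weight along the limit curve). First I reduce the full $D^{2,\alpha}_{T,\delta}$ bound to a weighted $L^\infty$ bound: once I know $\|U\psi_\delta\|_{L^\infty(\R^2)}\le c\|f\|_{D^{0,\alpha}_\delta(\R^2)}$, standard interior Schauder estimates on unit balls, together with the slow variation $\psi_\delta(x)\simeq\psi_\delta(y)$ for $|x-y|\le 1$, will upgrade this to the required $C^{2,\alpha}$ bound in $D^{2,\alpha}_{\delta}$.

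To prove the $L^\infty$ bound by contradiction, suppose it fails. Then there exist sequences $U_n,f_n$ with the prescribed symmetries, satisfying $\mathcal L U_n=f_n$, $\int_\R U_n(s,t)v_0'(t)\,dt=0$, $\|U_n\psi_\delta\|_{L^\infty(\R^2)}=1$, and $\|f_n\|_{D^{0,\alpha}_\delta(\R^2)}\to 0$. Pick $x_n$ with $|U_n(x_n)\psi_\delta(x_n)|\ge 1/2$; by $L$-periodicity in $x_1$ we may assume $(x_n)_1$ bounded. Setting $\hat U_n(y):=\psi_\delta(x_n) U_n(y+x_n)$ and $\hat f_n(y):=\psi_\delta(x_n)f_n(y+x_n)$, one has $|\hat U_n(0)|\ge 1/2$, the locally uniform bound $|\hat U_n(y)|\le \psi_\delta(x_n)/\psi_\delta(y+x_n)$, and
\begin{equation*}
-\Delta \hat U_n + W''\bigl(v_0(y_2+(x_n)_2)\bigr)\hat U_n = \hat f_n,\qquad \hat f_n\to 0 \text{ in }L^\infty_{\mathrm{loc}}(\R^2).
\end{equation*}

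If $|(x_n)_2|\to\infty$, then $W''(v_0(\cdot+(x_n)_2))\to W''(\pm 1)=2$ locally uniformly, and Schauder plus diagonal extraction gives $\hat U_n\to V$ in $C^2_\mathrm{loc}(\R^2)$, where $V$ solves $-\Delta V+2V=0$ in $\R^2$, is $L$-periodic in $y_1$, and satisfies $|V(y)|\le Ce^{\pm\delta y_2}$ (sign dictated by that of $(x_n)_2$). Fourier-expanding $V$ in $y_1$, each mode $V_k(y_2)$ is a linear combination of $e^{\pm \sqrt{2+(2\pi k/L)^2}\, y_2}$, and since $\sqrt{2+(2\pi k/L)^2}\ge\sqrt{2}>\delta$ the growth bound forces every mode to vanish; hence $V\equiv 0$, contradicting $|V(0)|\ge 1/2$. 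In the remaining case $(x_n)_2$ is bounded, and the same procedure produces a limit $U_\infty$ solving $\mathcal L U_\infty=0$, $L$-periodic in $y_1$, satisfying $|U_\infty(y)|\le Ce^{\delta|y_2|}$, and inheriting (via the exponential decay of $v_0'$) the orthogonality $\int_\R U_\infty(s,t)v_0'(t)\,dt=0$. Writing $U_\infty(s,t)=\sum_k U_k(t)e^{2\pi i ks/L}$, each $U_k$ satisfies
\begin{equation*}
-U_k'' + \bigl(W''(v_0(t)) + (2\pi k/L)^2\bigr) U_k = 0.
\end{equation*}
For $k\ne 0$ the operator is a strictly positive shift of $-\partial_t^2+W''(v_0)$ and its fundamental solutions grow like $e^{\pm\sqrt{2+(2\pi k/L)^2}|t|}$, a rate strictly greater than $\delta$, so the growth bound forces $U_k\equiv 0$. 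For $k=0$, the bounded-by-$e^{\delta|t|}$ solutions of $-U_0''+W''(v_0)U_0=0$ form (since $\delta<\sqrt{2}$) the one-dimensional space spanned by $v_0'$, and the orthogonality condition then kills even this multiple. Thus $U_\infty\equiv 0$, a contradiction, and the estimate \eqref{est_w_norm} follows.

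The main obstacle is the bounded-$(x_n)_2$ case, where the orthogonality condition is essential to eliminate the one-dimensional kernel direction $v_0'$ of the transverse operator $-\partial_t^2+W''(v_0)$; without it, a non-trivial limit $U_\infty = c\, v_0'(t)$ would survive and the argument would collapse.
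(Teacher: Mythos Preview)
Your strategy---reduce to a weighted $L^\infty$ bound via Schauder, then contradict by blow-up and a Liouville classification---is exactly the paper's. The paper splits into three cases ($(s_n,t_n)$ bounded; $t_n$ unbounded; $t_n$ bounded but $s_n$ unbounded), invokes Lemma~6.1 of \cite{dPKW} for the Liouville step when $t_n$ is bounded, and in the $t_n$-unbounded case first multiplies by $\psi_\delta$ to obtain a drift equation with strictly positive zeroth-order coefficient $W''(1)-\delta^2$, then reaches a contradiction by evaluating at the interior maximum. Your Fourier-series treatment of both Liouville steps is a clean alternative when the period is fixed.

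There is one point to tighten. The constant $c$ is required to be \emph{independent of $\varepsilon$}, so the contradiction sequence must be allowed to carry varying $\varepsilon_n$; the paper makes this explicit by taking $\varepsilon_n\to 0$. In that regime the period $T_n=\bar T/\varepsilon_n\to\infty$, so your reduction ``by $L$-periodicity assume $(x_n)_1$ bounded'' no longer yields a bounded sequence, and the blow-up limits are entire solutions on $\R^2$ with no surviving periodicity. Your Fourier-series arguments (both the $-\Delta V+2V=0$ case and the $k\neq 0$ modes in the $\mathcal L U_\infty=0$ case) then do not apply as written. Your $k=0$ analysis---that solutions of $-U_0''+W''(v_0)U_0=0$ with growth $O(e^{\delta|t|})$, $\delta<\sqrt 2$, lie in $\mathrm{span}\{v_0'\}$---is precisely the Liouville input one needs, but eliminating the transverse dependence must be done without a discrete Fourier expansion; this is what the paper outsources to \cite{dPKW} in the bounded-$t_n$ case and handles by the maximum principle in the unbounded-$t_n$ case. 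Once you address this (or restrict to fixed $\varepsilon$, accepting $c=c(\varepsilon)$), your argument is complete.
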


\begin{proof}
As above, we set $\tilde{U}:=U\psi_{\delta}$ and $\tilde{f}:=f\psi_{\delta}$ (we recall that $\psi_\delta$ is a function of the $t$-variable). Since $\tilde{U}$ fulfils the equation
\begin{eqnarray}\notag
-\Delta\tilde{U}+2\psi_{-\delta}\partial_{t}\psi_{\delta}\partial_{t}\tilde{U}+
(W^{''}(v_{0}(t))-2(\psi_{-\delta}\partial_{t}\psi_{\delta})^{2}+\psi_{-\delta}\partial_{tt}\psi_{\delta})\tilde{U}=
\tilde{f},
\end{eqnarray}
where $\psi_{-\delta}:=\psi_\delta^{-1}$, then by elliptic estimates it is enough to show that
\begin{eqnarray}\notag
||U\psi_{\delta}||_{L^{\infty}(\R^{2})}\leq c \, ||f\psi_{\delta}||_{L^{\infty}(\R^{2})}.
\end{eqnarray}
We argue by contradiction, that is we suppose that there exists a sequence $\varepsilon_{n}\to 0$, $T_n:=\eps_n^{-1}$, $f_{n}\in D^{0,\alpha}_{T_{n},\delta}(\R^{2})$ such that $||f_{n}||_{D^{0,\alpha}_{\delta}(\R^{2})}\to 0$ and a sequence $U_{n}\in D^{2,\alpha}_{T_{n}}(\R^{2})$ of solutions to
\begin{eqnarray}\notag
\mathcal{L}\,U_{n}=f_{n} &\text{in $\R^{2};$}\\
\int_{\R}U_{n}(s,t)v_{0}^{'}(t)dt=0 &\forall s\in\R,\label{ortU_n}
\end{eqnarray}
such that $||U_{n}\psi_{\delta}||_{L^{\infty}(\R^{2})}=1$. In particular, there exists  $(s_{n},t_{n})\in\R^{2}$ such that $|U_{n}(s_{n},t_{n})|\psi_{\delta}(t_{n})\to 1$. We distinguish among three cases.\\

\textit{(i)} First we assume that $|s_{n}|+|t_{n}|$ is bounded. By the uniform bound on the norms, up to a subsequence, $U_{n}$ converges in the $C^{2}_{loc}(\R^{2})$ sense to a bounded $C^{2}(\R^{2})$-solution $U_{\infty}$ to
\begin{eqnarray}
-\Delta U_{\infty}+W^{''}(v_{0}(t))U_{\infty}=0 &\text{in $\R^{2}$}.
\label{eq_lim}
\end{eqnarray}
Hence, by Lemma $6,1$ in \cite{dPKW}, $U_{\infty}= \lambda \, v_{0}^{'}(t)$ for some $\lambda \in \R$. Moreover, by (\ref{ortU_n}),
\begin{eqnarray}
0=\int_{\R}U_{n}(s,t)v_{0}^{'}(t)dt\to\int_{\R}U_{\infty}(s,t)v_{0}^{'}(t)dt &\forall s\in\R,
\label{ort_lim}
\end{eqnarray}
thus $U_{\infty}\equiv 0$. However, up to a subsequence, $s_{n}\to s_{\infty}$ and $t_{n}\to t_{\infty}$, hence $|U_{n}(s_{n},t_{n})|\psi_{\delta}(t_{n})\to |U_{\infty}(s_{\infty},t_{\infty})|\psi_{\delta}(t_{\infty})=1$, a contradiction.\\
 
\textit{(ii)} Now we assume that $t_{n}$ is unbounded. We set
\begin{eqnarray}\notag
\tilde{U}_{n}(s,t):=U_{n}(s+s_{n},t+t_{n})\psi_{\delta}(t+t_{n}).
\end{eqnarray}
As above, exploiting the equation satisfied by $\tilde{U}_{n}$, the uniform $L^{\infty}$ bound of $U_{n}\psi_{\delta}$ and  elliptic estimates,  up to a subsequence, $\tilde{U}_{n}$ converges in  $C^{2}_{loc}(\R^{2})$  to a bounded solution $\tilde{U}_{\infty}$ to
\begin{eqnarray}\notag
-\Delta\tilde{U}_{\infty}+2\delta\partial_{t}\tilde{U}_{\infty}+(W^{''}(1)-\delta^{2})\tilde{U}_{\infty}=0.
\end{eqnarray}
By construction,
\begin{eqnarray}\notag
|\tilde{U}_{n}(0,0)|=|U_{n}(s_{n},t_{n})|\psi_{\delta}(t_{n})\to 1,
\end{eqnarray}
and $|\tilde{U}_{n}(s,t)|\leq 1$ for any $(s,t)\in\R^{2}$, thus $|\tilde{U}_{\infty}(0,0)|=1=\sup_{\R^{2}}|\tilde{U}_{\infty}|$. If, for instance, $\tilde{U}_{\infty}(0,0)=1$, then it is a maximum, hence
\begin{eqnarray}\notag
(W^{''}(1)-\delta^{2})=(W^{''}(1)-\delta^{2})\tilde{U}_{\infty}(0,0)\\\notag
\leq-\Delta\tilde{U}_{\infty}+2\delta\partial_{t}\tilde{U}_{\infty}+(W^{''}(1)-\delta^{2})\tilde{U}_{\infty}=0,
\end{eqnarray}
a contradiction. With a similar argument, we can also exclude the case $\tilde{U}_{\infty}(0,0)=-1$.\\

\textit{(iii)} It remains to rule out the case where $t_{n}$ is bounded and $s_{n}$ is unbounded. We define
\begin{eqnarray}\notag
\tilde{U}_{n}(s,t):=U_{n}(s+s_{n},t).
\end{eqnarray}
As above, we have convergence, up to a subsequence to a bounded $C^{2}$ solution to (\ref{eq_lim}). Since (\ref{ort_lim}) is still true, once again we conclude that $\tilde{U}_{\infty}\equiv 0$. Nevertheless, extracting a subsequence $t_{n}\to t_{\infty}$ if necessary, we have
\begin{eqnarray}\notag
|\tilde{U}_{n}(0,t_{n})|=|U_{n}(s_{n},t_{n})|\to\psi_{-\delta}(t_{\infty}),
\end{eqnarray}
hence $|\tilde{U}_{\infty}(0,t_{\infty})|=\psi_{-\delta}(t_{\infty})>0$, a contradiction.
\end{proof}

\begin{lemma}
Let $0<\delta<\sqrt{2}$, and let $f\in D^{0,\alpha}_{T,\delta}(\R^2)$ satisfy 
\begin{eqnarray}
\int_\R f(s,t)v_0^{'}(t)dt=0 &\forall s\in\R.
\label{ort_f}
\end{eqnarray}
Then, for $\varepsilon>0$ small enough, there exist a unique solution $U=\tilde{G}_{\varepsilon}(f)\in D^{2,\alpha}_{T,\delta}(\R^2)$ to (\ref{eq_lin_near_2}) such that 
\begin{eqnarray}\notag
||U||_{D^{2,\alpha}_{\delta}(\R^2)}\leq C||f||_{D^{0,\alpha}_{\delta}(\R^2)},
\end{eqnarray}
for some constant $C>0$ independent of $\varepsilon$ and $\phi$.
\label{prop_lin_near_2}
\end{lemma}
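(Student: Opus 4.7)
\begin{pfn} \emph{Plan.}
Uniqueness is immediate from the a priori estimate in the preceding lemma: if $\mathcal{L}U = 0$ with $U \in D^{2,\alpha}_{T,\delta}(\R^2)$ satisfying the orthogonality condition, then $\|U\|_{D^{2,\alpha}_\delta} \leq c \cdot 0 = 0$. For existence, I would reduce the problem to the strip $S = (-L/2, L/2) \times \R$ with periodic conditions in $s$ (compatible with the assumed symmetries) and apply Lax--Milgram on the closed Hilbert subspace
$$
  H_0 := \Big\{ U \in H^1(S) : \int_\R U(s,t)v_0'(t)\,dt = 0 \text{ for a.e.\ } s, \text{ and satisfying the required symmetries}\Big\}.
$$
The bilinear form is $B(U,V) := \int_S (\nabla U \cdot \nabla V + W''(v_0(t)) UV)\,ds\,dt$.

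The key point is coercivity of $B$ on $H_0$. Since $L_\star = -\partial_t^2 + W''(v_0)$ has $v_0'$ as a positive ground state (eigenvalue $0$), on $\{v_0'\}^\perp \subset L^2(\R)$ it has a spectral gap $\lambda_1 > 0$. Fiber-wise this gives $\langle L_\star U(s,\cdot), U(s,\cdot)\rangle_{L^2(\R)} \geq \lambda_1 \|U(s,\cdot)\|_{L^2(\R)}^2$ for $U \in H_0$, and combining with the bound $\int (\partial_t U)^2 \leq \langle L_\star U, U\rangle + C\int U^2$ yields $B(U,U) \geq c\|U\|_{H^1(S)}^2$. Lax--Milgram then produces a unique weak solution $U \in H_0$ to $B(U,V) = \int_S f V$ for all $V \in H_0$. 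To upgrade this to $\mathcal{L} U = f$ tested against arbitrary $V \in H^1(S)$, I would decompose $V = V_\perp + \alpha(s) v_0'(t)$ with $\alpha(s) := \|v_0'\|_{L^2}^{-2}\int_\R V(s,t) v_0'(t)\,dt$; using $L_\star v_0' = 0$ and the orthogonality condition on $U$ one checks that $B(U, \alpha v_0') = 0$, while the orthogonality hypothesis on $f$ gives $\int_S f \cdot \alpha v_0' = 0$, so the identity extends to all of $H^1(S)$. Standard Schauder bootstrap promotes $U$ to a classical $C^{2,\alpha}_{loc}$ solution, and periodic extension yields an entire solution on $\R^2$ with the prescribed symmetries.

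The main obstacle is to verify that this weak solution actually lies in the weighted space $D^{2,\alpha}_{T,\delta}(\R^2)$, so that the a priori estimate may be invoked. Having $U \in L^\infty$ (which follows from $H^1$-boundedness by elliptic regularity, periodicity in $s$, and the fact that $W''(v_0) \to 2$ at infinity), I would construct barriers of the form $\pm C e^{-\delta|t|}$ for $|t|$ large: since $W''(v_0(t)) - \delta^2 \to 2 - \delta^2 > 0$, a maximum principle comparison on strips $\{R < |t| < R'\}$, together with the exponential decay of $f \psi_{-\delta} \in L^\infty$, forces $|U(s,t)| \leq C e^{-\delta|t|}$, i.e.\ $U\psi_\delta \in L^\infty$. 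This argument mirrors Step (iv) in the proof of Lemma \ref{prop_linpb_far}.

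Once $U\psi_\delta \in L^\infty$ is established, the preceding a priori estimate lemma applies verbatim to give $\|U\|_{D^{2,\alpha}_\delta(\R^2)} \leq C\|f\|_{D^{0,\alpha}_\delta(\R^2)}$ with a constant independent of $\eps$. Setting $\tilde G_\eps(f) := U$ completes the proof.
\end{pfn}
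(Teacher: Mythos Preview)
Your proof is correct and follows essentially the same route as the paper: Lax--Milgram on the orthogonal subspace $H_0$ with coercivity coming from the spectral gap of $L_\star$ on $\{v_0'\}^\perp$, the decomposition $V=V_\perp+\alpha(s)v_0'$ to extend the weak identity to all of $H^1(S)$, and then a barrier argument (the paper uses $\lambda e^{-\delta|t|}+\sigma e^{\delta|t|}$ with $\sigma\to 0$, which is the standard way to make your comparison on $\{R<|t|<R'\}$ rigorous) to obtain $U\psi_\delta\in L^\infty$. One minor slip: since you are working in the Fermi coordinates $(s,t)$, the period in $s$ is $T$ (the arc-length period of $\gamma_T$), not $L$; the paper accordingly works on a strip of width $2T$ with Neumann conditions rather than $L$ with periodic ones, but this is inessential.
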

\begin{proof}
Exploiting the periodicity, first we look for a weak solution $Z\in H^{1}(S)$ to the problem
\begin{eqnarray}\notag
\begin{cases}
-\Delta Z+W^{''}(v_{0}(t))Z=f &\text{in $S$}; \\
\partial_{s}Z(-T,t)=\partial_{s}Z(T,t)=0 &\forall t\in\R; \\
\int_{\R}Z(s,t)v_{0}^{'}(t)dt=0 &\forall s\in(-T,T),
\end{cases}
\end{eqnarray}
where $S:=(-T,T)\times\R$, then we extend it to the whole $\R^2$. In other words, we look for a function $Z\in H^{1}(S)$ satisfying
\begin{eqnarray}\notag
\int_{S} \langle \nabla Z,\nabla v \rangle +\int_{S}W^{''}(v_{0}(t))Zv=\int_{S}fv & \forall v\in H^{1}(S),\\\notag
\int_{\R}Z(s,t)v_{0}^{'}(t)dt=0 &\text{a.e. $s\in(-T,T)$}.
\end{eqnarray}
Since
\begin{eqnarray}\notag
\int_{\R}(v^{'})^{2}+W^{''}(v_{0})v^{2}dt\geq c||v||^{2}_{H^{1}(\R)}, 
\end{eqnarray}
for any $v\in H^{1}(\R)$ such that
\begin{eqnarray}\notag
\int_{\R}vv_{0}^{'}dt=0,
\end{eqnarray}
the symmetric bilinear form defined by
\begin{eqnarray}\notag
b(Z,v):=\int_{S} \langle \nabla Z,\nabla v \rangle +\int_{S}W^{''}(v_{0}(t))Zv
\end{eqnarray}
is coercive on the closed subspace
\begin{eqnarray}\notag
X:=\bigg\{Z\in H^{1}(S):\int_{\R}Z(s,t)v_{0}^{'}(t)dt=0 &\text{a.e. $s\in(-T,T)$}\bigg\}.
\end{eqnarray}
Therefore, by the Lax-Milgram theorem, there exists a unique $Z\in X$ such that 
\begin{eqnarray}
b(Z,v)=\int_{S}fv &\forall v\in X.
\label{w_sol_X}
\end{eqnarray}
In order to show that $Z$ is actually a weak solution, we need to prove that (\ref{w_sol_X}) is true for any $v\in H^{1}(S)$. In order to do so, we decompose an arbitrary $v\in H^{1}(S)$ as
\begin{eqnarray}\notag
v(s,t)=\tilde{v}(s,t)+c(s)v_{0}^{'}(t),
\end{eqnarray}
where $c(s):=\int_{\R}vv_{0}^{'}dt/\int_{\R}(v_{0}^{'})^{2}dt$ is chosen in such a way that $\tilde{v}\in X$. We observe that, since 
\begin{eqnarray}\notag
\int_{S}f(s,t)v_{0}^{'}(t)dt=0, &\forall s\in(-T,T),
\end{eqnarray}
we have
\begin{eqnarray}\notag
\int_{-T}^{T}c(s)ds\int_{\R}f(s,t)v_{0}^{'}(t)dt=0.
\end{eqnarray}
Moreover, an integration by parts and Fubini-Tonelli's Theorem show that
\begin{eqnarray}\notag
b(Z,cv_{0}^{'})=\int_{-T}^{T}\partial_{s}c(s)\partial_{s}\int_{\R}Z(s,t)v_{0}^{'}(t)dt+
\int_{-T}^{T}c(s)\int_{\R}ZL_{\star}v_{0}^{'}dt=0.
\end{eqnarray}
In conclusion,
\begin{eqnarray}\notag
b(Z,v)=b(Z,\tilde{v})+b(Z,cv_{0}^{'})=\int_{S}f\tilde{v}dt=\int_{S}fv.
\end{eqnarray}
In order to prove symmetry and to extend $Z$ to an entire solution $U\in C^{2,\alpha}(\R^{2})$, see \textit{Step (ii)} of the proof of Lemma \ref{prop_linpb_far}. Arguing as in \textit{Step (iii)} of that proof, it is possible to show that $U\in L^{\infty}(\R^{2})$. In order to show that $U\psi_{\delta}\in L^{\infty}(\R^{2})$, we use the function $\lambda e^{-\delta |t|}+\sigma e^{\delta |t|}$ as a barrier, 
for suitable constant $\lambda$ and $\tau$. Here there is a slight difference with respect to the proof of Lemma \ref{prop_linpb_far}, due the fact that the potential is not uniformly positive. This is actually not so relevant, since $W^{''}(v_{0}(t))$ is close to $W^{''}(1) = 2$ for $|t|$ large enough.
\end{proof}
Now we can solve the fourth-order problem (\ref{eq_lin_near}), by applying iteratively Lemma \ref{prop_lin_near_2}.
\begin{lemma}
Let $0<\delta<\sqrt{2}$ and let $f\in D^{0,\alpha}_{T,\delta}(\R^2)$ satisfy (\ref{ort_f}). Then there exists a unique solution $U=G_{\varepsilon}(f)\in D^{4,\alpha}_{T,\delta}(\R^2)$ to (\ref{eq_lin_near}) such that 
\begin{eqnarray}\notag
||U||_{D^{4,\alpha}_{\delta}(\R^2)}\leq C||f||_{D^{0,\alpha}_{\delta}(\R^2)},
\end{eqnarray}
for some constant $C>0$ independent of $\varepsilon$.
\end{lemma}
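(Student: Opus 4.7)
The plan is to reduce the fourth-order problem to two successive applications of the second-order result in Lemma \ref{prop_lin_near_2}, exactly mirroring the strategy used in Lemma \ref{prop_linpb_far_4}. Given $f\in D^{0,\alpha}_{T,\delta}(\R^2)$ satisfying the orthogonality condition \eqref{ort_f}, I would first apply Lemma \ref{prop_lin_near_2} to produce $Z:=\tilde G_\eps(f)\in D^{2,\alpha}_{T,\delta}(\R^2)$ solving $\mathcal{L}Z=f$ with $\int_\R Z(s,t)v_0'(t)\,dt=0$ for every $s$, together with the bound $\|Z\|_{D^{2,\alpha}_\delta}\leq c\|f\|_{D^{0,\alpha}_\delta}$. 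Since $Z$ itself meets \eqref{ort_f}, I would then apply Lemma \ref{prop_lin_near_2} a second time to obtain $U\in D^{2,\alpha}_{T,\delta}(\R^2)$ with $\mathcal{L}U=Z$, the orthogonality $\int_\R U v_0'\,dt=0$, and $\|U\|_{D^{2,\alpha}_\delta}\leq c\|Z\|_{D^{0,\alpha}_\delta}\leq c^2\|f\|_{D^{0,\alpha}_\delta}$. By construction, $\mathcal{L}^2U=\mathcal{L}Z=f$, so $U$ is a solution of \eqref{eq_lin_near}.

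To upgrade to the full $D^{4,\alpha}_{T,\delta}$-estimate, I would use elliptic Schauder bootstrap applied to $\mathcal{L}U=Z$. Since $Z\in D^{2,\alpha}_{T,\delta}(\R^2)$, the usual Schauder estimates on unit balls (applied to the translated, weighted function $U\psi_\delta$ in the spirit of the computation in Lemma \ref{prop_linpb_far_4}) give $\|U\|_{D^{4,\alpha}_\delta(\R^2)}\leq c\,(\|U\|_{D^{2,\alpha}_\delta}+\|Z\|_{D^{2,\alpha}_\delta})\leq c\,\|f\|_{D^{0,\alpha}_\delta}$. This yields the desired estimate with the claimed constant independent of $\eps$; it is here that one uses the fact that the coefficient $W''(v_0(t))$ in $\mathcal{L}$ has bounded $C^{2,\alpha}$ norm and that the weight $\psi_\delta$ depends only on $t$, exactly as in Lemma \ref{prop_linpb_far_4}.

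For uniqueness, I would argue as follows. Suppose $U\in D^{4,\alpha}_{T,\delta}(\R^2)$ solves $\mathcal{L}^2U=0$ with $\int_\R U v_0'\,dt=0$ for every $s$. Setting $W:=\mathcal{L}U\in D^{2,\alpha}_{T,\delta}(\R^2)$, a direct computation using $L_\star v_0'=0$ and the orthogonality of $U$ gives
$$
  \int_\R W(s,t)v_0'(t)\,dt = -\partial_s^2\int_\R U(s,t)v_0'(t)\,dt + \int_\R U(s,t)L_\star v_0'(t)\,dt = 0.
$$
Thus $W$ satisfies the hypotheses of Lemma \ref{prop_lin_near_2} with zero right-hand side, so $W\equiv 0$, and a second application of the uniqueness statement of Lemma \ref{prop_lin_near_2} to $\mathcal{L}U=0$ with $\int U v_0'\,dt=0$ yields $U\equiv 0$. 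The main technical point, though essentially routine here, is checking that the Schauder bootstrap really does produce a constant that is uniform in $\eps$ despite $\R^2$ being non-compact; this is handled by the same contradiction/blow-up argument used in the a priori estimate preceding Lemma \ref{prop_lin_near_2}, since the weight $\psi_\delta$ is $\eps$-independent.
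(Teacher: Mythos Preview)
Your proposal is correct and follows precisely the route the paper indicates: the paper's own proof consists of the single remark that the result follows ``by applying iteratively Lemma~\ref{prop_lin_near_2}'', and your two-step application of $\tilde G_\eps$ together with the Schauder bootstrap and the uniqueness argument spells this out in full. The orthogonality check $\int_\R (\mathcal{L}U)v_0'\,dt=0$ you included is exactly what is needed to justify the second application of Lemma~\ref{prop_lin_near_2}.
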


\subsubsection{Proof of Proposition \ref{propaux_2} completed}

The proof is based on a fixed point argument. In fact, we have to find a fixed point of the map
\begin{equation}\label{eq:??}
\mathcal{S}_{2}(U):=G_{\varepsilon}\bigg\{-\chi_4 F(\tilde{v}_{\varepsilon,\phi})-\text{T}(U,V_{\varepsilon,\phi,U},\phi)+p_\phi(y)v_{0}^{'}(t)\bigg\}
\end{equation}
on a suitable small metric ball of the form
\begin{eqnarray}\notag
\Lambda_{2}:=\bigg\{U\in D^{4,\alpha}_{\delta}(\R^2):\int_\R U(s,t)v_0^{'}(t)dt=0 &\forall s\in\R,||U||_{D^{4,\alpha}_{\delta}(\R^2)}\leq C_{2}\varepsilon^5\bigg\},
\end{eqnarray}
provided $C_{2}>0$ is large enough. Once again, we will prove that $\mathcal{S}_2$ is a contraction $\Lambda_2$. First we observe that, by definition of $p_\phi$, the quantity inside brackets in \eqref{eq:??} is orthogonal to $v_{0}^{'}(t)$ for any $s\in\R$, thus we can actually apply the operator $G_{\varepsilon}$. Moreover, if $U$ respects the symmetries of the curve, then also the right-hand side does, hence $\mathcal{S}_{2}(U)$ respects the symmetries.

In order to prove that $\mathcal{S}_2$ is a contraction, we note that
\begin{eqnarray}\notag
||F(\tilde{v}_{\varepsilon,\phi})||_{D^{0,\alpha}_{\delta}(\R^2)}\leq \tilde{c}\,\varepsilon^5,
\end{eqnarray}
(see \eqref{error_eps5}), and a similar estimate is true for $p_\phi(s)v_{0}^{'}(t)$. The term $\text{T}(U,V_{\eps,\phi,U},\phi)$ defined in (\ref{defT}) is smaller. For instance, using (\ref{quadratic_w}) and the fact that $V$ is exponentially small, one has that 
\begin{eqnarray}\notag
||\chi_{1}Q_{\varepsilon,\phi}(U+V)||_{D^{0,\alpha}_{\delta}(\R^2)}\leq c\,\varepsilon^{10}.
\end{eqnarray}
Similarly, we can see that $||\text{M}_{\varepsilon,\phi}(V)||_{D^{0,\alpha}_{\delta}(\R^2)}\leq c\,e^{-\delta/4\varepsilon}$. In addition, since all the coefficients of $\text{R}_{\varepsilon,\phi}$ are at least of order $\varepsilon$, we get that
\begin{eqnarray}\notag
||\text{R}_{\varepsilon,\phi}(U)||_{D^{0,\alpha}_{\delta}(\R^2)}\leq c\,\eps||U||_{D^{4,\alpha}_{\delta}(\R^2)}\leq c\,\varepsilon^6.
\end{eqnarray}
For the definitions of $\text{M}_{\eps,\phi},R_{\eps,\phi}$ and $Q_{\eps,\phi}$, see (\ref{defM}), (\ref{defR}) and (\ref{quadratic_w}).

As regards the Lipschitz dependence on $U$, we observe that 
\begin{eqnarray}\notag
||\chi_{1}(Q_{\varepsilon,\phi}(U_{1}+V)-Q_{\varepsilon,\phi}(U_{2}+V))||_{D^{0,\alpha}_{\delta}(\R^2)}\leq c\,\varepsilon^{5}||U_{1}-U_{2}||_{D^{4,\alpha}_{\delta}(\R^2)}
\end{eqnarray}
and
\begin{eqnarray}\notag
||\text{R}_{\varepsilon,\phi}(U_{1})-\text{R}_{\varepsilon,\phi}(U_{2})||_{D^{0,\alpha}_{\delta}(\R^2)}\leq c\,\varepsilon||U_{1}-U_{2}||_{D^{4,\alpha}_{\delta}(\R^2)}.
\end{eqnarray}
It follows from the Lipschitz character of the potential $W$ that the solution $U$ depends on $\phi$ in a Lipschitz way.


\section{Appendix}\label{s:app}
In this section we provide a full proof of our claims from
Proposition~\ref{p:inv-rhs}, and in particular of \eqref{eq:c0c1}. To this end, we consider the solutions $\Phi(z) = \sum_{m=0}^\infty \mu_m z^{2m+1}$
of the ODE \eqref{eq:ODE_for_Phi}. A coefficient comparison yields that  $\mu_2,\mu_3,\ldots$ are
explicitly given in terms of $\mu_0,\mu_1$ via the formulas 
\begin{align} \label{eq:defn_cm}
  \begin{aligned}
  \mu_{2m} &= - \frac{3\pi \sqrt{2}}{32\Gamma(\frac{3}{4})^2} \cdot \frac{ 
      \Gamma(m-\frac{1}{4})}{4^m\Gamma(m+\frac{5}{4})} -  \frac{\mu_0}{\sqrt{\pi}}  \cdot   
  	  \frac{\Gamma(m+\frac{1}{2})}{4^m\Gamma(m+1)(4m-1)},  \\
      \mu_{2m+1} &= \frac{3\sqrt{2}\Gamma(\frac{3}{4})^2 }{8\pi} \cdot \frac{ 
  	  \Gamma(m+\frac{1}{4})}{4^m\Gamma(m+\frac{7}{4})} \, \mu_1.
  \end{aligned}
\end{align}
By the asymptotics of the Gamma function, see e.g. \cite{ET}, the convergence radius of this series is $\sqrt{2}$.   Reasoning as in Proposition~\ref{p:inv-rhs}
let us now derive two  equations for $\mu_0,\mu_1$ so that any corresponding solution
$\bar\phi=\Phi\circ k$ satisfies $\bar\phi'(s),\bar\phi'''(s)\to 0$ as $|k(s)|\to \sqrt{2}$, i.e. as $|s|\to
{\bar{T}}/4$. It will turn out that the solution of this system is unique and given by
$\mu_0=0,\mu_1= \frac{\pi^2}{8\Gamma(\frac{3}{4})^4}$,  in accordance with \eqref{eq:c0c1}. 

\

\begin{pfn} {\em of \eqref{eq:c0c1}}
To this end we first calculate the derivatives of $\bar{\phi}$. For all $m\in\N_0$ we set
\begin{align*}
  a_m &:= (2m+1)\mu_m, \\ 
  b_m &:= -\frac{1}{2}(2m+3)(2m+1)(m+1)\mu_m + 2(2m+5)(m+2)(2m+3)\mu_{m+2}.  
\end{align*} 
Then, for all $s\in (-{\bar{T}}/4,{\bar{T}}/4)$ we have $|k(s)|<\sqrt{2}$ and thus we obtain from \eqref{eq:Will} and
\eqref{eq:cons}
\begin{align*}
  \bar\phi'(s)
  &= \sum_{m=0}^\infty a_m k'(s)k(s)^{2m}, \\
  \bar\phi''(s)
  &= \sum_{m=1}^\infty a_m \big( k''(s)k(s)^{2m}+2m k'(s)^2k(s)^{2m-1} \big) + a_0k''(s)\\ 
  &= \sum_{m=1}^\infty a_m \Big(
  -\frac{1}{2}k(s)^{2m+3}+2mk(s)^{2m-1}(1-\frac{1}{4}k(s)^4)\Big) - \frac{a_0}{2}k(s)^3 \\
  &=  \sum_{m=0}^\infty  \Big(- \frac{1}{2}(m+1)a_m + 2(m+2)a_{m+2} \Big) k(s)^{2m+3} 
   + 2a_1k(s)  \\
  \bar\phi'''(s)
  &= \sum_{m=0}^\infty b_m k'(s)k(s)^{2m+2} +  2a_1k'(s). 
\end{align*}
 
\medskip
 
For the  analysis of convergence,  we rewrite $\bar\phi',\bar\phi'''$ as follows:
\begin{align*}
  \bar\phi'(s)
  &= k'(s)\cdot \sum_{m=0}^\infty \Big( a_{2m}+2a_{2m+1} + a_{2m+1}(k(s)^2-2)\Big)k(s)^{4m}, \\
  \bar\phi'''(s)
  &= k'(s)k(s)^2\cdot \sum_{m=0}^\infty \Big( b_{2m}+2b_{2m+1} + b_{2m+1}(k(s)^2-2)\Big)k(s)^{4m}  + 
  2a_1k'(s).
\end{align*}
Therefore we have to investigate the behaviour of the terms
$a_{2m}+2a_{2m+1},b_{2m}+2b_{2m+1},a_{2m+1},b_{2m+1}$ as $m\to\infty$. To this end we use the known
asymptotics (see p.1 in \cite{ET})
\begin{equation} \label{eq:Gamma_asymptotics}
  \frac{\Gamma(z+\alpha)}{\Gamma(z+\beta)} 
  = z^{\alpha-\beta}\cdot \Big( 1 + \frac{(\alpha-\beta)(\alpha+\beta-1)}{2z} + O(z^{-2})\Big)
  \quad\text{as }z\to\infty   
\end{equation}
for any fixed $\alpha,\beta\in\R$. 

\medskip

We start with analysing the behaviour of $\bar\phi'(s)$ as $|s|\to {\bar{T}}/4$. We have  
\begin{align*}
  a_{2m}+2a_{2m+1}
  &= (4m+1)\mu_{2m} + 2(4m+3)\mu_{2m+1} \\
  &= (4m+1)\cdot \Big(  
  - \frac{3\pi \sqrt{2}}{32\Gamma(\frac{3}{4})^2} \cdot \frac{ 
      \Gamma(m-\frac{1}{4})}{4^m\Gamma(m+\frac{5}{4})} -  \frac{\mu_0}{\sqrt{\pi}}  \cdot   
  	  \frac{\Gamma(m+\frac{1}{2})}{4^m\Gamma(m+1)(4m-1)}\Big) \\
  &\quad + 2(4m+3)\cdot \frac{3\sqrt{2}\Gamma(\frac{3}{4})^2 \mu_1}{8\pi} \cdot \frac{ 
  	  \Gamma(m+\frac{1}{4})}{4^m\Gamma(m+\frac{7}{4})} \\
  &= \frac{1}{4^m} \cdot \Big( - \frac{3\pi \sqrt{2}}{8\Gamma(\frac{3}{4})^2} \cdot \frac{ 
      \Gamma(m-\frac{1}{4})}{\Gamma(m+\frac{1}{4})} -  \frac{\mu_0}{\sqrt{\pi}}  \cdot   
  	  \frac{\Gamma(m+\frac{1}{2})(4m+1)}{\Gamma(m+1)(4m-1)} \\
  &\quad +  \frac{3\sqrt{2}\Gamma(\frac{3}{4})^2 \mu_1}{\pi} \cdot \frac{ 
  	  \Gamma(m+\frac{1}{4})}{ \Gamma(m+\frac{3}{4})}\Big) \\
  &= \frac{1}{4^m m^{1/2}} \cdot  \Big(- \frac{3\pi \sqrt{2}}{8\Gamma(\frac{3}{4})^2} - 
  \frac{1}{\sqrt{\pi}}\cdot \mu_0   + \frac{3\sqrt{2}\Gamma(\frac{3}{4})^2 }{\pi}\cdot \mu_1 + O(m^{-1})\Big), \\
  a_{2m+1}
  &= \frac{3\sqrt{2}\Gamma(\frac{3}{4})^2\mu_1}{2\pi}\cdot
  \frac{\Gamma(m+\frac{1}{4})}{4^m\Gamma(m+\frac{3}{4})} = \frac{1}{4^m}\cdot O(m^{-1/2}). 
\end{align*}
Therefore we must require
\begin{equation} \label{eq:req1}
  - \frac{3\pi \sqrt{2}}{8\Gamma(\frac{3}{4})^2} -  \frac{1}{\sqrt{\pi}}\cdot \mu_0   + \frac{3\sqrt{2}\Gamma(\frac{3}{4})^2 }{\pi}\cdot \mu_1
  = 0 
\end{equation}
Once this equation is satisfied we have for some positive $C,C'$ as $|s|\to {\bar{T}}/4$
\begin{align*}
  |\bar\phi'(s)|
  &\leq |k'(s)|\cdot \sum_{m=0}^\infty \Big( 4^m|a_{2m}+2a_{2m+1}| + 4^m|a_{2m+1}||k(s)^2-2|\Big)
  \Big(\frac{k(s)^4}{4}\Big)^m  \\
  &\leq  c|k'(s)| \cdot \Big( \sum_{m=0}^\infty m^{-3/2} + 
  \sum_{m=0}^\infty |k(s)^2-2|\Big(\frac{k(s)^4}{4}\Big)^m \Big) \\
  &\leq  c'|k'(s)| \cdot \Big( 1 + |k(s)^2-2|\cdot \frac{1}{1-\frac{k(s)^4}{4}}  \Big) 
  = o(1), 
\end{align*} 
so  the desired asymptotics for $\bar\phi'$ holds. Hence, we have shown that any couple $\mu_0,\mu_1$ satisfying
\eqref{eq:req1} yields the convergence of   $\bar\phi'(s)$ as $s \to \bar{T}/4$.

\medskip

Now we turn to the third-order derivatives. We have
\begin{align*}
  b_{2m}
  &= - \frac{1}{2}(4m+3)(4m+1)(2m+1)\mu_{2m} + 2(4m+5)(2m+2)(4m+3)\mu_{2m+2} \\
  &=  \frac{3\pi\sqrt{2}}{64\Gamma(\frac{3}{4})^2}\cdot 
    \frac{(4m+3)(4m+1)(2m+1)\Gamma(m-\frac{1}{4})}{4^m\Gamma(m+\frac{5}{4})}  \\
  &\quad - \frac{3\pi\sqrt{2}}{16\Gamma(\frac{3}{4})^2} \cdot
      \frac{(4m+5)(2m+2)(4m+3)\Gamma(m+\frac{3}{4})}{4^{m+1}\Gamma(m+\frac{9}{4})}  \\
  &\quad + \frac{\mu_0}{2\sqrt\pi}\cdot  
  \frac{(4m+3)(4m+1)(2m+1)\Gamma(m+\frac{1}{2})}{4^m\Gamma(m+1)(4m-1)} \\ 
  &\quad -  \frac{2\mu_0}{ \sqrt\pi}\cdot 
  \frac{ (4m+5)(2m+2)(4m+3)\Gamma(m+\frac{3}{2})}{4^{m+1}\Gamma(m+2)(4m+3)}
    \\
  &=    \frac{3\pi\sqrt{2}}{16\Gamma(\frac{3}{4})^2}\cdot \frac{1}{4^m} \Big(
    \frac{(4m+3)(2m+1)\Gamma(m-\frac{1}{4})}{\Gamma(m+\frac{1}{4})}  
    -  \frac{(4m+3)(2m+2)\Gamma(m+\frac{3}{4})}{\Gamma(m+\frac{5}{4})}    
    \Big) \\
  &\quad + \frac{\mu_0}{2\sqrt\pi}\cdot \frac{1}{4^m}  
  \Big( 
    \frac{(4m+3)(4m+1)(2m+1)\Gamma(m+\frac{1}{2})}{(4m-1)\Gamma(m+1)}  
    -  \frac{(4m+5)(2m+2)\Gamma(m+\frac{3}{2})}{\Gamma(m+2)}    
    \Big), 
\end{align*}
as well as 
\begin{align*}
  b_{2m+1}
  &=   - \frac{1}{2}(4m+5)(4m+3)(2m+2) \mu_{2m+1} + 2(4m+7)(2m+3)(4m+5)\mu_{2m+3} \\
  &=  -\frac{3\sqrt{2}\Gamma(\frac{3}{4})^2\mu_1}{16\pi} \cdot 
  \frac{(4m+5)(4m+3)(2m+2)\Gamma(m+\frac{1}{4})}{4^m \Gamma(m+\frac{7}{4})}  \\
   &\quad + \frac{3\sqrt{2}\Gamma(\frac{3}{4})^2\mu_1}{4\pi}
   \cdot \frac{4(4m+7)(2m+3)(4m+5)\Gamma(m+\frac{5}{4})}{4^{m+1}\Gamma(m+\frac{11}{4})}  
  \\
  &=   \frac{3\sqrt{2}\Gamma(\frac{3}{4})^2\mu_1}{4\pi} \cdot \frac{1}{4^m}
  \Big( - \frac{(4m+5)(2m+2)\Gamma(m+\frac{1}{4})}{\Gamma(m+\frac{3}{4})}  
    + \frac{(2m+3)(4m+5)\Gamma(m+\frac{5}{4})}{\Gamma(m+\frac{7}{4})} \Big). 
\end{align*}
Using the asymptotics of the Gamma function, from \eqref{eq:Gamma_asymptotics} we obtain 
\begin{align*}
  b_{2m}+2b_{2m+1}
  &=  \frac{1}{4^m m^{1/2}} \cdot \Big( \frac{9\pi\sqrt{2}}{16\Gamma(\frac{3}{4})^2}   +
  \frac{2}{\sqrt{\pi}}\cdot \mu_0 - \frac{9\sqrt{2}\Gamma(\frac{3}{4})^2}{2\pi}\cdot \mu_1 + O(m^{-1})\Big), \\
  b_{2m+1}
  &=   \frac{1}{4^m}\cdot O(m^{-1/2}). 
\end{align*}  
 This leads us to require 
 \begin{equation} \label{eq:req2}
  \frac{9\pi\sqrt{2}}{16\Gamma(\frac{3}{4})^2}   +
  \frac{2}{\sqrt{\pi}}\cdot \mu_0 - \frac{9\sqrt{2}\Gamma(\frac{3}{4})^2}{2\pi}\cdot \mu_1
  = 0. 
\end{equation}
 As before we obtain that every $\mu_0,\mu_1$ satisfying \eqref{eq:req2} makes sure that 
 $\bar\phi'''(s)$ tends to zero as $s \to \bar{T}/4$, i.e. as  $|k(s)|\to \sqrt{2}$. 
 
 \medskip
 
 Collecting all the above reasoning, we find that  $\bar\phi'(s),\bar\phi'''(s)\to 0$ as $|k(s)|\to\sqrt{2}$ provided
 $\mu_0,\mu_1$ solve \eqref{eq:req1},\eqref{eq:req2}, which is a linear system with a unique solution $\mu_0=0,\mu_1 = 
 \frac{\pi^2}{8\Gamma(\frac{3}{4})^4}$. Plugging these values into the formula for $\bar\phi(s)=\Phi(k(s))$ we
 get
 \begin{align*}
      \bar\phi(s) 
      &=   \frac{3\pi\sqrt{2}}{64\Gamma(\frac{3}{4})^2}\cdot
      \sum_{m=0}^\infty \Big( -  \frac{\Gamma(m+\frac{3}{4})}{2\cdot 4^m\Gamma(m+\frac{9}{4})}  
      k(s)^{4m+5} + \frac{ \Gamma(m+\frac{1}{4})}{4^m\Gamma(m+\frac{7}{4})} k(s)^{4m+3} \Big),
	\end{align*}
which is precisely the formula from Proposition \ref{p:inv-rhs}. 
\end{pfn}

\end{document}